\newtheorem{theorem}{Theorem}[section]
\newtheorem{lemma}[theorem]{Lemma}
\newtheorem{proposition}[theorem]{Proposition}
\newtheorem{definition}[theorem]{Definition}
\newtheorem{assumption}[theorem]{Assumption}
\theoremstyle{remark}
\numberwithin{equation}{section}
\def\section{\@startsection{section}{1}%
  \z@{1.5\linespacing\@plus\linespacing}{.5\linespacing}%
  {\normalfont\bfseries\large\centering}}
\newcommand{\be}{\begin{equation}}
\newcommand{\ee}{\end{equation}}
\newcommand{\bea}{\begin{eqnarray}}
\newcommand{\eea}{\end{eqnarray}}
\newcommand{\bee}{\begin{eqnarray*}}
\newcommand{\eee}{\end{eqnarray*}}
\def\pa{\partial}
\def\na{\nabla}
\def\NN{\mathbb{N}}
\def\RR{\mathbb{R}}
\def\SS{\mathbb{S}}
\def\ZZ{\mathbb{Z}}
\def\ds{\displaystyle}
\def\ni{\noindent}
\def\bs{\bigskip}
\def\ms{\medskip}
\def\eps{\varepsilon}
\def\fref#1{{\rm (\ref{#1})}}
\def\pref#1{{\rm \ref{#1}}}
\def\intquad {\int\mbox{\hspace{-3mm}}\int{\hspace{-3mm}}\int{\hspace{-3mm}}\int}
\def\calC{{\mathcal C}}
\def\calE{{\mathcal E}}
\def\calB{{\mathcal B}}
\def\GG{\mathbb G}
\def\lnar{\langle \na_\sigma\rangle}
\def\Vc{V_{\rm c}}
\def\supess{\mathop{\operator@font Sup\,ess}}
\def\un{{\mathbbmss{1}}}
\title{The Schr\"odinger-Poisson system on the sphere}
\author{Patrick G\'erard}
\address{Universit\'e Paris-Sud 11, Laboratoire de Math\'ematiques d'Orsay, CNRS, UMR 8628, et Institut Universitaire de France}
\email{patrick.gerard@math.u-psud.fr}
\author{Florian M\'ehats}
\address{IRMAR, Universit\'e Rennes 1, France}
\email{florian.mehats@univ-rennes1.fr}
\urladdr{http://perso.univ-rennes1.fr/florian.mehats/}
\dedicatory{In memory of Naoufel Ben Abdallah,  with our friendship}
\begin{document}

\begin{abstract}
We study the Schr\"odinger-Poisson system on the unit sphere $\SS^2$ of $\RR^3$, modeling the quantum transport of charged particles confined on a sphere by an external potential. Our first results concern the Cauchy problem for this system. We prove that this problem is regularly well-posed on every $H^s(\SS ^2)$ with $s>0$, and not uniformly well-posed on $L^2(\SS ^2)$. The proof of well-posedness relies on multilinear Strichartz estimates, the proof of ill-posedness relies on the construction of a counterexample which concentrates exponentially on a closed geodesic. In a second part of the paper, we prove that this model can be obtained as the limit of the three dimensional Schr\"odinger-Poisson system, singularly perturbed by an external potential that confines the particles in the vicinity of the sphere.
\end{abstract}


\maketitle


\section{Introduction and main results}
\sloppy 

Let $\SS^2\subset\RR^3$ be the unit sphere. For functions defined on $\SS^2$, one considers the operator $G$ defined by
\begin{equation}
  \label{G}
  G(f)(x)=\frac{1}{4\pi}\int_{\SS^2} \frac{1}{|x-y|}f(y)d\sigma(y),
\end{equation}
where $\sigma$ denote the surface measure on $\SS^2$ and $|\cdot|$ is the Euclidean norm on $\RR^3$.

In this paper, we are interested in the following Schr\"odinger-Poisson system on $\SS^2$:
\begin{equation}
  \label{sp}
  i\pa_t u +\Delta_\sigma u=G(|u|^2)u,\qquad u(t=0)=u_0.
\end{equation}
Here $\Delta_\sigma$ denotes the Laplace-Beltrami operator on $\SS^2$. This system models the transport of a gas of quantum charged particles confined on a surface, here the sphere, and interacting through the Poisson potential, which is the Coulombian interaction in the Hartree approximation. Our purpose in studying this ideal system is twofold. 

First, we are interested in understanding the wellposedness of the Cauchy problem for \fref{sp}: choice of the function space, local in time or global in time solutions, stability of the flow map, \ldots From this point of view, this paper comes in the continuity of several recent works concerning the nonlinear Schr\"odinger equation on Riemannian manifolds or in inhomogeneous media \cite{BGT1, BGT2, BGT3, BGT4, BGT5, gerard, GP}.

Second, we wish to justify this system for modeling a quantum gas via some asymptotic analysis, starting from a more conventional 3D Schr\"odinger-Poisson system with a singular perturbation which stands for a strong confinement potential. Strongly confined Schr\"odinger-Poisson systems have previously been studied in Euclidean spaces: in \cite{bmp, magnetic} for the confinement on a plane and in \cite{bcfm} for the confinement on an axis. The idea here is to investigate the influence of the geometry on the confinement procedure. The case of the sphere can be seen as a step before more general manifolds, which can be interesting in some applications in the field of nanoelectronics. Quantum dynamical systems confined on a surface have been studied previously in \cite{dc,fh,teufel} in linear situations. Starting from a similar scaling on the transverse Hamiltonian, these authors consider the linear Schr\"odinger equation with a confinement on a general surface and derive an effective Hamiltonian which locally depends on the curvature properties of the surface. Here our approach is mainly concentrated on understanding the nonlinear effects.

Our first result states that this problem is locally well-posed in $H^s$, $s>0$ and globally well-posed in the energy space:

\begin{theorem}
\label{theo1}
Let $s>0$ a real number. For every bounded subset $\mathcal B \subset H^s(\SS^2)$, there exists $T\in (0,+\infty]$ and a  subspace $X_T$ of $\calC((-T,T),H^s(\SS^2)$ such that, for $u_0\in \mathcal B$, the Cauchy problem \fref{sp} admits a unique solution $u\in X_T$. For all $0<T'<T$, the application $u_0\mapsto u\in C([-T',T'],H^s(\SS^2))$ is Lipschitz continuous on $\mathcal B$. Moreover, if $s\geq 1$ one can choose $T=+\infty$ and this global solution satisfies the following two conservation laws:
\begin{equation}
  \label{conslaw}
  \|u(t)\|_{L^2}=Q_0\,,\qquad \|\na_\sigma u(t)\|_{L^2}^2+\frac 1 2\int_{\SS^2}G(|u|^2)|u|^2d\sigma=E_0\,.
\end{equation}
\end{theorem}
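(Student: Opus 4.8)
The plan is to set up a contraction–mapping argument for the Duhamel formulation of \fref{sp} in a resolution space $X_T$ built from Bourgain-type spaces adapted to the group $e^{it\Delta_\sigma}$ on $\SS^2$, the crucial gain being that the Poisson operator $G$ is smoothing of order one, which compensates the loss of dispersion on the sphere; the conservation laws and global existence for $s\ge 1$ will then follow from the positivity of $G$. First I would record the spectral description of $G$: expanding the Newtonian kernel in Legendre polynomials, $\frac{1}{|x-y|}=\sum_{n\ge 0}P_n(x\cdot y)$ for $x,y\in\SS^2$, and invoking the addition theorem for spherical harmonics, one sees that $G$ acts on the eigenspace of $-\Delta_\sigma$ for the eigenvalue $n(n+1)$ by the scalar $(2n+1)^{-1}$; equivalently $G=\tfrac12(\tfrac14-\Delta_\sigma)^{-1/2}$. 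Thus $G$ is a positive, self-adjoint, classical pseudodifferential operator of order $-1$: it maps $H^\sigma(\SS^2)$ into $H^{\sigma+1}(\SS^2)$ for every $\sigma$, maps $L^p$ into $L^q$ with $\tfrac1q=(\tfrac1p-\tfrac12)_+$ (in particular into $L^\infty$ for $p>2$) and into $W^{1,p}$ for $1<p<\infty$, and satisfies $\langle Gf,f\rangle\ge 0$. This positivity is what will make the energy in \fref{conslaw} coercive on $H^1$.

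I would then work in $X^{s,b}_T$, the restriction to $(-T,T)$ of the Bourgain space built on $e^{it\Delta_\sigma}$ with $b>1/2$ close to $1/2$, so that $X^{s,b}_T\hookrightarrow\calC((-T,T),H^s(\SS^2))$. The core of the argument is the trilinear estimate: for every $s>0$ there exist $b>1/2$ and $\theta>0$ with
\[
\bigl\|G(u_1\overline{u_2})\,u_3\bigr\|_{X^{s,b-1}_T}\le C\,T^{\theta}\,\|u_1\|_{X^{s,b}_T}\,\|u_2\|_{X^{s,b}_T}\,\|u_3\|_{X^{s,b}_T},
\]
together with the variants obtained by moving the complex conjugates. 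Decomposing each factor into dyadic frequency blocks $\Pi_{N_j}u_j$ and using duality, this reduces to bounding the space–time integral of $\Pi_{N_1}u_1\,\overline{\Pi_{N_2}u_2}\,G(\Pi_{N_3}u_3\,\overline{\Pi_{N_4}u_4})$; transferring $G$ onto the other pair by self-adjointness, applying H\"older in time and space, and controlling the two resulting bilinear factors by the bilinear Strichartz estimates of Burq--G\'erard--Tzvetkov on $\SS^2$, one obtains a bound carrying enough powers of the frequencies to sum the dyadic pieces down to $s>0$ and to produce a positive power $T^\theta$ (since $b-1<-\tfrac12<b$). The smoothing of $G$ supplies decay in the output frequency of $\Pi_{N_3}u_3\,\overline{\Pi_{N_4}u_4}$; in the high–high interaction $N_3\sim N_4$ that output can be low-frequency and $G$ gains essentially nothing, so there the decay must be extracted from the bilinear estimate alone. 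This is the delicate regime and, I expect, the main obstacle of the whole proof — it is also presumably where the argument just barely fails at $s=0$, consistent with the ill-posedness at $L^2$.

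With the trilinear estimate in hand, the map $u\mapsto e^{it\Delta_\sigma}u_0-i\int_0^t e^{i(t-s)\Delta_\sigma}\bigl[G(|u|^2)u\bigr]\,ds$ is, for $T=T(\|u_0\|_{H^s})>0$ small, a contraction on a ball of $X^{s,b}_T$; since $T$ may be taken uniform over the bounded set $\mathcal B$, this yields some $T\in(0,+\infty]$ and, for each $u_0\in\mathcal B$, a unique solution $u\in X_T:=X^{s,b}_T$. Because the estimate is multilinear, the very same computation bounds the difference of two solutions issued from data in $\mathcal B$, which gives the Lipschitz continuity of $u_0\mapsto u\in\calC([-T',T'],H^s(\SS^2))$ on $\mathcal B$ for every $0<T'<T$.

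Finally, for $s\ge 1$: pairing \fref{sp} with $\bar u$ and taking imaginary parts gives $\frac{d}{dt}\|u\|_{L^2}^2=0$ for smooth solutions (the term $\int_{\SS^2}G(|u|^2)|u|^2\,d\sigma$ is real), and pairing with $\partial_t\bar u$, taking real parts and using $G=G^*$ gives $\frac{d}{dt}\bigl(\|\na_\sigma u\|_{L^2}^2+\tfrac12\int_{\SS^2}G(|u|^2)|u|^2\,d\sigma\bigr)=0$; by the Lipschitz continuity of the flow both identities pass to $H^1$ solutions, proving \fref{conslaw}. Since $G\ge 0$, the energy law forces $\|\na_\sigma u(t)\|_{L^2}^2\le E_0$, hence $\sup_t\|u(t)\|_{H^1}\le C(Q_0,E_0)<\infty$; as the local existence time depends only on the $H^1$ norm, the $H^1$ solution is global, so one may take $T=+\infty$. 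For $s>1$ one concludes by persistence of regularity: the $H^s$ norm cannot blow up in finite time — a standard consequence of the equation, the $H^1$ a priori bound, and the order $-1$ smoothing of $G$ (which, with the Strichartz estimates, tames the top-order contribution of the cubic nonlinearity) — so the solution is global in $H^s$ as well.
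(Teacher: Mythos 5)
Your overall architecture (Duhamel formulation, Bourgain spaces $X^{s,b}$, a multilinear estimate closing a contraction, then the standard conservation-law argument for $s\ge 1$ — and your spectral identification of $G$, which agrees with the paper's \fref{equivG}) is the same as the paper's. But the core quantitative input is missing, and the tools you propose for it do not suffice. To reach \emph{every} $s>0$ one needs, after dyadic localization at $N_1,N_2,N_3,N_4$, a bound on $\int\!\!\int G(u_1\bar u_2)u_3\bar u_4$ by $m(N_1,N_2,N_3,N_4)^{\eps}\prod_j\|u_j\|_{X^{0,b'}}$, where $m$ is the product of the two \emph{smallest} frequencies and $\eps$ is arbitrarily small (this is Proposition \ref{theo-quadri}/\ref{prop-quadri} and Lemma \ref{reform}). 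Your route — H\"older plus the Burq--G\'erard--Tzvetkov bilinear Strichartz estimates on $\SS^2$ — gives at best a loss $\min(N,M)^{1/4}$ per pair, which is exactly the cubic-NLS threshold $s>1/4$, not $s>0$. In the regime you yourself flag (all four frequencies comparable, or two high frequencies pairing to a low output), the smoothing of $G$ gives nothing: for a free wave $e^{it\Delta_\sigma}H_n$ the product $|H_n|^2$ has a time-independent constant component of size $\|H_n\|_{L^2}^2$, so no output-frequency gain is available, and Cauchy--Schwarz plus bilinear Strichartz leaves a loss $N^{1/4}$ (or worse) there. The required bound in that configuration is $N^{\eps}$, which is essentially sharp: the computation with $\psi_n=(x_1+ix_2)^n$ in Lemma \ref{lemtech} shows the quadrilinear form grows like $\log n$ relative to $\|\psi_n\|_{L^2}^4$. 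So the statement "the bilinear estimates carry enough powers to sum down to $s>0$" is precisely the unproved (and, with those tools alone, false) step; what the paper actually uses instead is a different set of ingredients: a microlocal decomposition of spherical harmonics with semiclassical cut-offs, one-dimensional semiclassical Strichartz estimates in a well-chosen coordinate giving $\|V_j\|_{L^p_xL^q_y}\le CN_j^{1/p}\|H^j_{n_j}\|_{L^2}$, an $L^\infty_xL^{1/(2\eps)}_y$ bound on $G(V_1V_2)$ coming from the Coulomb kernel via Hardy--Littlewood--Sobolev (constants independent of all frequencies), and an arithmetic counting of the resonance sets $\eps_1n_1(n_1+1)+\dots+\eps_4n_4(n_4+1)=k$, which is what produces the factor $(N_\alpha N_\beta)^{\eps}$ over the two smallest frequencies.

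Two smaller points. First, your claim that a factor $T^{\theta}$ comes out "since $b-1<-\tfrac12<b$" is not correct as stated: with output exponent $b-1$ and input $b>\tfrac12$ one has $b+(1-b)=1$ and no positive power of $T$ is gained; the paper instead proves the estimate with some $b'<\tfrac12$ (by interpolating an $X^{0,1/4}$-based bound, obtained from Sobolev in time and $L^4$ bounds, with the $b>\tfrac12$ bound) and then uses $b<1-b'$ in the Duhamel estimate of Lemma \ref{lem-bourgain}. This is standard to repair but should be said correctly. Second, your treatment of the dyadic summation should also invoke the frequency-support argument (via $G=(1-4\Delta_\sigma)^{-1/2}$ and products of spherical harmonics) killing the terms with $N_4$ much larger than $\max(N_1,N_2,N_3)$, as in the paper's Step 3; otherwise the sum over the dual frequency does not close. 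The final part of your argument (conservation laws, coercivity from $G\ge 0$, globalization for $s\ge1$ and persistence of regularity) is fine and matches what the paper asserts.
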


Let us now deal with the limit case $s=0$: after Theorem \ref{theo1}, the question whether this system is well-posed on $L^2(\SS^2)$ is natural. In the case of a plane, the answer is positive thanks to Strichartz estimates in dimension 2. Our second result is that, in the case of the sphere, this system is not locally (uniformly) well-posed on $\SS^2$. The precise statement is as follows.
\begin{theorem}
\label{theo2}
For all ball $\calB$ of $L^2(\SS^2)$ centered on 0 and for all $T>0$, the application $u_0\mapsto u$ is {\em not} uniformly continuous from $\calB\cap H^1(\SS^2)$, endowed with the $L^2$ norm, into $C([-T,T],L^2(\SS^2))$.
\end{theorem}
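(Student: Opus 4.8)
To prove Theorem~\ref{theo2} the plan is to construct, for each $n$, two solutions of \fref{sp} issued from data concentrated along a fixed closed geodesic and differing only by a small amplitude, and to show that the nonlinear Poisson interaction makes them develop a phase mismatch of order one in a fixed time while the data become arbitrarily close in $L^2$. Fix the equator $\{x_3=0\}$ of $\SS^2$ and let $\varphi_n$ be the $L^2(\SS^2)$-normalized spherical harmonic of degree $n$ of highest weight; up to normalization $\varphi_n$ is the restriction to $\SS^2$ of $(x_1+ix_2)^n$, so that $\Delta_\sigma\varphi_n=-n(n+1)\varphi_n$. Then $\varphi_n\in H^1(\SS^2)$, and $|\varphi_n|^2$ is a probability density on $\SS^2$ which concentrates \emph{exponentially} on the equator: its mass outside any fixed neighbourhood of the equator is exponentially small in $n$, and the bulk of it lies in a band of width $\sim n^{-1/2}$. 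Given the ball $\calB$, of radius $R$ say, and given $T>0$, I would fix a small constant $\kappa\in(0,R)$ and, for a sequence $\delta_n\to0^+$ to be chosen, set $u_{0,n}=\kappa\varphi_n$ and $v_{0,n}=(\kappa+\delta_n)\varphi_n$. Both lie in $\calB\cap H^1(\SS^2)$ for $n$ large, $\|u_{0,n}-v_{0,n}\|_{L^2}=\delta_n\to0$, and by the global well-posedness and the conservation laws \fref{conslaw} of Theorem~\ref{theo1} the associated solutions $u_n$ and $v_n$ are global, with $\|u_n(t)\|_{L^2}\equiv\kappa$ and $\|v_n(t)\|_{L^2}\equiv\kappa+\delta_n$.

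The first step is to show, using only the conservation laws, that $u_n(t)$ stays close to ${\rm span}(\varphi_n)$ for all $t$. The key remark is that the subspace $\mathcal H_n$, defined as the closed span in $L^2(\SS^2)$ of $\{Y_\ell^n:\ell\ge n\}$, is invariant under the flow of \fref{sp}: if $u\in\mathcal H_n$ then $|u|^2$ is invariant under rotations about the $x_3$-axis, hence so is $G(|u|^2)$, and therefore $G(|u|^2)u\in\mathcal H_n$. On $\mathcal H_n$ the operator $-\Delta_\sigma$ has lowest eigenvalue $n(n+1)$ (realized on $\varphi_n$) and all other eigenvalues $\ge(n+1)(n+2)$. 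Writing $u_n(t)=c_n(t)\varphi_n+\rho_n(t)$ with $\rho_n(t)\perp\varphi_n$ (so $\rho_n(t)\in\mathcal H_n$) and putting $\gamma_n:=\int_{\SS^2}G(|\varphi_n|^2)|\varphi_n|^2\,d\sigma\ge0$, the conservation law \fref{conslaw}, the nonnegativity of the Poisson energy, and the spectral gap give
\[
n(n+1)|c_n(t)|^2+(n+1)(n+2)\|\rho_n(t)\|_{L^2}^2\ \le\ \|\na_\sigma u_n(t)\|_{L^2}^2\ \le\ E_0\ =\ n(n+1)\kappa^2+\tfrac12\kappa^4\gamma_n ,
\]
which, combined with $|c_n(t)|^2+\|\rho_n(t)\|_{L^2}^2=\kappa^2$, yields $\|\rho_n(t)\|_{L^2}^2\le\kappa^4\gamma_n/(4(n+1))$, uniformly in $t$; the same bound holds for $v_n$.

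The second, and essential, step is a pair of quantitative estimates on the Poisson potential generated by $\varphi_n$. Using the explicit form of $\varphi_n$ together with the fact that the restriction of $|x-y|^{-1}$ to $\SS^2$ behaves, near the diagonal, like the two-dimensional Coulomb kernel, one should establish that $\gamma_n\to+\infty$ (more precisely $\gamma_n$ grows like $\log n$), that $\|G(|\varphi_n|^2)\|_{L^\infty}\lesssim\log n$, and — the crucial point — that
\[
\bigl\|\,\bigl(G(|\varphi_n|^2)-\gamma_n\bigr)\varphi_n\,\bigr\|_{L^2(\SS^2)}\ \le\ C
\]
uniformly in $n$. The logarithmic growth of $\gamma_n$ comes from integrating $|x-y|^{-1}$ across the $n^{-1/2}$-wide band on which $|\varphi_n|^2$ is concentrated; the uniform $L^2$ bound is exactly where the exponential concentration of $\varphi_n$ is used, since $G(|\varphi_n|^2)$ is within $O(1)$ of $\gamma_n$ on the bulk of the mass of $|\varphi_n|^2$, while the region where this difference is large — of size $O(\log n)$, near the poles — carries only exponentially small mass. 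This harmonic-analysis step is the main obstacle.

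Granting these estimates, projecting \fref{sp} on $\varphi_n$ gives
\[
i\,\dot c_n=\bigl(-n(n+1)+\gamma_n|c_n|^2\bigr)c_n+E_n ,
\]
where $E_n$ collects only terms involving $\rho_n$ and is therefore negligible: $\sup_{[-T,T]}|E_n(t)|\to0$ as $n\to\infty$ (for $\kappa$ fixed), being controlled by $\|G(|\varphi_n|^2)\|_{L^\infty}$, by $\|(G(|\varphi_n|^2)-\gamma_n)\varphi_n\|_{L^2}$, and by the bound $\|\rho_n(t)\|_{L^2}\lesssim\kappa^2(\log n/n)^{1/2}$ of the first step. Removing the linear phase $e^{-in(n+1)t}$ and using $\|u_n(t)\|_{L^2}\equiv\kappa$, one gets $c_n(t)=\kappa\,e^{-in(n+1)t}\,e^{-i\gamma_n\kappa^2 t}\,(1+o(1))$, uniformly on $[-T,T]$, and similarly $\langle v_n(t),\varphi_n\rangle=(\kappa+\delta_n)\,e^{-in(n+1)t}\,e^{-i\gamma_n(\kappa+\delta_n)^2 t}\,(1+o(1))$. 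Now choose $\delta_n:=\pi/(\kappa\gamma_n T)$, which tends to $0$ because $\gamma_n\to+\infty$, and $t_n:=\pi/\bigl(\gamma_n((\kappa+\delta_n)^2-\kappa^2)\bigr)$, which lies in $(0,T/2]$. Since $\|\rho_n(t)\|_{L^2}$ and its analogue for $v_n$ tend to $0$, and since $\gamma_n((\kappa+\delta_n)^2-\kappa^2)\,t_n=\pi$, we obtain
\[
\|u_n(t_n)-v_n(t_n)\|_{L^2}=\bigl|\,\kappa-(\kappa+\delta_n)e^{-i\pi}\,\bigr|+o(1)=2\kappa+\delta_n+o(1)\ \ge\ \kappa
\]
for $n$ large, whereas $\|u_{0,n}-v_{0,n}\|_{L^2}=\delta_n\to0$. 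Since $\sup_{[-T,T]}\|u_n(t)-v_n(t)\|_{L^2}\ge\|u_n(t_n)-v_n(t_n)\|_{L^2}\ge\kappa$ for $n$ large, the map $u_0\mapsto u$ cannot be uniformly continuous from $\calB\cap H^1(\SS^2)$, endowed with the $L^2$ norm, into $C([-T,T],L^2(\SS^2))$, which proves the theorem.
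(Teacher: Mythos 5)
Your strategy is genuinely different from the paper's. The paper does not run a dynamical argument at all: it constructs \emph{exact} solutions $u_n(t)=e^{-it\omega_n}f_n$ by minimizing the energy $\calE$ on the mass-sphere of the equivariant sector $L^2_n$, and the decoherence comes from the asymptotics of the nonlinear eigenvalue $\omega_n$ in Lemma \ref{lemfn}; the only harmonic-analysis inputs are the scalar asymptotics \fref{estim2} of $\int_{\SS^2}G(|\psi_n|^2)|\psi_n|^2\,d\sigma$ and Sogge's bound \fref{sogge}, the closeness of $f_n$ to $\phi_n$ being extracted from the energy comparison and the spectral gap in $L^2_n$. You instead take data exactly proportional to the highest-weight harmonic and control the flow directly: flow-invariance of the equivariant sector, mass/energy conservation \fref{conslaw} and the spectral gap to confine $u_n(t)$ to an $o(1)$-neighbourhood of ${\rm span}(\varphi_n)$ (your Step 1 is correct and mirrors the paper's inequality \fref{n3bis}), then an effective ODE for the coefficient $c_n(t)$. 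The mechanism (equivariance, spectral gap, $\log n$ growth of the Poisson self-energy) is the same; what the paper's route buys is that the phase $e^{-i\omega_nt}$ is exact, so no error has to be propagated in time, whereas your route avoids the variational construction and gives a perhaps more transparent ``phase decoherence of two amplitudes'' picture. Your choices $\delta_n\sim(\log n)^{-1}$, $t_n\approx T/2$ do yield the failure of uniform continuity as stated (the paper's choice \fref{kappan} even gives $t_n\to0$, but that extra strength is not needed for Theorem \ref{theo2}).

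There is, however, a genuine gap, which you yourself flag: the pivotal estimate $\|(G(|\varphi_n|^2)-\gamma_n)\varphi_n\|_{L^2}\le C$ (together with $\|G(|\varphi_n|^2)\|_{L^\infty}\lesssim\log n$) is asserted, not proved, and it is strictly stronger than anything established in the paper --- Lemma \ref{lemtech} only computes the single averaged quantity \fref{estim2}, and the paper's argument is designed precisely so that no pointwise or weighted control of the potential $G(|\psi_n|^2)$ is ever needed. Your claim is in fact true: writing $|\varphi_n|^2\simeq\sqrt{n}\,e^{-n\theta^2}$ in the latitude $\theta$ and noting that the deviation $|G(|\varphi_n|^2)-\gamma_n|$ grows only like $\log(1+\theta\sqrt n)$ off the concentration band (and like $\log n$ near the poles, where the mass is exponentially small), the substitution $s=\theta\sqrt n$ gives a convergent Gaussian integral; but this requires a pointwise two-sided analysis of the potential of the same explicit kind as in the proof of Lemma \ref{lemtech}, and without it the proof is incomplete. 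A second, smaller omission: the assertion $\sup_{[-T,T]}|E_n|\to0$ needs to be carried out term by term. The largest contributions, of the type $\langle(G(|\varphi_n|^2)-\gamma_n)\varphi_n,\rho_n\rangle$, are small only because of the orthogonality $\rho_n\perp\varphi_n$ combined with your key estimate; the quadratic- and cubic-in-$\rho_n$ terms cannot be handled by the $L^\infty$ bound alone and require Hardy--Littlewood--Sobolev/\fref{es1}-type product estimates together with $\|\varphi_n\|_{L^4}^2\lesssim n^{1/4}$ and the a priori bound $\|\rho_n\|_{H^1}\lesssim n$ from energy conservation. These estimates do close, so the approach is viable, but as written both the central potential estimate and the error analysis remain to be supplied.
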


Our third result concerns the asymptotic analysis. The starting model is the Schr\"odinger-Poisson system (or Hartree equation) in $\RR^3$ with a strong potential that confines the particles near the sphere. We consider the following system, on the unknown $u^\eps(t,x)$:
\begin{equation}
  \label{schrod}
  i\pa_t u^\eps=-\Delta u^\eps+\Vc^\eps u^\eps+V(|u^\eps|^2)u^\eps,\qquad u^\eps(t=0)=u_0^\eps\,,
\end{equation}
where $\Vc^\eps$ and $V(|u|^2)$ are respectively the applied confinement potential and the selfconsistent Poisson potential, given by
\begin{equation}
\label{potconf}
\Vc^\eps(x)=\frac{1}{\eps^2}\Vc\left(\frac{|x|-1}{\eps}\right)
\end{equation}
and
\begin{equation}
  \label{poisson}
  V(|u|^2)(x)=\frac{1}{4\pi}\int\frac{1}{|x-x'|}|u(x')|^2\,dx'\,.
\end{equation}
The parameter $\eps>0$ is the order of magnitude of the width of the confined gas, compared to a typical length, for instance the radius of the sphere. The square of $\eps$ also denotes the ratio between the kinetic energy of the particles and the confinement energy. One refers to \cite{bmp} and \cite{magnetic}, where similar singularly perturbed systems were derived and put in dimensionless form, starting from systems in physical variables. Our aim is to derive a simpler asymptotic system satisfied by $u^\eps$ as $\eps\to 0$.

The function $\Vc$ is fixed, independent of $\eps$, and is supposed to go to infinity at the infinity, faster than the harmonic potential, as  stated in the following assumption. Note that we do not solve the asymptotic problem in the precise case of a harmonic confinement potential, due to a lack of a priori estimates (Assumption $\alpha>2$ is used several times in the proofs of Lemma \ref{noyaupoisson} and Proposition \ref{prop2}).
\begin{assumption}
\label{H1}
The confinement potential is a $\calC^\infty$ function such that
\be
\label{superharm}
\forall z\in \RR\qquad \Vc(z)\geq C|z|^\alpha,
\ee
where $\alpha>2$. Moreover, it satisfies the following condition:
\begin{equation}
\label{reinfc}
\forall k \in \NN, \quad \exists C_k>0:\,\forall z\in \RR
\qquad
\left|\frac{\pa^k \Vc}{\pa z^k}(z)\right|
\leq C_k \Vc(z).
\end{equation}
\end{assumption}

Let us define the energy adapted to our system. We set
$$\calB^1=\left\{u\in \dot{H^1}(\RR^3):\;(\Vc^\eps)^{1/2}u\in L^2(\RR^3)\right\},$$
equipped with the norm
\begin{equation}
  \label{B1}
\|u\|_{\calB^1}^2=\|\na u\|_{L^2}^2+\|(\Vc^\eps)^{1/2}u\|_{L^2}^2.
\end{equation}
The two conservations laws associated to \fref{schrod} are the conservation of mass and the conservation of energy:
\be
\label{consL2}
\|u^\eps(t)\|_{L^2}^2=\|u^\eps_0\|_{L^2}^2
\ee
\bee
\|\na u^\eps(t)\|_{L^2}^2+\|(\Vc^\eps)^{1/2}u^\eps(t)\|_{L^2}^2+\frac{1}{2}\|\na V(|u^\eps(t)|^2)\|_{L^2}^2\qquad\qquad &&\nonumber\\=\|\na u_0^\eps\|_{L^2}^2+\|(\Vc^\eps)^{1/2}u_0^\eps\|_{L^2}^2+\frac{1}{2}\|\na V(|u_0^\eps|^2)\|_{L^2}^2.&&\label{consenergy}
\eee
The scaling \fref{potconf} of the confinement potential suggests that, if $\|u_0^\eps\|_{L^2}^2$ is of order 1, it will be natural to start with an energy of order $\frac{1}{\eps^2}$. Consequently, the solution of \fref{schrod} will satisfy the following natural bounds:
$$\|u^\eps\|_{L^2}\leq C,\qquad \eps\|u^\eps\|_{\calB^1}\leq C.$$
From Theorem \ref{theo2}, one can guess that these bounds will not be sufficient in order to analyze the limit of $u$ as $\eps$ goes to zero. Consequently, we shall assume some regularity of the initial data with respect to the angular variable $\sigma$. To be more precise, let us introduce the spherical coordinates $(r,\sigma)\in \RR_+^*\times \SS^2$ defined for all $x\in \RR^3\setminus\{0\}$ by $r=|x|$ and $\sigma=\frac{x}{|x|}$. We recall that the Laplace operator has the following form in spherical coordinates:
$$\Delta=\frac{1}{r^2}\pa_r\left(r^2\pa_r\right)+\frac{1}{r^2}\Delta_\sigma.$$
In particular, a remarkable property is that this operator commutes with $\Delta_\sigma$. This property will be crucial is our nonlinear analysis. In the sequel, we shall set $\lnar=(1-\Delta_\sigma)^{1/2}$. We introduce a last notation. The transversal confinement operator is the following operator:
\be
\label{Hr}
H_r=-\frac{1}{r^2}\pa_r\left(r^2\pa_r\right)+\Vc^\eps(r),
\ee
with domain 
$$D(H_r)=\left\{u\in L^2(\RR_+,r^2dr):\,\frac{1}{r^2}\pa_r\left(r^2\pa_ru\right)\in L^2(\RR_+,r^2dr),\, \Vc^\eps u\in L^2(\RR_+,r^2dr)\right\}.$$ Assumption \ref{H1} implies that $H_r$ is self-adjoint on $L^2(\RR_+,r^2dr)$, with a compact resolvent. We shall denote by $(E_p^\eps)_{p\in \NN}$, $(\psi_p^\eps)_{p\in \NN}$ its eigenvalues and its eigenfunctions.

We make the following assumption on the initial data.
\begin{assumption}
\label{Hinit}
The initial data $u_0^\eps$ satisfies
\be
\label{init1}
\|\lnar u_0^\eps\|_{L^2}+\eps\|u_0^\eps\|_{\calB^1}\leq C,
\ee
where $C>0$ is independent of $\eps\in (0,1]$.
Moreover, we have
\be
\label{init2}
\limsup_{\eps\to 0}\left(\|\un_{\eps^2 H_r>R} \,\lnar u^\eps_0\|_{L^2}+\| \un_{\lnar>R} \,\lnar u^\eps_0\|_{L^2}\right)\to 0\mbox{ as }R\to +\infty.
\ee
\end{assumption}
Let us comment on these assumptions. In \fref{init1}, the $L^2$ bound of $\lnar u_0$ means a supplementary decay at the infinity for this quantity, compared to a $H^1$ norm which would only control the $L^2$ norm of $\frac{1}{r}\lnar u_0^\eps$. The second assumption means that $\lnar u_0^\eps$ is partially relatively compact in $L^2$ with respect to the $\sigma$ variable $\eps^2$-oscillatory with respect to the operator $H_r$ defined by \fref{Hr} (see \cite{GG} for an abstract definition of $\eps$-oscillatory sequences with respect to a self-adjoint operator).

Our third theorem is the following one.
\begin{theorem}
\label{thm3}
Under Assumptions \pref{H1} and \pref{Hinit} on the confinement potential and the initial data, for all $\eps>0$, \fref{schrod} admits a unique global solution $u^\eps\in \calC(\RR,L^2\cap \calB_1)$. Moreover, the following equation admits a unique global solution $v^\eps\in \calC^0(\RR,\calB^1)$ such that $\lnar v^\eps\in  \calC^0(\RR,L^2)$:
\begin{equation}
  \label{schrodlim}
  i\pa_tv^\eps=H_rv^\eps-\Delta_\sigma v^\eps+\GG\left(|v^\eps|^2\right)v^\eps,\qquad v^\eps(t=0)=u_0^\eps\,,
\end{equation}
where $\GG$ is the following generalization of the operator $G$ defined in \fref{G}: for functions defined on $\RR^3$,
\begin{equation}
  \label{G2}
  \GG(f)(r\sigma)=\frac{1}{4\pi}\int_0^{+\infty}\int_{\SS^2} \frac{1}{|\sigma-\sigma'|}f(r'\sigma') r'^2dr'd\sigma'.
\end{equation}
Finally, we have the approximation of \fref{schrod} by the limit system \fref{schrodlim}: for all $T>0$,
$$\lim_{\eps\to 0}\|\lnar (u^\eps-v^\eps)\|_{L^\infty([-T,T],L^2)}=0.$$
\end{theorem}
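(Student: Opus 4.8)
The plan is to treat the three assertions in turn: global well-posedness for \fref{schrod}, global well-posedness for the limit system \fref{schrodlim}, and the convergence $u^\eps-v^\eps\to 0$ in $\lnar L^\infty_t L^2_x$. For the well-posedness of \fref{schrod}, the Poisson potential $V(|u|^2)$ is handled by standard Hartree theory in $\RR^3$ (it is bounded, and $\|\na V(|u|^2)\|_{L^2}$ is controlled by $\|u\|_{L^2}^2$ via Hardy--Littlewood--Sobolev), while the confinement term $\Vc^\eps$ is a nonnegative potential making $-\Delta+\Vc^\eps$ a nonnegative self-adjoint operator with form domain $\calB^1$; local existence in $L^2\cap\calB^1$ follows from a fixed-point argument on the Duhamel formula using the $\calB^1$ energy, and globalization uses the two conservation laws \fref{consL2}--\fref{consenergy}. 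The key structural point I would emphasize is that, since $\Delta$ commutes with $\Delta_\sigma$ in spherical coordinates and the Poisson kernel commutes with rotations, the operator $\lnar$ commutes with the flow of \fref{schrod} in the sense that $\lnar u^\eps$ solves a companion equation; propagating the bound \fref{init1} on $\|\lnar u_0^\eps\|_{L^2}$ then gives $\lnar u^\eps\in\calC(\RR,L^2)$ with a bound uniform in $\eps$ on compact time intervals. I expect this to be largely routine and to mirror the confinement analyses in \cite{bmp,magnetic,bcfm}.

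For the limit system \fref{schrodlim}, the operator $H_r-\Delta_\sigma$ is self-adjoint and nonnegative with form domain $\calB^1\cap\{\lnar\,\cdot\in L^2\}$, and $\GG(|v|^2)$ is again a bounded, rotation-commuting potential with the analogous regularizing properties; the same fixed-point-plus-conservation-law scheme yields the unique global solution $v^\eps\in\calC(\RR,\calB^1)$ with $\lnar v^\eps\in\calC(\RR,L^2)$. I would carry out both existence proofs in parallel to make the subtraction step below transparent: the only difference between \fref{schrod} and \fref{schrodlim} is that the full Poisson operator $V$ (which couples the $r$ and $\sigma$ variables through $|x-x'|=|r\sigma-r'\sigma'|$) has been replaced by the ``radially averaged'' kernel $\GG$ in which $|r\sigma-r'\sigma'|$ is frozen to $|\sigma-\sigma'|$. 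Heuristically this is justified because, on the support of the confined solution, $r\approx 1$, so $|r\sigma-r'\sigma'|\approx|\sigma-\sigma'|$ up to errors of size $O(|r-1|+|r'-1|)=O(\eps)$ in the relevant norms; this is exactly the content I would isolate as a lemma (a ``$\GG$ approximates $V$ on $\eps$-concentrated functions'' estimate), and it is presumably what Lemma \ref{noyaupoisson} provides.

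The convergence proof is the heart of the matter. Set $w^\eps=u^\eps-v^\eps$, subtract the two Duhamel formulas, and apply $\lnar$. The linear propagators differ only through $\Delta$ versus $H_r-\Delta_\sigma$, but in fact $-\Delta+\Vc^\eps=H_r-\tfrac1{r^2}\Delta_\sigma$, so one writes $-\Delta+\Vc^\eps=(H_r-\Delta_\sigma)+(1-\tfrac1{r^2})\Delta_\sigma$ and treats the commutator term $(1-\tfrac1{r^2})\Delta_\sigma$ as a source; on the confined solution $1-\tfrac1{r^2}=O(|r-1|)$ is small in the appropriate averaged sense, using the $\eps$-oscillation hypothesis \fref{init2} to control $\Delta_\sigma u^\eps$ against $\eps^2 H_r$. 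The nonlinear difference splits as $[\GG(|u^\eps|^2)-\GG(|v^\eps|^2)]u^\eps + \GG(|v^\eps|^2)w^\eps + [V(|u^\eps|^2)-\GG(|u^\eps|^2)]u^\eps$; the first two are Lipschitz-type terms absorbed by a Gronwall argument in the norm $\|\lnar\,\cdot\|_{L^\infty_t L^2_x}$ using the uniform bounds already established, and the third is the ``kernel replacement'' error handled by the lemma above together with \fref{init2}. The main obstacle, and where the hypotheses \fref{init1}--\fref{init2} and $\alpha>2$ are genuinely needed, is obtaining uniform-in-$\eps$ control of $\lnar u^\eps$ and of the concentration of $u^\eps$ near $r=1$: one must show that the $\eps^2 H_r$-oscillation and the $\lnar$-decay of the initial data are propagated by the flow, which requires commuting $\lnar$ and spectral projectors of $H_r$ through the nonlinearity and exploiting that $\Vc$ grows superharmonically so that the confinement dominates; assembling these a priori estimates uniformly in $\eps$ is the technically delicate step, after which the Gronwall closure and the limit $\eps\to 0$ are routine.
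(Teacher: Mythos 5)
Your high-level skeleton (well-posedness of both systems, a ``kernel replacement'' estimate comparing $V$ and $\GG$, a geometric error $(1-\tfrac1{r^2})\Delta_\sigma$, and a Gronwall closure in $\|\lnar\cdot\|_{L^2}$) matches the paper's, which in fact passes through the intermediate system \fref{schrodinter} (Propositions \ref{prop1} and \ref{prop2}). But the two points you declare routine are exactly where the proof lives, and your plan for them would not close. First, the uniform-in-$\eps$ bound on $\|\lnar u^\eps\|_{L^2}$ over an \emph{arbitrary} interval $[-T,T]$ is not obtained by ``propagating'' \fref{init1}: since $\lnar$ commutes with the Hamiltonian, Gronwall with the cubic tame estimate \fref{pois1} only gives a bound on some small time $T$ independent of $\eps$. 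Globally uniform bounds are available only for the limit solution $v^\eps$, through its conservation laws \fref{conslimit1}--\fref{conslimit3}; the bound for $u^\eps$ (and $w^\eps$) on all of $[-T,T]$ comes out \emph{a posteriori}, by a continuity/bootstrap argument in which the convergence estimate itself is used to show that $\|\lnar w^\eps\|$ never reaches the threshold $2M_0$, so that the local existence time of the uniform bound extends to $T$. Without this loop your statement ``bound uniform in $\eps$ on compact time intervals, largely routine'' is unjustified.

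Second, your treatment of the error terms relies on controlling $\Delta_\sigma u^\eps$ against $\eps^2H_r$ by propagating \fref{init2} along the flow of \fref{schrod}, ``commuting spectral projectors of $H_r$ through the nonlinearity''. This cannot work as stated: the weighted operators $(1+\eps^2H_r)^{s/2}\lnar$ commute with $H_r-\Delta_\sigma$ but \emph{not} with the full Laplacian $\Delta=\frac1{r^2}\pa_r(r^2\pa_r)+\frac1{r^2}\Delta_\sigma$, so transverse regularity of this type can only be propagated along the limit flow \fref{schrodlim}, not along \fref{schrod} or \fref{schrodinter} (the paper points this out explicitly after \fref{normeH2ter}). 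This is precisely why the intermediate model is introduced: the $V$-versus-$\GG$ error \fref{poisdiff} is evaluated on $u^{\eps,\delta}$ (it needs only $\lnar^2$, which does commute with $-\Delta+\Vc^\eps$), while the geometric error $\chi(r)\frac{r^2-1}{r^2}\Delta_\sigma$ is evaluated on $v^{\eps,\delta}$, whose $\lnar^6$ and $(1+\eps^2H_r)^{3/2}\lnar$ norms can be propagated via Lemmas \ref{noyaupoisson2} and \ref{lemequivnorm}. Moreover, none of these higher norms is finite for the actual data, which only satisfy $\lnar u_0^\eps\in L^2$ uniformly; the hypothesis \fref{init2} is used to build, for each $\delta>0$, regularized data $u_0^{\eps,\delta}$ with uniformly bounded higher norms and $\delta$-close in $\lnar L^2$, the estimates are performed on the regularized solutions with constants $C_{M,\delta}$, and the result is transferred back by the Lipschitz estimates \fref{poisuv1}--\fref{poisuv2} before letting $\delta\to0$. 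This regularization/density layer, and the choice of \emph{which} solution each error term is measured on, are missing from your proposal; without them the Gronwall argument cannot even be set up, since the kernel-replacement and geometric source terms are not controlled by the norms you actually have.
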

Notice that, in \fref{schrodlim}, the nonlinear Schr\"odinger dynamics operates only in the $\sigma$ variable. Indeed, the function $\omega^\eps$ defined by
$$\omega^\eps(t,r,\sigma)=e^{itH_r}v^\eps(t,r,\sigma)$$
solves the system
\begin{equation}
  \label{schrodlim2}
  i\pa_t\omega^\eps=-\Delta_\sigma \omega^\eps+\GG\left(|\omega^\eps|^2\right)\omega^\eps,\qquad \omega^\eps(t=0)=u_0^\eps\,,
\end{equation}
which is a "mixed-state" version of the scalar equation \fref{sp}. Indeed, let us decompose the function $\omega^\eps$ on the eigenvectors of the confinement operator $H_r$, setting
$$\omega^\eps(t,r,\sigma)=\sum_{p=0}^{+\infty}\omega_p^\eps(t,\sigma)\psi_p^\eps(r),$$
i.e.
$$v^\eps(t,r,\sigma)=\sum_{p=0}^{+\infty}e^{-itE_p^\eps}\omega_p^\eps(t,\sigma)\psi_p^\eps(r).$$
Then each component $\omega^\eps_p$ satisfies
$$
  i\pa_t\omega_p^\eps=-\Delta_\sigma \omega_p^\eps+G\left(\sum_{q=0}^{+\infty}|\omega_q^\eps|^2\right)\omega_p^\eps\,,\qquad \omega_p^\eps(t=0)=\int_0^{+\infty}u_0\,\psi_p^\eps\, r^2dr\,.
$$
Notice that the components $\omega_p^\eps$ are only coupled through the selfconsistent potential. In particular, if the initial data is polarized on a single eigenmode, i.e. $u_0^\eps=v_{p_0}\psi_{p_0}^\eps$, then the solution of \fref{schrodlim} remains polarized on the same mode $p_0$, for all time: $v^\eps(t,r,\sigma)=e^{-itE^\eps_{p_0}/\eps^2}\omega^\eps_{p_0}(t,\sigma)\psi_{p_0}^\eps(r)$ and $\omega_{p_0}^\eps$ satisfies \fref{sp}.

\bs

\section{The Schr\"odinger-Poisson system on the sphere}

This section is devoted to the analysis of the Schr\"odinger-Poisson system \fref{sp} on the two-dimensional sphere $\SS^2$. As announced in the introduction, we will prove that this system is locally well-posed in $H^s(\SS^2)$ for all $s>0$, but not for the critical case $L^2(\SS^2)$. To make precise this statement, let us recall the notion of well-posedness that is meant here.
\begin{definition}
\label{def-wellposedness}
Let $s\in \RR$. We shall say that the equation \fref{sp} is, locally in time, uniformy well-posed in $H^s(\SS^2)$ if, for any bounded subset $B$ of $H^s(\SS^2)$, there exists $T>0$ and a Banach space $X_T$ continuously embedded into $C([-T,T],H^s(\SS^2))$, such that
\begin{itemize}
\item[(i)] For every Cauchy data $u_0\in B$, \fref{sp} admits a unique solution $u\in X_T$\,,
\item[(ii)] If $u_0\in H^\sigma(\SS^2)$ for $\sigma>s$, then $u\in C([-T,T],H^\sigma(\SS^2))$,
\item[(iii)] The map $u_0\in B\mapsto u\in X_T$ is uniformly continuous.
\end{itemize}
\end{definition}

The proof of Theorem \ref{theo1} uses essentially the techniques developed in the works of Burq, G\'erard, Tzvetkov \cite{BGT1,BGT2,BGT3,BGT4}. It can be decomposed into two main steps: the proof of well-posedness under the assumption of a multilinear estimate and the proof of the multilinear estimate itself. More precisely, we follow the work of G\'erard and Pierfelice \cite{GP}, who have adapted the methodology to the particular structure of Hartree-type nonlinearities as in \fref{sp}. The analysis relies in a crucial way on a quadrilinear estimate. For the sake of completeness and readability, we sketch all the steps of this proof, although the new contribution in this proof essentially concerns the quadrilinear estimate proved in subsection \ref{sect-quadri}.

More precisely, Theorem \ref{theo1} can be seen as a direct corollary of the following Propositions \ref{theo-quadri} and \ref{prop-quadri}, completed with the conservation laws \fref{conslaw} (which are very classical are will not be proved here).

\subsection{Well-posedness via quadrilinear estimate}
\label{sect-bourgain}

The following proposition reduces the study of wellposedness for \fref{sp} to a quadrilinear estimate.

\begin{proposition}
\label{theo-quadri}
 Suppose that, for every $\chi\in \calC^\infty_0(\RR)$ and for all $\eps\in(0,\frac{1}{2})$, there exists $C_\eps>0$ such that, for any $f_1$, $f_2$, $f_3$, $f_4 \in L^2(\SS^2)$ satisfying
  \begin{equation}
    \label{local}
    \un_{\sqrt{1-\Delta_\sigma}\in [N_j, 2N_j]}(f_j)=f_j,\qquad j=1,2,3,4,
  \end{equation}
one has the following quadrilinear estimate:
\begin{equation}
  \label{quadri}
\ds \sup_{\tau\in \RR}\left|\int_\RR \int_{\SS^2}\chi(t)e^{it\tau}G(u_1u_2)u_3u_4\,d\sigma dt\right| \leq C_\eps (m(N_1,N_2,N_3,N_4))^\eps\prod_{i=1}^4 \|f_j\|_{L^2(\SS^2)}
\end{equation}
where $u_j(t)=S(t) f_j$ for $j=1,2,3,4$, $S(t)=e^{it\Delta_\sigma}$ denoting the free evolution, and where $m(N_1,N_2,N_3,N_4)$ denotes the product of the smallest two numbers among $N_1$, $N_2$, $N_3$, $N_4$. Then the Cauchy problem \fref{sp} is locally uniformly well-posed in $H^s(\SS^2)$ for any $s>0$.
\end{proposition}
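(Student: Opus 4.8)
\noindent\emph{Proof plan.} The plan is to run the now-standard Bourgain-space fixed-point scheme of Burq, G\'erard and Tzvetkov, in the Hartree-adapted form of G\'erard and Pierfelice, and to reduce everything to the hypothesis \fref{quadri}. Recall that $-\Delta_\sigma$ has eigenvalues $k(k+1)$, $k\ge 0$, with spectral projectors $\Pi_k$, and $S(t)=\sum_k e^{-itk(k+1)}\Pi_k$. For $s,b\in\RR$ set
\[
\|u\|_{X^{s,b}}^2=\sum_{k\ge 0}\langle k\rangle^{2s}\int_\RR\langle\tau+k(k+1)\rangle^{2b}\,\big\|\Pi_k\widehat u(\tau,\cdot)\big\|_{L^2(\SS^2)}^2\,d\tau,
\]
where $\widehat{\phantom{u}}$ denotes the Fourier transform in $t$, together with the local-in-time restriction spaces $X^{s,b}_T$. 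I take for granted the classical linear estimates attached to these spaces (see \cite{BGT1,BGT2,BGT3,BGT4}): the embedding $X^{s,b}_T\hookrightarrow\calC([-T,T],H^s(\SS^2))$ for $b>\tfrac12$; the homogeneous bound $\|\chi(t)S(t)u_0\|_{X^{s,b}}\lesssim\|u_0\|_{H^s}$; the inhomogeneous Duhamel bound $\big\|\chi(t)\int_0^tS(t-s)F(s)\,ds\big\|_{X^{s,b}}\lesssim\|F\|_{X^{s,b-1}}$ for $\tfrac12<b\le 1$; the time gain $\|u\|_{X^{s,b'}_T}\lesssim T^{\,b-b'}\|u\|_{X^{s,b}_T}$ for $0\le b'<b$; and the transfer principle: any $u\in X^{0,b}$ writes $u(t)=\int_\RR e^{it\tau}S(t)g_\tau\,d\tau$ with $\int_\RR\langle\tau\rangle^{2b}\|g_\tau\|_{L^2}^2\,d\tau\simeq\|u\|_{X^{0,b}}^2$, so $\int_\RR\|g_\tau\|_{L^2}\,d\tau\lesssim\|u\|_{X^{0,b}}$ when $b>\tfrac12$. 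Here $\chi\in\calC^\infty_0(\RR)$ is a fixed cutoff equal to $1$ near $0$.

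Fix $s>0$, pick $\e\in(0,\tfrac12)$ small (to be constrained in terms of $s$ below) and set $b=\tfrac12+\e$, $X_T=X^{s,b}_T$. From the Duhamel formulation $u=\chi(t)S(t)u_0-i\chi(t)\int_0^tS(t-s)\big(G(|u|^2)u\big)(s)\,ds$ and the estimates above, the contraction mapping principle in a ball of $X^{s,b}_T$ --- with $T$ depending only on the $H^s$-diameter of $\calB$ --- yields all of Definition \ref{def-wellposedness}: existence and uniqueness of a fixed point $u\in X_T$; Lipschitz, hence uniform, dependence on $u_0\in\calB$ (by the difference estimate); and persistence of $H^\sigma$-regularity for $\sigma>s$ (a standard a priori bound in $X^{\sigma,b}_T$ for the same $T$). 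All this follows once one has the trilinear estimate
\begin{equation}\label{pl-tri}\big\|G(u_1\bar u_2)u_3\big\|_{X^{s,b-1}}\ \lesssim\ \prod_{j=1}^3\|u_j\|_{X^{s,b}}\end{equation}
together with its variants carrying mixed regularity indices (needed for (ii) and for $G(|u|^2)u-G(|v|^2)v$, which is trilinear in $u-v,u,v$).

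It remains to deduce \fref{pl-tri} from \fref{quadri}. By duality, $\|G(u_1\bar u_2)u_3\|_{X^{s,b-1}}=\sup\big|\int_\RR\int_{\SS^2}G(u_1\bar u_2)u_3\,\bar u_4\,d\sigma\,dt\big|$, the supremum over $\|u_4\|_{X^{-s,1-b}}\le 1$. Decompose each factor dyadically in the spectral variable, $u_j=\sum_{N_j}u_{j,N_j}$ with $\un_{\sqrt{1-\Delta_\sigma}\in[N_j,2N_j]}u_{j,N_j}=u_{j,N_j}$; this matches \fref{local}, and $G$, a bounded Fourier multiplier on spherical harmonics that commutes with $\Delta_\sigma$, is transparent to it. For each quadruple $(N_1,N_2,N_3,N_4)$, insert $\chi$, apply the transfer principle to each of the four localized factors, and fold the four modulation variables into the phase; the space-time integral becomes a superposition over $(\tau_1,\tau_2,\tau_3,\tau_4)$ of quantities controlled by \fref{quadri} at $\tau=\tau_1-\tau_2+\tau_3-\tau_4$ (the complex conjugates occurring in $G(u\bar u)u$ do not affect the estimate). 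Applying \fref{quadri} and performing the $\tau_1,\tau_2,\tau_3$ integrations by Cauchy--Schwarz (using $b>\tfrac12$) gives, for each dyadic piece, the bound
\[
C_\e\,m(N_1,N_2,N_3,N_4)^\e\;N_1^{-s}N_2^{-s}N_3^{-s}N_4^{\,s}\;\prod_{j=1}^3\|u_{j,N_j}\|_{X^{s,b}}\ \times\ \big(\text{the }u_{4,N_4}\text{-contribution}\big).
\]

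The delicate point --- and the only place where $s>0$ is essential --- is to estimate the $u_{4,N_4}$-contribution and then sum over the dyadic parameters. The regularity index $1-b=\tfrac12-\e$ of the dual space lies \emph{below} the threshold $\tfrac12$ at which the transfer principle is effective; I would handle this by splitting $u_{4,N_4}$ once more, dyadically in its modulation $M\sim\langle\tau+k(k+1)\rangle$, at the cost of a factor $M^{\e}$ per block (Cauchy--Schwarz over a $\tau$-set of length $\lesssim M$). The parameter $M$ is constrained: for the pairing to be nonzero, $M$ is dominated by the modulation produced by the product $G(u_{1,N_1}\bar u_{2,N_2})u_{3,N_3}$, hence by either (a) the largest of the input modulations, in which case transferring that input yields a compensating gain $M^{-\e}$, or (b) a resonance-type quantity $\lesssim N_{\max}\cdot(\text{the two smaller frequencies})$. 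After summing in $M$, one is left with a finite family of dyadic sums over $(N_1,N_2,N_3,N_4)$ in which the paraproduct structure forces $N_4\lesssim\max(N_1,N_2,N_3)$, so the factor $N_4^{\,s}$ is absorbed by the largest of the $N_j^{-s}$, while $m(N_1,N_2,N_3,N_4)^\e$ --- the $\e$-power of the second smallest frequency --- is absorbed using $\e\ll s$; the resulting series converges geometrically and yields \fref{pl-tri}. I expect this final balancing of the $m^\e$ loss from \fref{quadri}, the sub-threshold loss at the dual factor, and the $N_j^{\pm s}$ weights to be the main obstacle in this (otherwise soft) step --- and it is precisely what breaks down at $s=0$, in accordance with Theorem \ref{theo2}.
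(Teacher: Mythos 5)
Your overall architecture (Bourgain spaces $X^{s,b}$, Duhamel contraction, duality, dyadic spectral decomposition, transfer of \fref{quadri} from free solutions to $X^{0,b}$ functions with $b>\frac12$) coincides with the paper's, but the one step you yourself single out as delicate --- the dual factor $u_4$, whose modulation index lies below the threshold $\frac12$ at which the transfer principle operates --- is exactly where your argument does not close. In your case (a), converting the block of $u_4$ at modulation $M$ from $X^{0,b'}$ ($b'<\frac12$; in your normalization $b'=1-b$) to $X^{0,b''}$ with $b''>\frac12$ costs $M^{b''-b'}$, while the compensating gain from re-transferring the input carrying the largest modulation at the lower exponent $b''$ is at most $M^{-(b-b'')}$; the net exponent is $2b''-b-b'$, and a gain would require $b''<\frac{b+b'}{2}$. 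Since the Duhamel estimate forces $b+b'\leq 1$ (indeed $b<1-b'$ to produce the factor $T^{1-b-b'}$ needed for the contraction), one has $\frac{b+b'}{2}\leq\frac12<b''$, so each dyadic block carries a net positive power of $M$ and the sum over $M$ diverges (even with your stated bookkeeping, cost $M^{\eps}$ against gain $M^{-\eps}$, the dyadic sum is divergent). Your case (b) rests on a reversed inequality: the resonance relation bounds $|n_1(n_1+1)-n_2(n_2+1)+n_3(n_3+1)-n_4(n_4+1)|$ by the \emph{largest} modulation, it does not bound the modulation $M$ of $u_4$ from above by any resonance quantity, so in the regime where $M$ dominates there is no upper bound on $M$ and no gain available from the hypothesis \fref{quadri} alone; closing that case would require an additional estimate (e.g.\ a bilinear Strichartz-type bound with the high-modulation factor placed in $L^2_{t,x}$) which you neither state nor derive from \fref{quadri}.

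The paper avoids this modulation analysis entirely, and this is the idea missing from your proposal. In Lemma \ref{reform} it first proves the elementary companion estimate \fref{first}: by H\"older, the bounds \fref{es1}--\fref{es2} on $G$, the spectral-localization Sobolev bounds $\|u_i\|_{L^4(\SS^2)}\leq CN_i^{1/2}\|u_i\|_{L^2}$ and $\|u_i\|_{L^8(\SS^2)}\leq CN_i^{3/4}\|u_i\|_{L^2}$, and the one-dimensional embedding $H^{1/4}(\RR)\subset L^4(\RR)$ applied to $S(-t)u_j$, one obtains the quadrilinear bound with the crude loss $m(N_1,N_2,N_3,N_4)^{1/2}$ but with all four factors measured only in $X^{0,1/4}$. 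Interpolating this with the transferred hypothesis \fref{second} (all factors in $X^{0,b}$, $b>\frac12$, loss $m^{\eps}$) yields \fref{quadri2}: for every $s>0$ there is some $b'<\frac12$ with loss $m^{s}$, and since $b'<\frac12<b$ this single estimate serves simultaneously the dual factor $u_4\in X^{-s,b'}$ and the three others; the dyadic summation then closes as you describe, after the orthogonality remark that terms with $N_4>4\max(N_1,N_2,N_3)$ vanish, which the paper actually proves via the identity $G=(1-4\Delta_\sigma)^{-1/2}$ rather than asserting it. Without this interpolation with a sub-threshold, spectrally localized estimate (or some genuinely new high-modulation input), your reduction of \fref{cl1uuu} to \fref{quadri} does not go through, so the proof as proposed has a real gap at its central step.
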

\begin{proof}
We follow the steps of the proof of Theorem 1 in \cite{BGT2}, see also \cite{BGT4,GP}. Proposition \ref{theo-quadri} will be proved thanks to a fixed point procedure formulated in the Bourgain spaces $X^{s,b}$.

\bs
\ni
{\em Step 1. Reformulation of the problem in the Bourgain spaces}.

\ms
\ni
Following the definitions in \cite{bourgain} and \cite{BGT4}, we introduce the following family of Hilbert spaces
\begin{equation}
\label{xsb}
X^{s,b}=\left\{u\in {\mathcal S}'(\RR\times \SS^2)\,:\,\,\left(1+|i\pa_t+\Delta_\sigma|^2\right)^{b/2}(1-\Delta_\sigma)^{s/2}u\in L^2(\RR\times \SS^2)\right\},
\end{equation}
for $s,b\in \RR$. For any $T>0$, we also denote by $X^{s,b}_T$ the space of restrictions of elements of $X^{s,b}$ to $(-T,T)\times \SS^2$. We gather in the following lemma some interesting properties of these spaces.
\begin{lemma}
\label{lem-bourgain}
The Bourgain space $X^{s,b}$ satisfies the following properties.
\begin{enumerate}
\item $\forall f\in H^s(\SS^2)$, $\forall b>0$, the function $S(t)f$ belongs to $X^{s,b}$.
\item $\forall b>\frac{1}{2}$, $\quad X^{s,b}\hookrightarrow {\calC}(\RR,H^s(\SS^2))$.
\item Let $b,b'$ such that $0\leq b'<\frac{1}{2}$, $0\leq b<1-b'$. There exists $C>0$ such that, if $T\in (0,1]$ and $w(t)=\int_0^tS(t-\tau)f(\tau)d\tau$, then
$$\|w\|_{X^{s,b}_T}\leq CT^{1-b-b'}\|f\|_{X^{s,-b'}_T}\,.$$
\end{enumerate}
\end{lemma}
\begin{proof}
The first property (i) stems directly from the definition \fref{xsb} of $X^{s,b}$.
Remarking that 
\begin{equation}
\label{sobo}
\|u\|_{X^{s,b}}=\|v\|_{H^b(\RR, H^s(\SS^2))}, \qquad \mbox{where }v(t)=S(-t)u(t),\end{equation}
the second statement (ii) is obvious and the last statement (iii) appears to be a consequence of the following elementary result, proved e.g. in \cite{ginibre}:
$$\mbox{if }g(t)=\int_0^th(\tau)d\tau\quad \mbox{then}\quad \|g\|_{H^b(-T,T)}\leq CT^{1-b-b'}\|f\|_{H^{-b'}(-T,T)}.$$
\end{proof}
As a consequence of this lemma, it is easy to prove by a standard contraction argument that the Cauchy problem \fref{sp} is locally uniformly well-posed in $H^s(\SS^2)$, $s>0$, as soon as the following result holds true:
\begin{lemma}
\label{claim}
Assume that the quadrilinear estimate of Proposition \pref{theo-quadri} holds true. Then for all $s>0$, one can find $b,b'$ satisfying $0<b'<1/2<b<1-b'$ and such that we have the estimate
\begin{equation}
  \label{cl1uuu}
  \|G(u_1u_2)u_3\|_{X^{s,-b'}}\leq C\|u_1\|_{X^{s,b}}\|u_2\|_{X^{s,b}}\|u_3\|_{X^{s,b}},
\end{equation}
\begin{equation}
  \label{cl2}
  \|G(|u|^2)u\|_{X^{\sigma,-b'}}\leq C\|u\|_{X^{s,b}}^2\|u\|_{X^{\sigma,b}}\quad \mbox{for }\sigma\geq s.
\end{equation}
\end{lemma}
The end of this section consists in the proof of this lemma.

\bs
\ni
{\em Step 2. Quadrilinear estimate in $X^{s,b}$}.

\ms
\ni
Let us first give without proof two estimates on the operator $G$. These results can be obtained straightforwardly using Hardy-Littlewood-Sobolev inequalities in dimension 2.
\begin{lemma}
\label{lemmeG}
The operator $G$ defined by \fref{G}satisfies the following estimates:
\begin{equation}
  \label{es1}
  \left\|G(f)\right\|_{L^{q}(\SS^2)}\leq C\,\|f\|_{L^p(\SS^2)},\qquad \mbox{for }1<p<2\mbox{ and }\frac{1}{q}=\frac{1}{p}-\frac{1}{2},
\end{equation}
\begin{equation}
  \label{es2}
  \left\|G(f)\right\|_{L^\infty(\SS^2)}\leq C\,\|f\|_{L^p(\SS^2)}^\theta\,\|f\|_{L^1(\SS^2)}^{1-\theta},\qquad \mbox{for }p>2\mbox{ and }\theta=\frac{p}{2p-2}.
\end{equation}
\end{lemma}
We now reformulate the quadrilinear estimate \fref{quadri} in the $X^{s,b}$ spaces.
\begin{lemma}
\label{reform}
Under the same assumption as in Proposition \pref{theo-quadri}, let $u_1$, $u_2$, $u_3$, $u_4$ satisfy
 \begin{equation}
    \label{local2}
    \un_{\sqrt{1-\Delta_\sigma}\in [N_j, 2N_j]}(u_j)=u_j,\qquad j=1,2,3,4.
  \end{equation}
Then, for every $s>0$, there exists $0<b'<1/2$ such that
\begin{equation}
  \label{quadri2}
\left|\int_\RR \int_{\SS^2} G(u_1u_2)u_3u_4\,d\sigma dt\right| \leq C\,m(N_1,N_2,N_3,N_4)^s\prod_{i=1}^4 \|u_j\|_{X^{0,b'}}.
\end{equation}
\end{lemma}
\begin{proof}
The desired estimate \fref{quadri2} can be obtained by interpolation between the two following inequalities:
\begin{equation}
\label{first}
\left|\int_\RR \int_{\SS^2} G(u_1u_2)u_3u_4\,d\sigma dt\right| \leq C\,m(N_1,N_2,N_3,N_4)^{1/2}\prod_{i=1}^4 \|u_j\|_{X^{0,1/4}}
\end{equation}
and, for any $b>1/2$ and $0<\eps<1/2$,
\begin{equation}
\label{second}
\left|\int_\RR \int_{\SS^2} G(u_1u_2)u_3u_4\,d\sigma dt\right| \leq C_\eps\,m(N_1,N_2,N_3,N_4)^\eps\prod_{i=1}^4 \|u_j\|_{X^{0,b}}\,.
\end{equation}

Let us prove the first estimate \fref{first}.
By symmetry and self-adjointness of $G$, only two cases have to be considered: $m(N_1,N_2,N_3,N_4)=N_1N_2$ and $m(N_1,N_2,N_3,N_4)=N_1N_3$. In the first case, we deduce from H\"older and from \fref{es2} that
$$\begin{array}{ll}
\ds \left|\int_\RR \int_{\SS^2} G(u_1u_2)u_3u_4\,d\sigma dt\right| \leq \|G(u_1u_2)\|_{L^2(\RR,L^\infty(\SS^2))}\|u_3u_4\|_{L^2(\RR,L^1(\SS^2))}\\[3mm]
\ds \qquad \leq C\|u_1u_2\|_{L^2(\RR,L^4(\SS^2))}^{2/3}\|u_1u_2\|_{L^2(\RR,L^1(\SS^2))}^{1/3}\|u_3u_4\|_{L^2(\RR,L^1(\SS^2))}\\[3mm]
\ds \qquad \leq C N_1^{1/2}N_2^{1/2}\prod_{i=1}^4 \|u_j\|_{L^4(\RR,L^2(\SS^2))}\,,
\end{array}
$$
since $\|u_i\|_{L^8(\SS^2)}\leq CN_i^{3/4}\|u_i\|_{L^2(\SS^2)}$ due to the spectral localization. Hence, to get \fref{first} in this case, it suffices to remark that
$$\|u_j\|_{L^4(\RR,L^2(\SS^2))}=\|S(-t)u_j\|_{L^4(\RR,L^2(\SS^2))}\leq C\|S(-t)u_j\|_{H^{1/4}(\RR,L^2(\SS^2))}=C\|u_j\|_{X^{0,1/4}},$$
where we used the isometric action of $S(t)$,  a Sobolev embedding in dimension one and \fref{sobo}.

The second case $m(N_1,N_2,N_3,N_4)=N_1N_3$ can be treated as follows. Using \fref{es1}, we have
$$\begin{array}{ll}
\ds \left|\int_\RR \int_{\SS^2} G(u_1u_2)u_3u_4\,d\sigma dt\right| \leq \|G(u_1u_2)\|_{L^2(\RR,L^4(\SS^2))}\|u_3u_4\|_{L^2(\RR,L^{4/3}(\SS^2))}\\[3mm]
\ds \qquad \leq C\|u_1u_2\|_{L^2(\RR,L^{4/3}(\SS^2))}\|u_3u_4\|_{L^2(\RR,L^{4/3}(\SS^2))}\\[3mm]
\ds \qquad \leq C\|u_1\|_{L^4(\RR,L^4(\SS^2))}\|u_2\|_{L^4(\RR,L^2(\SS^2))}\|u_3\|_{L^4(\RR,L^4(\SS^2))}\|u_4\|_{L^4(\RR,L^2(\SS^2))}\\[3mm]
\ds \qquad \leq C N_1^{1/2}N_3^{1/2}\prod_{i=1}^4 \|u_j\|_{L^4(\RR,L^2(\SS^2))}\leq C N_1^{1/2}N_2^{1/2}\prod_{i=1}^4 \|u_j\|_{X^{0,1/4}}\,,
\end{array}
$$
where we used $\|u_i\|_{L^4(\SS^2)}\leq CN_i^{1/2}\|u_i\|_{L^2(\SS^2)}$.

We now sketch the proof of the second estimate \fref{second}, for all $b>1/2$, which is directly inspired from \cite{BGT4,GP}. Let us start by proving in a first step the result for the special case where the $u_j$ are supported in time in an interval of size 1, say $(0,1)$ after translation. Let $v_j(t)=S(-t)u_j(t)$ and let $f_j(\tau)=\hat{v}_j(\tau)$ denote the Fourier transform in time of $v_j$. Applying the inverse Fourier transform, we get
$$\begin{array}{l}
\ds \int_{\SS^2} G(u_1u_2)u_3u_4\,d\sigma=\frac{1}{(2\pi)^2}\intquad_{\RR^4}e^{it(\tau_1+\tau_2+\tau_3+\tau_4)}\times\\[3mm]
\ds \qquad \times G(S(t)f_1(\tau_1)S(t)f_2(\tau_2))S(t)f_3(\tau_3)S(t)f_4(\tau_4)d\sigma d\tau_1d\tau_2d\tau_3d\tau_4.
\end{array}$$
Hence, the quadrilinear estimate \fref{quadri}, applied pointwise in $\tau_1$, $\tau_2$, $\tau_3$, $\tau_4$ and after having chosen $\chi\in {\mathcal C}^\infty_0(\RR)$ such that $\chi=1$ on $(0,1)$, gives
$$\begin{array}{ll}
\ds \left|\int_\RR \int_{\SS^2} G(u_1u_2)u_3u_4\,d\sigma dt\right|
&\ds \leq C_\eps\,m(N_1,N_2,N_3,N_4)^\eps \prod_{i=1}^4 \int_\RR \|\hat{v}_j\|_{L^2(\SS^2)}(\tau)d\tau\\
&\ds \leq C_\eps\,m(N_1,N_2,N_3,N_4)^\eps\prod_{i=1}^4 \|\langle \tau\rangle^b\hat{v}_j\|_{L^2(\RR\times\SS^2)}\\
&\ds \quad =C_\eps\,m(N_1,N_2,N_3,N_4)^\eps \prod_{i=1}^4 \|u_j\|_{X^{0,b}}\,,
\end{array}
$$
where the Cauchy-Schwarz inequality could be applied since $b>1/2$. 

To treat the general case, it suffices now to introduce a function $\psi\in {\mathcal C}^\infty_0(\RR)$ supported in $(0,1)$ such that $\sum_{n\in \ZZ}\psi(t-\frac{n}{2})=1$, to decompose $u_j=\sum_{n\in \ZZ}u_{j,n}$, with $u_{j,n}=\psi(t-\frac{n}{2})u_j(t)$, $j=1,2,3,4$, and to apply the result of the first step to the functions $u_{j,n}$. The conclusion follows from the almost orthogonality property satisfied by the Bourgain spaces
$$\sum_{n\in\ZZ}\|u_{j,n}\|_{X^{0,b}}^2\leq C\|u_{j}\|_{X^{0,b}}^2\,.$$
\end{proof}

\bs
\ni
{\em Step 3. Proof of Lemma \pref{claim} by dyadic decomposition}. 
\nopagebreak

\ms
\ni
We have now the tools to prove Lemma \ref{claim}. We shall only prove \fref{cl1uuu}, the proof of \fref{cl2} being similar. By duality, we have to show that, for all $u_4\in X^{-s,b'}$,
\begin{equation}
  \label{duality}
  \left|\int_\RR\int_{\SS^2} G(u_1u_2)u_3u_4d\sigma dt\right|\leq C\left(\prod_{j=1}^3\|u_j\|_{X^{s,b}}\right)\|u_4\|_{X^{-s,b'}}.
\end{equation}
Let us perform a dyadic decomposition of the $u_j$'s, setting $u_{j,N}=\un_{\sqrt{1-\Delta_\sigma}\in [N,2N]}(u_j)$. We have
\begin{equation}
  \label{eq}
\|u_j\|_{X^{s,b}}^2\simeq \sum_{N}N^{2s}\|u_{j,N}\|_{X^{0,b}}^2\simeq \sum_{N}\|u_{j,N}\|_{X^{s,b}}^2
\end{equation}
where $N$ are dyadic integers. Proving \fref{duality} is equivalent to estimating the sum
$$\sum_{N_1,N_2,N_3,N_4}J(N_1,N_2,N_3,N_4),$$
where
$$J(N_1,N_2,N_3,N_4)=\int_\RR\int_{\SS^2} G(u_{1,N_1}u_{2,N_2})u_{3,N_3}u_{4,N_4}d\sigma dt.$$

We now remark that in the above sum, all the terms with $N_4>4\max(N_1,N_2,N_3)$ are zero. To prove this point, we first claim that the operator $G$ is a function of $\Delta_\sigma$, more precisely, that we have for all $f$
\begin{equation}
  \label{equivG}
  G(f)=\frac{1}{4\pi}\int_{\SS^2} \frac{1}{|x-y|}f(y)d\sigma(y)=(1-4\Delta_\sigma)^{-1/2}(f).
\end{equation}
The proof of the claim is done below. Moreover, from spectral localization  $u_{j,N_j}$ is the sum of spherical harmonics of degrees between $\sqrt{N_j/2}$ and $\sqrt{2N_j}$. One can deduce from properties of products of spherical harmonics that the product $G(u_{1,N_1}u_{2,N_2})u_{3,N_3}$ is a sum of spherical harmonics of degrees less than $\sqrt{2\max(N_1,N_2,N_3)}$, so orthogonal to $u_4$ if $N_4>4\max(N_1,N_2,N_3)$.

Hence, by symmetry, it suffices to consider the terms with $N_1\leq N_2\leq N_3$ and $N_4\leq 4N_3$. Pick $s'$ such that $0<s'<s$. By Lemma \ref{reform}, there exists $0<b'<1/2$ such that
$$\begin{array}{l}
\ds \sum_{N_1,N_2,N_3,N_4}J(N_1,N_2,N_3,N_4)\leq C\sum_{N_1,N_2,N_3}\sum_{N_4\leq 4N_3} N_1^{s'}N_2^{s'}\prod_{i=1}^4 \|u_{j,N_j}\|_{X^{0,b'}} ,\\
\ds \quad \leq C\left(\prod_{j=1,2}\sum_{N_j}N_j^{s'-s}\|u_{j,N_j}\|_{X^{s,b'}}\right)\sum_{N_3}\sum_{N_4\leq 4N_3} \left(\frac{N_3}{N_4}\right)^{-s}\|u_{3,N_3}\|_{X^{s,b'}}\|u_{4,N_4}\|_{X^{-s,b'}} 
\end{array}$$
where we used \fref{eq}. The series in $N_1$ and $N_2$ converges easily. Denoting $N_4=2^n$ and $N_3/N_4=2^m$, the series in $N_3$, $N_4$ can be bounded by a simple Cauchy-Schwarz argument. Indeed, this series reads
$$\begin{array}{l}
\ds \sum_{m \geq -2}2^{-ms}\sum_{n\geq \max(0,-m)}\|u_{3,2^{m+n}}\|_{X^{s,b'}}\|u_{4,2^n}\|_{X^{-s,b'}}\\
\ds \qquad \qquad \leq
\left(\sum_{m \geq -2}2^{-ms}\right)\left(\sum_{n}\|u_{3,2^n}\|_{X^{s,b'}}^2\right)^{1/2}\left(\sum_{n}\|u_{4,2^n}\|_{X^{-s,b'}}^2\right)^{1/2}\\[6mm]
\ds \qquad \qquad \leq C \|u_3\|_{X^{s,b'}}\|u_4\|_{X^{-s,b'}}\,.
\end{array}$$
The proof of Lemma \ref{claim} is complete, so Proposition \ref{theo-quadri} is proved.

\ms
\ni
{\em Proof of the claim \fref{equivG}.} We use a very old argument that can be found, e.g., in Poincar\'e's works \cite{poincare}. Let us compute explicitely the action of $G$ on a spherical harmonic $Y^m_\ell$. We define the following function on $\RR^3$:
$$u(r,\sigma)=r^\ell Y^m_\ell(\sigma)\mbox{ for }r\leq 1,\qquad u(r,\sigma)=r^{-\ell-1}Y^m_\ell(\sigma)\mbox{ for }r\geq 1.$$
It is readily seen that $u$ is harmonic on $\RR^3\setminus\SS^2$, so by computing explicitely the jump of $\pa_r u$ on $\SS^2$ we obtain
$$-\Delta u=(2\ell+1)Y^m_\ell d\sigma$$
in the distribution sense, where $d\sigma$ is the surface measure on $\SS^2$. Since $u$ is decreasing at the infinity, this means that
$$u(r,\sigma)=\frac{1}{4\pi}\int_{\SS^2}\frac{1}{|r\sigma-\sigma'|}(2\ell+1)Y^m_\ell d\sigma',$$
thus
$$u(1,\sigma)=(2\ell+1)G(Y^m_\ell).$$
Denoting $\lambda=\ell(\ell+1)$, we have $2\ell+1=\sqrt{1+4\lambda}$, so we have
$$G(Y^m_\ell)=\frac{1}{\sqrt{1+4\lambda}}Y^m_\ell$$
and \fref{equivG} is proved.
\end{proof}

\subsection{Quadrilinear estimate}
\label{sect-quadri}
In this section, we prove the quadrilinear estimate \fref{quadri} which is, thanks to Proposition \ref{theo-quadri}, the core of the proof of local existence result in Theorem \ref{theo1}. The main result of this section is the following proposition.
\begin{proposition}
\label{prop-quadri}
There exists a constant $C>0$ such that for any $f_1$, $f_2$, $f_3$, $f_4 \in L^2(\SS^2)$ satisfying
  \begin{equation}
    \label{local3}
    \un_{\sqrt{1-\Delta_\sigma}\in [N_j, 2N_j]}(f_j)=f_j,\qquad j=1,2,3,4,
  \end{equation}
and for all $\eps\in ]0,\frac{1}{2}[$ one has the following quadrilinear estimate
\begin{equation}
  \label{quadri3}
\ds  \sup_{\tau\in \RR}\left|\int_\RR \int_{\SS^2}\chi(t)e^{it\tau}G(u_1u_2)u_3u_4\,d\sigma dt\right| \leq C_\eps\,(m(N_1,N_2,N_3,N_4))^\eps\prod_{i=1}^4 \|f_j\|_{L^2(\SS^2)}
\end{equation}
where $u_j(t)=S(t) f_j$ for $j=1,2,3,4$, $S(t)=e^{it\Delta_\sigma}$ denoting the free evolution, and $\chi\in \calC^\infty_0(\RR)$ is arbitrary. 
\end{proposition}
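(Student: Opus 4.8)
The plan is to use the identity \fref{equivG}, which realizes $G$ as the smoothing operator $(1-4\Delta_\sigma)^{-1/2}$, to decompose the two products $u_1u_2$ and $u_3u_4$ into dyadic frequency blocks, and then to feed in bilinear Strichartz estimates for the free evolution $S(t)$ on $\SS^2$; the gain of one derivative carried by $G$ is precisely what turns the loss in those estimates into the $\eps$-loss claimed here. Concretely, write $\Pi_L=\un_{\sqrt{1-\Delta_\sigma}\in[L,2L]}$ for $L$ dyadic. By \fref{equivG}, $G$ is a function of $\Delta_\sigma$, so it commutes with $\Pi_L$ and acts on its range as multiplication by a symbol of size $\simeq L^{-1}$; moreover $\int_{\SS^2}\Pi_L h\cdot g\,d\sigma=\int_{\SS^2}\Pi_L h\cdot\Pi_L g\,d\sigma$ vanishes unless the two dyadic blocks are comparable. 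Inserting $1=\sum_L\Pi_L$ in front of $G(u_1u_2)$, applying Cauchy--Schwarz on $\SS^2$ in each block and then Cauchy--Schwarz in $t$ with $|\chi|\leq C\,\un_I$ (which absorbs the factor $e^{it\tau}$ and makes the bound uniform in $\tau$), I expect to be reduced to estimating
$$\sum_{L}L^{-1}\,\left\|\Pi_L(u_1u_2)\right\|_{L^2(I\times\SS^2)}\,\left\|\Pi_L(u_3u_4)\right\|_{L^2(I\times\SS^2)},$$
$L$ dyadic and $I$ a fixed interval containing $\mathrm{supp}\,\chi$. The sum runs only over the $L$ for which both blocks are nonzero; since a product of spherical harmonics of degrees $\ell,\ell'$ only carries degrees in $[|\ell-\ell'|,\ell+\ell']$, this forces $L\lesssim\min(N_1+N_2,N_3+N_4)$ and, as already observed before the statement, makes the whole expression vanish unless the $N_j$ are balanced, e.g.\ $N_4\lesssim\max(N_1,N_2,N_3)$.

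The analytic heart is then the bilinear estimate on $\SS^2$: for $u_i=S(t)f_i$, $u_j=S(t)f_j$ spectrally localized at $N_i,N_j$,
$$\left\|\Pi_L(u_iu_j)\right\|_{L^2(I\times\SS^2)}\lesssim\left(\min(L,N_i,N_j)\right)^{1/2}\,\|f_i\|_{L^2}\,\|f_j\|_{L^2}$$
(up to harmless logarithms). For $L\gtrsim\min(N_i,N_j)$ this is a localized form of the bilinear Strichartz estimate of Burq--G\'erard--Tzvetkov; the new point is the gain $L^{1/2}$ for a small output frequency, i.e.\ the ``high--high to low'' regime $L\ll N_i\sim N_j$. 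I would get it by expanding in spherical harmonics, $u_i=\sum_{\ell_i}e^{-it\ell_i(\ell_i+1)}\Pi_{\ell_i}f_i$ and similarly for $j$, and squaring the $L^2(I\times\SS^2)$ norm: because the eigenvalues $\ell(\ell+1)$ are integers, after the time integration the non-resonant quadruples of degrees contribute only an $O(1/|\Delta\phi|)$ factor and the resonant ones, $\ell_i(\ell_i+1)+\ell_j(\ell_j+1)=\ell_i'(\ell_i'+1)+\ell_j'(\ell_j'+1)$, dominate; since $\Pi_L$ confines $\ell_j$ to a band of width $\lesssim L$ around $\ell_i$, counting the solutions of this Diophantine relation (using injectivity of $\ell\mapsto\ell(\ell+1)$) together with the bilinear eigenfunction estimates on $\SS^2$ should produce the factor $L^{1/2}$.

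Feeding this into the sum of the first step reduces the proposition to bounding
$$\sum_{L}L^{-1}\left(\min(L,N_1,N_2)\right)^{1/2}\left(\min(L,N_3,N_4)\right)^{1/2}.$$
A short case discussion on the position of $L$ relative to $\min(N_1,N_2)$ and $\min(N_3,N_4)$ shows that each dyadic scale up to $\min_j N_j$ contributes $O(1)$ while the scales above form convergent geometric series; hence the sum is $O(\log(2+m(N_1,N_2,N_3,N_4)))$, which is $\lesssim_\eps(m(N_1,N_2,N_3,N_4))^\eps$ for every $\eps>0$. Combined with $\prod_j\|f_j\|_{L^2}$ this is exactly \fref{quadri3}, uniformly in $\tau$ and for an arbitrary $\chi\in\calC^\infty_0(\RR)$.

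The hard part is the bilinear estimate of the second step, with its high--high to low gain; everything else --- the reduction via $G=(1-4\Delta_\sigma)^{-1/2}$, the orthogonality of dyadic output blocks, and the final summation --- is routine. It is precisely the interplay of this gain $L^{1/2}$ with the weight $L^{-1}$ coming from the Poisson kernel that replaces the loss $m^{1/2}$ available for a cubic nonlinearity (and responsible there for the threshold $s>1/4$) by the loss $m^\eps$ needed to reach $s>0$; arranging the Diophantine counting on the dyadic blocks so that it combines with the bilinear eigenfunction bounds leaving no power of the large frequencies is the delicate point.
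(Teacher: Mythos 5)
Your reduction to $\sum_L L^{-1}\|\Pi_L(u_1u_2)\|_{L^2(I\times\SS^2)}\|\Pi_L(u_3u_4)\|_{L^2(I\times\SS^2)}$ is sound (the gain $L^{-1}$ from $G=(1-4\Delta_\sigma)^{-1/2}$ and the pairing of comparable output blocks are correct), and the final dyadic summation is arithmetic. But the whole proof rests on the localized bilinear estimate $\|\Pi_L(u_iu_j)\|_{L^2(I\times\SS^2)}\lesssim(\min(L,N_i,N_j))^{1/2}\|f_i\|_{L^2}\|f_j\|_{L^2}$, in particular its high--high to low case $L\ll N_i\sim N_j$, and this is exactly where the proposal has a genuine gap: it is not proved, and the sketched mechanism does not obviously deliver it \emph{uniformly in the high frequency}, which is what the Proposition requires (the loss must be $m(N_1,N_2,N_3,N_4)^\eps$, i.e.\ only on the two smallest frequencies). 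Writing $d=\ell_i-\ell_j$, $s=\ell_i+\ell_j+1$, the resonance relation becomes $ds=c$ with $|d|\lesssim L$, $s\sim N$, and the natural count of solutions for fixed $c$ is only $\min(L,d(|c|))$ with $|c|$ as large as $LN$; the divisor bound therefore costs a factor $N^\eps$ in the \emph{largest} frequency (not allowed by the statement: take $N_1=N_2=N\to\infty$, $N_3=N_4=1$, so $m=O(1)$), while the trivial bound $L$ only yields $\|\Pi_L(u_iu_j)\|_{L^2}\lesssim L^{3/4}$, which after summation gives $m^{1/4}$ instead of $m^\eps$. Moreover the fully resonant class $c=0$ contains the whole diagonal $\ell_i'=\ell_j'$, of cardinality $\sim N$, so the crude Schur/counting step blows up there and this part needs a separate argument (it can be handled by the bilinear eigenfunction estimate with output localization, giving $L^{1/4}$, but that is not in your sketch). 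So the "delicate point" you flag at the end is precisely the missing proof, and the route you indicate for it does not close as described.

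It is worth contrasting this with how the paper avoids the difficulty. There, no decomposition of the \emph{output} frequency is made: one fixes the four harmonic degrees $n_j$, bounds the quadrilinear eigenfunction integral $I(n_1,\dots,n_4)$ directly by $C_\eps(N_{\min})^\eps\prod\|H^j_{n_j}\|_{L^2}$ via a microlocal partition and one-dimensional semiclassical Strichartz estimates in adapted coordinates, and then performs the Diophantine counting coming from the $\hat\chi$ weight by pairing \emph{each of the two smallest} frequencies with one of the two largest: in the relation $\eps_in_i(n_i+1)+\eps_jn_j(n_j+1)=k$ the counting lemma loses only an $\eps$-power of the dyadic range of the \emph{small} variable, so the total loss is $(N_\alpha N_\beta)^\eps=m^\eps$. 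Your scheme has no analogous mechanism transferring all losses onto the small frequencies in the high--high to low interactions; unless you can prove your localized bilinear estimate with at most an $L^{C\eps}$ loss and no dependence on $N_{\max}$ (a statement that is itself nontrivial and not in the BGT literature as such), the argument does not prove the Proposition as stated.
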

\begin{proof}
Let us decompose the $f_j$'s on spherical harmonics:
$$f_j=\sum_{n_j=N_j/2}^{2N_j}H_{n_j}^j\qquad \mbox{with}\qquad \Delta_\sigma H_{n_j}^j+n_j(n_j+1) H_{n_j}^j=0,$$
so that
$$u_j(t)=\sum_{n_j=N_j/2}^{2N_j}e^{-itn_j(n_j+1)} H_{n_j}^j\,.$$
The quantity to estimate can be written as
$$\begin{array}{l}
\ds \int_\RR \int_{\SS^2}\chi(t)e^{it\tau}G(u_1u_2)u_3u_4\,d\sigma dt=\sum_{n_1,n_2,n_3,n_4}\omega_{n_1,n_2,n_3,n_4}(\tau)I(n_1,n_2,n_3,n_4)
\end{array}
$$
where
$$\omega_{n_1,n_2,n_3,n_4}(\tau)=\hat{\chi}\left(\tau-\sum_{j=1}^4\eps_j n_j(n_j+1)\right)\quad \mbox{with }\eps_j=(-1)^{j+1},$$
$\hat{\chi}$ being the Fourier transform of $\chi$,
and
$$I(n_1,n_2,n_3,n_4)=\int_{\SS^2}G(H_{n_1}^1H_{n_2}^2)H_{n_3}^3H_{n_4}^4d\sigma\,.$$

By symmetry and self-adjointness of $G$, assume for instance that $N_3=\min(N_1,N_2,N_3,N_4)$. Let us estimate $I(n_1,n_2,n_3,n_4)$ by adapting the proof of multilinear estimates in \cite{BGT2,BGT3,BGT4,GP}. 
One can decompose $H_{n_j}^j$ using a microlocal partition of the unity with semiclassical cut-off of the form $\chi_j(x,h_jD)$ (with small support):
$$H_{n_j}^j=\varphi_j+\sum_k\chi_j^k(x,h_jD)H_{n_j}^j,$$
where $\varphi_j$ is regular (for all $s$, $\||D^s|\varphi_j\|_{L^2}\leq C_s \|H_{n_j}^j\|_{L^2}$) and where the sum is finite.
Thus, we only have to estimate some terms of the form
$$\tilde I=\int_{\SS^2}G(V_1V_2)V_3V_4d\sigma,$$
where $V_j$ is of the form $V_j=\chi_j(x,h_jD)H_{n_j}^j$.
The key point, from Burq, G\'erard, Tzvetkov \cite{BGT2,BGT3,BGT4} is that, using semiclassical Strichartz estimates in dimension one, for each choice of $(\chi_j)_{j=1,2,3,4}$ one can find a local system of linear coordinates $(x,y)$ such that, for all $p,q\in [2,\infty]$ satisfying $\frac{2}{p}+\frac{1}{q}=\frac{1}{2}$, we have
\be
\label{stri}
\|V_j\|_{L^p_xL^q_y}\leq CN_j^{1/p}\|H_{n_j}^j\|_{L^2(\SS^2)}, \qquad \mbox{for }j=1,2,3,4.
\ee
The proof of this result uses the fact that the function $H_{n_j}^j$ satisfies
$$h_j^2 \Delta_\sigma H_{n_j}^j+H_{n_j}^j=0, \qquad \mbox{with }h_j^2=\frac{1}{n_j(n_j+1)}\sim N_j^2,$$
which can be locally seen as an evolution equation where the well-chosen variable $x$ plays the role of a time.

In local coordinates we have
$$|\tilde I|\leq C\|G(V_1V_2)\|_{L^\infty_{x}L^{s}_{y}}\|V_3\|_{L^{1/\eps}_xL^{r}_y}\|V_4\|_{L^2_xL^2_y}$$
with $s=\frac{1}{2\eps}$ and $\frac{1}{r}=\frac{1}{2}-2\eps$. Since
$$|G(V_1V_2)|\leq C\int\frac{1}{|x-x'|+|y-y'|}\,|V_1V_2|(x',y')dx'dy',$$
a Hardy-Littlewood Sobolev inequality gives
$$\begin{array}{ll}
\ds \|G(V_1V_2)\|_{L^\infty_{x}L^{s}_{y}}
&\ds \leq C\sup_x\int \frac{1}{|x-x'|^{1-2\eps}}\,\|V_1V_2(x',\cdot)\|_{L^1}\,dx'\\[4mm]
&\ds \leq C_\eps \|V_1\|_{L^\infty_xL^2_y}\|V_2\|_{L^\infty_xL^2_y},
\end{array}$$
where we used the fact that all the functions are compactly supported. Then we apply \fref{stri} with the pairs $(p,q)=(\infty,2)$ and $(\frac{1}{\eps},r)$:
$$\begin{array}{ll}
\ds |\tilde I|&\ds \leq C_\eps \|V_1\|_{L^\infty_xL^2_y}\|V_2|_{L^\infty_xL^2_y}\|V_3\|_{L^{1/\eps}_xL^{r}_y}\|V_4\|_{L^2_xL^2_y}\\
&\ds  \leq C_\eps (N_3)^\eps \prod_{j=1}^4\|H_{n_j}^j\|_{L^2}.
\end{array}$$
To conclude, it remains to estimate uniformly with respect to $\tau$ the quantity
$$Q(\tau)=\sum_{n_1,n_2,n_3,n_4}|\omega_{n_1,n_2,n_3,n_4}(\tau)| \prod_{j=1}^4\|H_{n_j}^j\|_{L^2}$$
where, in the sum, $n_1,n_2,n_3,n_4$ satisfy
\begin{equation}
  \label{condnj}
  \forall j\in \{1,2,3,4\}\qquad \frac{N_j}{2}\leq n_j\leq 2N_j\,.
\end{equation}
This quantity has already been estimated in \cite{GP}, but we reproduce again this proof here for the sake of completeness.

Denote by $\Lambda(k)$ the set of $(n_1,n_2,n_3,n_4)$ satisfying \fref{condnj} and
$$\sum_{j=1}^4 \eps_j n_j(n_j+1)=k.$$
We deduce from the decay of $\hat{\chi}$ at the infinity that
$$\begin{array}{ll}
\ds Q(\tau)&\ds \leq C\sum_{\ell \in \ZZ}\frac{1}{1+\ell^2}\sum_{\Lambda([\tau]+\ell)}\prod_{j=1}^4\|H_{n_j}^j\|_{L^2}\\[5mm]
&\ds \leq C\sup_{k\in\ZZ}\sum_{\Lambda(k)}\prod_{j=1}^4\|H_{n_j}^j\|_{L^2}\,.
\end{array}$$
Let us denote by $\alpha,\beta$ the two indices such that $m(N_1,N_2,N_3,N_4)=(N_\alpha,N_\beta)$ and by $\gamma,\delta$ the two other indices: $\{\gamma,\delta\}=\{1,2,3,4\}\setminus\{\alpha,\beta\}$. Then we introduce the set
$$\begin{array}{r}
\ds \Gamma(k,i,j)=\left\{(n_i,n_j)\,: \;\frac{N_i}{2}\leq n_i\leq 2N_i,\;\frac{N_j}{2}\leq n_j\leq 2N_j \right.\qquad \qquad \\
\ds \left.\mbox{ and }\,\eps_in_i(n_i+1)+\eps_jn_j(n_j+1)=k\vphantom{\frac{N}{2}}\right\}.
\end{array}
$$
Now, denoting
$$S(k,i,j)=\sum_{\Gamma(k,i,j)}\|H_{n_i}^i\|_{L^2}\|H_{n_j}^j\|_{L^2},$$
we split the sum as follows:
$$Q(\tau)\leq  C \sup_{k\in\ZZ}\sum_{k'\in \ZZ}S(k,\alpha,\gamma)S(k-k',\beta,\delta).$$
Then we apply the following elementary result of number theory (see e.g. \cite{BGT2}).
\begin{lemma}
  Let $\sigma=\pm 1$. For all $\eps>0$, there exists $C_\eps>0$ such that, given $M\in \ZZ$ and $N\in\NN^*$,
$$\#\left\{(k_1,k_2)\in \NN^2:\;\;N\leq k_1\leq 2N,\;\;k_1^2+\sigma k_2^2=M\right\}\leq C_\eps N^\eps.$$
\end{lemma}
Hence we get
$$\sup_{k'}\# \Gamma(k',\alpha,\gamma)\leq C_\eps N_\alpha^\eps\,,\qquad \sup_{k,k'}\# \Gamma(k-k',\beta,\delta)\leq C_\eps N_\beta^\eps\,,$$
and by Cauchy-Schwarz
$$\begin{array}{l}
\ds \sum_{k'\in \ZZ}S(k,\alpha,\gamma)S(k-k',\beta,\delta)\leq C_\eps (N_\alpha N_\beta)^\eps\times\\
\ds \qquad \times \left(\sum_{k'}\sum_{\Gamma(k',\alpha,\gamma)}\|H_{n_\alpha}^\alpha\|_{L^2}^2\|H_{n_\gamma}^\gamma\|_{L^2}^2\right)^{1/2} \left(\sum_{k'}\sum_{\Gamma(k',\beta,\delta)}\|H_{n_\beta}^\beta\|_{L^2}^2\|H_{n_\delta}^\delta\|_{L^2}^2\right)^{1/2}\\
\ds \qquad \leq C_\eps (N_\alpha N_\beta)^\eps \prod_{j=1}^4\|f_j\|_{L^2},
\end{array}
$$
where we used the orthogonality of the spherical harmonics. The proof is complete.
\end{proof}

\subsection{An instability result in $L^2$}
In this section, we prove Theorem \ref{theo2} and exhibit a high frequency instability for the Cauchy problem in $L^2(\SS^2)$ for the Schr\"odinger-Poisson system \fref{sp}. Consider the following spherical harmonic on $\SS^2$
\begin{equation}
\label{psi}
\psi_n(x)=(x_1+ix_2)^n
\end{equation}
where $(x_1,x_2,x_3)$ are cartesian coordinates on $\RR^3$, $\SS^2=\{x_1^2+x_2^2+x_3^2=1\}$. Remark that this function concentrates on the equator $\{x_3=0\}$ as $n\to \infty$.

In the case of the cubic nonlinear Schr\"odinger equation on $\SS^d$, an instability in some $H^s$ space has been shown in \cite{BGT5} (see also \cite{banica} for a more precise result) by finding an ansatz for the solution of the equation with an initial data proportional to $\psi_n$ and of order 1 in $H^s$. A different approach has been presented in \cite{gerard} in order to exhibit the same instability. It consists in constructing a stationary solution for the nonlinear Schr\"odinger equation, rapidly oscillating in time, by minimizing the nonlinear energy of the problem. The point is to estimate precisely the corresponding nonlinear eigenvalue as $n$ goes to infinity. Let us adapt this argument to our case.

\begin{lemma}
\label{lemtech}
The function $\psi_n$ defined by \fref{psi} satisfies
\begin{equation}
  \label{estim1}
\|\psi_n\|_{L^2}^2=\frac{C_1}{\sqrt{n}}+{\mathcal O}(n^{-3/2}),\qquad
\|\na_\sigma\psi_n\|_{L^2}^2=n(n+1)\|\psi_n\|_{L^2}^2,\quad
\end{equation}
\begin{equation}
  \label{estim2}
  \int_{\SS^2} G(|\psi_n|^2)|\psi_n|^2\,d\sigma= C_2\frac{\log n}{n}+\frac{C_3}{n}+{\mathcal O}\left(\frac{1}{n^2}\right),
\end{equation}
where $C_1$, $C_2$, $C_3$ are constant real numbers.
\end{lemma}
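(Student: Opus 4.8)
The plan is to reduce everything to one-dimensional integrals via the parametrization of $\SS^2$ by the $x_3$-coordinate, and then to analyze the resulting integrals asymptotically by Laplace's method, since $|\psi_n|^2 = (x_1^2+x_2^2)^n = (1-x_3^2)^n$ concentrates near $x_3=0$ as $n\to\infty$. First I would compute $\|\psi_n\|_{L^2}^2$ directly: writing $x_3=\cos\theta$, one has $\|\psi_n\|_{L^2}^2 = 2\pi\int_{-1}^1 (1-t^2)^n\,dt$, which is an explicit Beta integral equal to $2\pi\cdot\frac{(2^n n!)^2}{(2n+1)!}$; Stirling's formula then gives the stated expansion $C_1 n^{-1/2}+\mathcal O(n^{-3/2})$ with $C_1=2\pi^{3/2}$. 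The gradient identity $\|\na_\sigma\psi_n\|_{L^2}^2=n(n+1)\|\psi_n\|_{L^2}^2$ is immediate from the fact that $\psi_n$ is a spherical harmonic of degree $n$, i.e. $-\Delta_\sigma\psi_n=n(n+1)\psi_n$, integrated by parts against $\bar\psi_n$.

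The substantive part is \fref{estim2}. Here I would use the representation \fref{equivG}, namely $G=(1-4\Delta_\sigma)^{-1/2}$, acting on the expansion of $|\psi_n|^2=(1-x_3^2)^n$ into zonal spherical harmonics (Legendre polynomials $P_\ell(x_3)$). If $(1-t^2)^n=\sum_{\ell} c_{n,\ell}P_\ell(t)$ with only even $\ell\le 2n$ occurring, then
\[
\int_{\SS^2}G(|\psi_n|^2)|\psi_n|^2\,d\sigma = 2\pi\sum_{\ell\ \mathrm{even}}\frac{2}{2\ell+1}\,\frac{c_{n,\ell}^2}{\sqrt{1+4\ell(\ell+1)}},
\]
using orthogonality and $\|P_\ell\|_{L^2(-1,1)}^2=\frac{2}{2\ell+1}$. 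The coefficients $c_{n,\ell}$ are classical and can be written in closed form (they are, up to normalization, the coefficients expanding $(1-t^2)^n$, related to Gegenbauer/Jacobi connection coefficients); one finds $c_{n,\ell}=(2\ell+1)\int_{-1}^1(1-t^2)^nP_\ell(t)\,dt/2$, and a standard evaluation gives $c_{n,\ell}$ proportional to $\frac{(2\ell+1)(n!)^2}{(n-\ell/2)!\,(n+\ell/2+1)!}\cdot(\text{explicit factor})$ for even $\ell$. The asymptotics as $n\to\infty$ are governed by $\ell$ up to size $O(\sqrt n)$: after rescaling $\ell=\sqrt n\,\xi$, the sum becomes (to leading orders) a Riemann sum converging to $\int_0^\infty \frac{e^{-\xi^2/2}}{2\xi}\,(\ldots)\,d\xi$-type integrals, and the factor $(1+4\ell(\ell+1))^{-1/2}\sim \frac{1}{2\ell}$ produces the logarithmic divergence at small $\ell$ that is cut off at $\ell\sim 1$, yielding the $\frac{\log n}{n}$ term; the $\frac{C_3}{n}$ term comes from the constant-order correction in this analysis, and the remainder is $\mathcal O(n^{-2})$.

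The main obstacle I anticipate is the careful asymptotic bookkeeping in \fref{estim2}: one must track the expansion coefficients $c_{n,\ell}$ uniformly in $\ell$ (both for $\ell=O(\sqrt n)$, where a Gaussian approximation holds, and for the contribution of larger $\ell$, which must be shown negligible), and one must isolate the $\log n$ divergence cleanly — the delicate point is that $\sum_{1\le \ell\lesssim\sqrt n}\frac{1}{\ell}e^{-c\ell^2/n}\sim \frac12\log n$, so the coefficient $C_2$ of $\frac{\log n}{n}$ is pinned down by the small-$\ell$ behaviour of $c_{n,\ell}^2$, while $C_3$ requires the next-order terms and the matching between the small-$\ell$ and bulk regimes. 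An alternative, possibly cleaner route that avoids spherical-harmonic coefficients entirely is to compute $\int_{\SS^2}G(|\psi_n|^2)|\psi_n|^2\,d\sigma$ as the double integral $\frac{1}{4\pi}\iint_{\SS^2\times\SS^2}\frac{(1-x_3^2)^n(1-y_3^2)^n}{|x-y|}\,d\sigma(x)d\sigma(y)$, parametrize by $x_3,y_3$ and the azimuthal angle difference $\phi$, use $|x-y|^2 = (x_3-y_3)^2 + 2\sqrt{(1-x_3^2)(1-y_3^2)}(1-\cos\phi)$, and extract the $\log n$ from the near-diagonal region $x_3\approx y_3$, $\phi\approx 0$ by Laplace's method; I would present whichever of the two makes the constants $C_2,C_3$ most transparent.
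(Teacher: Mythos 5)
Your proposal is correct in substance, and it is worth noting that your ``alternative route'' at the end is precisely the paper's proof of \fref{estim2}: the paper writes the quantity as the four-fold integral in $(\theta,\theta',\varphi,\varphi')$, uses the algebraic identity behind \fref{identity} to factor the kernel as $(\cos\theta\cos\theta')^{-1/2}\bigl(\tfrac{1-\cos(\theta-\theta')}{\cos\theta\cos\theta'}+1-\cos(\varphi-\varphi')\bigr)^{-1/2}$, and then combines Laplace's method near the equator with the one-dimensional fact $\int_0^{2\pi}\int_0^{2\pi}(t+1-\cos(\varphi-\varphi'))^{-1/2}\,d\varphi\,d\varphi'=a\log t+b+\mathcal O(t)$, which is exactly your near-diagonal extraction of the logarithm. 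Your primary route is genuinely different: it exploits the identity $G=(1-4\Delta_\sigma)^{-1/2}$ (legitimate here, since the paper proves \fref{equivG} independently, via the Poincar\'e argument, inside the proof of Proposition \ref{theo-quadri}) together with the zonal Legendre expansion of $|\psi_n|^2=(1-x_3^2)^n$; after simplifying $\sqrt{1+4\ell(\ell+1)}=2\ell+1$ the quantity collapses to the purely one-dimensional sum $\pi\sum_{\ell\ \mathrm{even}}\bigl(\int_{-1}^1(1-t^2)^nP_\ell(t)\,dt\bigr)^2$, whose terms have Gamma-function closed forms, so the whole asymptotics reduces to Stirling plus the elementary fact $\sum_{1\le\ell\lesssim\sqrt n}\ell^{-1}e^{-c\ell^2/n}=\tfrac12\log n+O(1)$. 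What this buys is an exact diagonalization of $G$ and transparent provenance of the $\log n$ (the small-$\ell$ part of the spectral sum), at the price of the uniform-in-$\ell$ control of the Legendre coefficients across the regimes $\ell=O(1)$, $\ell\sim\sqrt n$, $\ell\gg\sqrt n$ that you correctly identify as the main bookkeeping burden for the $\mathcal O(n^{-2})$ remainder; the paper's route avoids spectral asymptotics altogether and localizes the analysis at the equator, which is why its write-up is so short. Your treatment of \fref{estim1} matches the paper's (which dismisses it as immediate); only a harmless slip there: $\int_{-1}^1(1-t^2)^n\,dt=\tfrac{2^{2n+1}(n!)^2}{(2n+1)!}$, twice your stated closed form, though the value $C_1=2\pi^{3/2}$ you quote is in fact the correct one, and in any case the lemma only requires the existence of the constants.
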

\begin{proof}
The first estimates \fref{estim1} are immediate. Let us prove \fref{estim2}, writing
$$\begin{array}{l}
\ds \int_{\SS^2} G(|\psi_n|^2)|\psi_n|^2\,d\sigma=\int\frac{(\cos\theta)^{2n+1}(\cos\theta')^{2n+1}}{\ds(|\cos\theta e^{i\varphi}-\cos\theta' e^{i\varphi'}|^2+(\sin\theta-\sin\theta')^2)^{1/2}}d\theta d\theta'd\varphi d\varphi'\\[5mm]
\ds =\int(\cos\theta)^{2n+1/2}(\cos\theta')^{2n+1/2}\left(\frac{1-\cos(\theta-\theta')}{\cos\theta\cos\theta'}+1-\cos(\varphi-\varphi')\right)^{-1/2}d\theta d\theta'd\varphi d\varphi',
\end{array}$$
where the integration domain is $(\theta,\theta',\varphi,\varphi')\in [-\pi/2,\pi/2]\times  [-\pi/2,\pi/2]\times[0,2\pi]\times[0,2\pi]$.
As $n\to +\infty$, the main contribution in this integral is near the equator $\theta=0$, $\theta'=0$. Hence some elementary analysis using
$$\int_0^{2\pi}\int_0^{2\pi}\left(t+1-\cos(\varphi-\varphi')\right)^{-1/2}d\varphi d\varphi'=a\log t+b+{\mathcal O}(t)\quad \mbox{as }t\to 0+,$$
yields \fref{estim2}.
\end{proof}

Now, for all $0<\delta\leq 1$ we introduce the function $\phi_n=c_n\psi_n$ such that $\|\phi_n\|_{L^2}=\delta$. We deduce from Lemma \ref{lemtech} that
\begin{equation}
  \label{estiphi1}
  \|\na_\sigma\phi_n\|_{L^2}^2=n(n+1)\delta^2,
\end{equation}
\begin{equation}
  \label{estiphi2}
\int_{\SS^2} G(|\phi_n|^2)|\phi_n|^2\,d\sigma=C_4 \delta^4\log n+C_5\delta^4+{\mathcal O}\left(\frac{\delta^4}{n}\right),
\end{equation}
for some constants $C_4$ and $C_5$ independent of $n$ and $\delta$. We will see that this factor $\log n$ is at the origin of the high frequency instability.

Let us define as in \cite{gerard} the space
$$L^2_n=\left\{f\in L^2(\SS^2):\,\forall \alpha\in \RR\quad f\circ R_\alpha=e^{in\alpha}f\right\},$$
where $R_\alpha$ denotes the rotation of angle $\alpha$ around the $x_3$ axis. Since the spherical harmonics take the form
$$Y^m_\ell=c_{m,\ell}\,P^m_\ell(\sin \theta)e^{im\varphi},$$
in spherical coordinates ($\theta$ is the angle between the vector $x$ and the equatorial plane $\{x_3=0\}$), where $P^m_\ell$ is a Legendre function and $c_{m,\ell}$ is a normalization factor, it is readily seen that the space $L^2_n$ is characterized by
\begin{equation}
  \label{L2n}
  L^2_n=\mbox{span}\left\{Y^{n}_{n+k}\,,\; k\in \NN\right\},
\end{equation}
where $n$ is fixed.
Therefore, we have a characterization of $\phi_n$ (up to a phase factor) as the minimizer of the Dirichlet energy on $L^2_n$:
$$\phi_n=\mbox{argmin}\left\{\|\nabla_\sigma u\|_{L^2}^2\,\mbox{for }u\in L^2_n \mbox{ such that }\|u\|_{L^2}=\delta\right\}.$$
Let us now minimize the energy associated to \fref{sp}:
$$\calE(u)=\|\na_\sigma u\|_{L^2}^2+\frac{1}{2}\int_{\SS^2} G(|u|^2)|u|^2\,d\sigma
$$
on the sphere of radius $\delta$ of $L^2_n$. It is easy to prove the compactness of minimizing sequences and to obtain the existence of a minimizer $f_n$ to this problem. The Euler equation reads
$$-\Delta_\sigma f_n+G(|f_n|^2)f_n=\omega_nf_n\,,$$
with
\begin{equation}
  \label{omegan}
  \omega_n=\frac{1}{\delta^2}\left(\|\na_\sigma f_n\|_{L^2}^2+\int_{\SS^2} G(|f_n|^2)|f_n|^2\,d\sigma\right)>0,
\end{equation}
and the function 
$$u_n(t,x)=e^{-it\omega_n}f_n(x)$$
is a solution of \fref{sp}. The key of the method is now to show that $f_n$ is close to $\phi_n$, up to a phase factor, and to give a precise estimate for $\omega_n$.
\begin{lemma}
\label{lemfn}
There exists a constant $C$ independent of $n\in \NN^*$ and $\delta\in(0,1]$ such that, for some $\alpha_n\in \RR$,
\begin{equation}
  \label{estifn}
  \left\|f_n-e^{i\alpha_n}\phi_n\right\|_{L^2}\leq C\delta^2\left(\frac{\log n}{n}\right)^{1/2},
\end{equation}
\begin{equation}
  \label{estiomegan}
  \omega_n=n(n+1)+C_4\delta^2\log n+C_5\delta^2+{\mathcal O}\left(\delta^3\frac{\log n}{n^{1/4}}\right).
\end{equation}
\end{lemma}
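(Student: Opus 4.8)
The plan is to combine three ingredients: the spectral gap of $-\Delta_\sigma$ restricted to $L^2_n$, the minimality of $f_n$ (resp.\ of $\phi_n$) for $\calE$ (resp.\ for the Dirichlet energy) on the sphere of radius $\delta$ of $L^2_n$, and the positivity of $G=(1-4\Delta_\sigma)^{-1/2}$ (cf.\ \fref{equivG}), which makes the Poisson energy $Q(u):=\int_{\SS^2}G(|u|^2)|u|^2\,d\sigma$ nonnegative. First I would record the following coercivity estimate on $L^2_n$: by \fref{L2n}, any $u=\sum_{k\geq 0}a_kY^n_{n+k}$ with $\sum_k|a_k|^2=\delta^2$ satisfies, since $(n+k)(n+k+1)-n(n+1)=k(2n+k+1)\geq 2(n+1)$ for $k\geq1$,
\[
\|\na_\sigma u\|_{L^2}^2-n(n+1)\delta^2=\sum_{k\geq1}k(2n+k+1)|a_k|^2\geq 2(n+1)\,\|u-P_nu\|_{L^2}^2,
\]
where $P_n$ is the orthogonal projection onto $\mathrm{span}(Y^n_n)=\mathrm{span}(\phi_n)$.

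To prove \fref{estifn}, I would feed into this the inequality $\calE(f_n)\leq\calE(\phi_n)$, which, using $\|\na_\sigma\phi_n\|_{L^2}^2=n(n+1)\delta^2$ and $Q\geq0$, gives
\[
0\leq \|\na_\sigma f_n\|_{L^2}^2-n(n+1)\delta^2\leq \tfrac12\bigl(Q(\phi_n)-Q(f_n)\bigr)\leq \tfrac12 Q(\phi_n)\leq C\delta^4\log n
\]
for $n\geq2$ by \fref{estiphi2}. Combined with the coercivity estimate for $u=f_n$, this yields $\|f_n-P_nf_n\|_{L^2}^2\leq C\delta^4\log n/n$, which for $n$ large forces $P_nf_n\neq0$. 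Writing $P_nf_n=\lambda_n\phi_n$ with $\lambda_n\in\CC$, one has $|\lambda_n|^2\delta^2=\delta^2-\|f_n-P_nf_n\|_{L^2}^2$, hence $1-|\lambda_n|\leq 1-|\lambda_n|^2\leq C\delta^2\log n/n$; setting $e^{i\alpha_n}=\lambda_n/|\lambda_n|$ and bounding $\|f_n-e^{i\alpha_n}\phi_n\|_{L^2}\leq\|f_n-P_nf_n\|_{L^2}+(1-|\lambda_n|)\delta$ gives \fref{estifn}, the first term being dominant (the two estimates of the Lemma being understood for $n$ large, which is the regime used to derive the instability).

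For \fref{estiomegan} I would start from the definition \fref{omegan}, namely $\omega_n\delta^2=\|\na_\sigma f_n\|_{L^2}^2+Q(f_n)$; the two-sided bound above then gives $\bigl|\omega_n\delta^2-n(n+1)\delta^2-Q(\phi_n)\bigr|\leq|Q(\phi_n)-Q(f_n)|$, so that by \fref{estiphi2} it only remains to prove $|Q(\phi_n)-Q(f_n)|\leq C\delta^5\log n/n^{1/4}$. Writing $f_n=e^{i\alpha_n}\phi_n+r_n$ with $\|r_n\|_{L^2}\leq C\delta^2(\log n/n)^{1/2}$ by \fref{estifn}, and $\mu:=|f_n|^2-|\phi_n|^2=2\,\mathrm{Re}(e^{-i\alpha_n}\overline{\phi_n}r_n)+|r_n|^2$, self-adjointness of $G$ gives $Q(f_n)-Q(\phi_n)=2\langle G(|\phi_n|^2),\mu\rangle_{L^2}+\langle G\mu,\mu\rangle_{L^2}$, which I would bound by $\|G(|\phi_n|^2)\|_{L^\infty}\|\mu\|_{L^1}+C\|\mu\|_{L^{4/3}}^2$ using Lemma \ref{lemmeG}. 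The remaining work is to estimate $\|\mu\|_{L^1}\leq\|r_n\|_{L^2}(\|f_n\|_{L^2}+\|\phi_n\|_{L^2})$ and $\|\mu\|_{L^{4/3}}\leq C(\|\phi_n\|_{L^4}\|r_n\|_{L^2}+\|r_n\|_{L^{8/3}}^2)$ by means of: the concentration bounds $\|\phi_n\|_{L^q}\leq C_q\,\delta\,n^{1/4-1/(2q)}$, obtained exactly as the estimates of Lemma \ref{lemtech} (from $\|\psi_n\|_{L^q}^q=2\pi\int(\cos\theta)^{nq}\cos\theta\,d\theta$ and Laplace's method); the a priori bound $\|f_n\|_{H^1}+\|\phi_n\|_{H^1}\leq Cn\delta$, which follows from $\calE(f_n)\leq\calE(\phi_n)$, $Q\geq0$ and \fref{estifn}; Gagliardo--Nirenberg on $\SS^2$ to control $\|r_n\|_{L^{8/3}}$; and once more Lemma \ref{lemmeG} for $\|G(|\phi_n|^2)\|_{L^\infty}$. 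Gathering these yields the claimed bound, and hence \fref{estiomegan}.

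The hard part will be this last estimate of $Q(f_n)-Q(\phi_n)$: the crude bound $|Q(f_n)-Q(\phi_n)|\lesssim\delta^3\|r_n\|_{L^2}$ is not good enough, since $G$ does not turn the $L^2$-smallness of $r_n$ into anything useful when it is tested against the sharply concentrated profile $|\phi_n|^2$, and one genuinely needs the precise $L^q$ concentration of $\phi_n$ near the equator together with the $L^p$-smoothing of $G$ to gain the extra power of $n$. A secondary subtlety is that the spectral gap on $L^2_n$ is only of order $n$, not $n^2$, so the $\log n$ produced by $Q(\phi_n)$ is damped only by $1/n$ — which is precisely why the weight in \fref{estifn} is $(\log n/n)^{1/2}$ and not smaller.
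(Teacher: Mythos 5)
Your proof of \fref{estifn} coincides with the paper's: the decomposition on the $Y^n_{n+k}$, the comparison $\calE(f_n)\leq\calE(\phi_n)$ and the gap $k(2n+k+1)\geq 2(n+1)$ are exactly the inequalities \fref{n1}--\fref{n4} of the text. For \fref{estiomegan} you keep the same skeleton (the two-sided bound reducing everything to $|Q(\phi_n)-Q(f_n)|\lesssim\delta^5\log n\,n^{-1/4}$), but you estimate this difference by a genuinely different route: the paper uses the $L^{8/3}$ Lipschitz bound \fref{cl1} together with Sogge's estimate \fref{sogge}, applied coefficient-wise with a Cauchy--Schwarz weighted by $k(2n+k+1)$, to get $\|f_n-e^{i\alpha_n}\phi_n\|_{L^{8/3}}\lesssim\delta^2\log n\,n^{-7/16}$ and $\|f_n\|_{L^{8/3}},\|\phi_n\|_{L^{8/3}}\lesssim\delta n^{1/16}$, while you expand $Q(f_n)-Q(\phi_n)=2\langle G(|\phi_n|^2),\mu\rangle+\langle G\mu,\mu\rangle$ with $\mu=|f_n|^2-|\phi_n|^2$, $r_n=f_n-e^{i\alpha_n}\phi_n$, and use only the explicit profile of $\phi_n$ plus standard interpolation. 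Your numbers do close, but with no margin in the power of $n$ for the cross term, so they deserve to be written out: \fref{es2} with $\|\,|\phi_n|^2\|_{L^p}\lesssim\delta^2 n^{1/2-1/(2p)}$ and $\|\,|\phi_n|^2\|_{L^1}=\delta^2$ gives $\|G(|\phi_n|^2)\|_{L^\infty}\lesssim\delta^2 n^{1/4}$ for every $p>2$, since the exponent $(1/2-1/(2p))\tfrac{p}{2p-2}$ equals $1/4$ identically (this is far from the true size $\delta^2\log n$, but it is exactly enough), and with $\|\mu\|_{L^1}\lesssim\delta^3(\log n/n)^{1/2}$ the cross term is $\lesssim\delta^5(\log n)^{1/2}n^{-1/4}$; for the quadratic term, $\|\mu\|_{L^{4/3}}\lesssim\|\phi_n\|_{L^4}\|r_n\|_{L^2}+\|r_n\|_{L^{8/3}}^2\lesssim\delta^3(\log n)^{1/2}n^{-3/8}+\delta^{7/2}(\log n)^{3/4}n^{-1/4}$, using $\|\phi_n\|_{L^4}\lesssim\delta n^{1/8}$, the crude bound $\|r_n\|_{H^1}\lesssim n\delta$ and Gagliardo--Nirenberg, whence $\|\mu\|_{L^{4/3}}^2\lesssim\delta^5\log n\,n^{-1/4}$ as well. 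So your argument is correct (for $n\geq2$, which is all the instability argument uses; the same restriction is implicit in the paper's bound $C\delta^4\log n$ in \fref{n3bis}); what it buys is the complete elimination of Sogge's eigenfunction estimate at the price of exploiting the explicit $L^q$ asymptotics of $\phi_n$, whereas the paper's Sogge-based argument yields a sharper $L^{8/3}$ bound on the remainder and would still apply when the extremal profile is not explicit.
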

\begin{proof}
Since $f_n$ belongs to $L^2_n$, let us decompose this function on the spherical harmonics, according to \fref{L2n}:
\begin{equation}
\label{decomp}
f_n=a_0\phi_n+\sum_{k=1}^\infty a_kY_{n+k}^n\,,
\end{equation}
the $Y_\ell^n$ being chosen normalized in $L^2$.
One can deduce from the normalization conditions that
\begin{equation}
  \label{n1}
|a_0|^2\delta^2+\sum_{k=1}^\infty |a_k|^2=\delta^2
\end{equation}
and the property $\calE(f_n)\leq \calE(\phi_n)$ reads
\begin{equation}
  \label{n2}
\begin{array}{r}
\ds  |a_0|^2\delta^2n(n+1)+ \sum_{k=1}^\infty|a_k|^2(n+k)(n+k+1)+\frac{1}{2}\int_{\SS^2} G(|f_n|^2)|f_n|^2\,d\sigma\\[3mm]
\ds \qquad \leq \delta^2n(n+1)+\frac{1}{2}\int_{\SS^2} G(|\phi_n|^2)|\phi_n|^2\,d\sigma\,.
\end{array}
\end{equation}
By substracting $n(n+1)\times$\fref{n1} to \fref{n2}, we obtain
\begin{equation}
  \label{n3bis}
  \sum_{k=1}^\infty k(2n+k+1)|a_k|^2\leq \frac{1}{2}\int_{\SS^2} G(|\phi_n|^2)|\phi_n|^2\,d\sigma\leq C\delta^4\log n,
\end{equation}
where we used \fref{estiphi2}. Hence we have
  \begin{equation}
  \label{n3}
 \|f_n-a_0\phi_n\|_{L^2}^2=\sum_{k=1}^\infty |a_k|^2\leq C\delta^4\frac{\log n}{n}.
\end{equation}
Inserting \fref{n3} in \fref{n1} leads to
\begin{equation}
  \label{n4}
  0\leq 1-|a_0|^2\leq C\delta^2\frac{\log n}{n}
\end{equation}
and \fref{estifn} follows, setting $e^{i\alpha_n}=\frac{a_0}{|a_0|}$. 

Let us now prove \fref{estiomegan}. By combining the two inequalities
$$\|\na_\sigma f_n\|_{L^2}^2+\frac{1}{2}\int_{\SS^2} G(|f_n|^2)|f_n|^2\,d\sigma\leq \|\na_\sigma \phi_n\|_{L^2}^2+\frac{1}{2}\int_{\SS^2} G(|\phi_n|^2)|\phi_n|^2\,d\sigma$$
and
$$\|\na_\sigma \phi_n\|_{L^2}^2\leq \|\na_\sigma f_n\|_{L^2}^2,$$
and by using \fref{omegan} it comes
\begin{equation}
\label{dif}
\begin{array}{r}
\ds 0\leq \|\na_\sigma \phi_n\|_{L^2}^2+\int_{\SS^2} G(|\phi_n|^2)|\phi_n|^2\,d\sigma-\delta^2\omega_n\qquad \qquad \qquad \qquad \\
\ds\leq \int_{\SS^2} \left(G(|\phi_n|^2)|\phi_n|^2\,d\sigma-G(|f_n|^2)|f_n|^2\right)d\sigma.
\end{array}
\end{equation}
Moreover, for all $M>0$ and for all $u,v$ in the centered ball of $L^{8/3}(\SS^2)$ of radius $M$, we deduce from the estimate \fref{es1} on the operator $G$ that
\begin{equation}
  \label{cl1}
  \left|\int_{\SS^2} \left(G(|u|^2)|u|^2-G(|v|^2)|v|^2\right)d\sigma\right|\leq CM^3\|u-v\|_{L^{8/3}}.
\end{equation}
Let us now  estimate $f_n$, $\phi_n$ and the difference between these two functions in $L^{8/3}$. 
The key point will be the following estimate on spherical harmonics due to Sogge \cite{sogge}:
\begin{equation}
  \label{sogge}
\|Y_\ell^n\|_{L^{8/3}}\leq C\ell^{1/16}\|Y_\ell^n\|_{L^2}.
\end{equation}
Therefore we have
\begin{equation}
\label{phi8/3}
\|\phi_n\|_{L^{8/3}}\leq C\delta n^{1/16}.
\end{equation}
Moreover, from the decomposition \fref{decomp} of $f_n$ and by the Minkowski inequality,
$$\|f_n-e^{i\alpha_n}\phi_n\|_{L^{8/3}}\leq \left\|\left(a_0-\frac{a_0}{|a_0|}\right)\phi_n\right\|_{L^{8/3}}+\sum_{k=1}^\infty |a_k|\|Y_{n+k}^n\|_{L^{8/3}}.$$
By \fref{sogge}, \fref{n3bis} and Cauchy-Schwarz, we have
$$
\begin{array}{ll}
\ds \sum_{k=1}^\infty |a_k|\|Y_{n+k}^n\|_{L^{8/3}}
&\ds \leq C\left(\sum_{k=1}^\infty \frac{(n+k)^{1/8}}{k(2n+k+1)}\right)^{1/2}\left(\sum_{k=1}^\infty k(2n+k+1)|a_k|^2\right)^{1/2}\\[5mm]
&\ds \leq C\delta^2\frac{\log n}{n^{7/16}}.
\end{array}$$
Hence, using also \fref{n4} and \fref{phi8/3}, we obtain
$$\|f_n-e^{i\alpha_n}\phi_n\|_{L^{8/3}}\leq C\delta^2\frac{\log n}{n^{7/16}}\quad \mbox{and}\quad \|f_n\|_{L^{8/3}}\leq C\delta n^{1/16}.$$
Finally, \fref{cl1} leads to
$$\left|\int_{\SS^2} \left(G(|\phi_n|^2)|\phi_n|^2-G(|f_n|^2)|f_n|^2\right)d\sigma\right|\leq C\delta^5\frac{\log n}{n^{1/4}}.$$
By inserting this estimate in \fref{dif}, then by using \fref{estiphi1}, \fref{estiphi2}, we obtain \fref{estiomegan}.
\end{proof}

\ms
\ni
{\bf \em Proof of Theorem \pref{theo2}.}

\ms
\ni
We now have the tools to prove the high frequency instability and Theorem \ref{theo2}. Remark first that, replacing $f_n$ by $f_ne^{-i\alpha_n}$, one may assume $\alpha_n=0$ in \fref{estifn}. As in \cite{gerard}, we consider two values of $\delta$:
$$\delta_n=\delta_0,\qquad \delta'_n=\kappa_n\delta_0,$$
where $0<\delta_0\leq 1$ and $\kappa_n\to 1$ as $n\to +\infty$ in a way that is defined below (see \fref{kappan}), and we denote by $f_n$, $\phi_n$ and $f'_n$, $\phi'_n$ the corresponding functions.

By \fref{estifn}, we have
$$\|f_n-f'_n\|_{L^2}\leq \|f_n-\phi_n\|_{L^2}+\|f'_n-\phi'_n\|_{L^2}+|\delta_n-\delta'_n|\leq C\left(\frac{\log n}{n}\right)^{1/2}\delta_0+|1-\kappa_n|\delta_0,$$
whereas the corresponding solutions of \fref{sp}
$$u_n=e^{-it\omega_n}f_n,\qquad u'_n=e^{-it\omega'_n}f'_n$$
satisfy
$$\begin{array}{ll}
\ds \|u_n-u'_n\|_{L^2}&\ds \geq \left|e^{-it\omega_n}-e^{-it\omega'_n}\right|-\|f_n-f'_n\|_{L^2}\\
&\ds \geq \left|\sin\left(\frac{t}{2}(\omega_n-\omega'_n)\right)\right|\delta_0-C\left(\frac{\log n}{n}\right)^{1/2}\delta_0-|1-\kappa_n|\delta_0.\end{array}$$
By \fref{estiomegan}, we have
$$\omega_n-\omega'_n=C_4(1-\kappa_n^2)\delta_0^2\log n +C_5(1-\kappa_n^2)\delta_0^2+{\mathcal O}\left(\frac{\log n}{n^{1/4}}\right).$$
If now we define $\kappa_n$ by
\begin{equation}
  \label{kappan}
  \kappa_n=\left(1-(\log n)^{-1/2}\right)^{1/2}
\end{equation}
and an observation time $t_n$ by
$$t_n =\frac{\pi}{\omega_n-\omega'_n}=\frac{\pi}{C_4\delta_0^2(\log n)^{1/2}}(1+o(1)),$$
we have clearly a sequence $t_n\to 0+$ such that
$$\|(u_n-u'_n)(t_n,\cdot)\|_{L^2}\geq \delta_0-\eps_n,\qquad \|(u_n-u'_n)(0,\cdot)\|_{L^2}\leq \eps_n,$$
where $\eps_n\to 0$ as $n\to +\infty$. Since $u_n$ and $u'_n$ are in the ball of radius $\delta_0$ of $L^2(\SS^2)$, this contradicts Item (iii) of Definition \ref{def-wellposedness}, which means that \fref{sp} is not uniformly well-posed on $L^2(\SS^2)$.
\qed

\bs

\bs
\section{Asymptotic analysis}
\label{sect3}

The aim of this second part of the paper is to prove Theorem \ref{thm3}, which justifies the use of \fref{sp} (in fact, of the mixed-state version \fref{schrodlim} of this equation) as a model for quantum transport on a surface by means of asymptotic analysis,  deriving this system from a well-established model, the three-dimensional Schr\"odinger-Poisson system \fref{schrod}. The Cauchy problem for this system without the confinement potential was studied in \cite{bm,ilz} in the energy space, and in \cite{castella} in $L^2$.

\subsection{Estimating the Poisson nonlinearity}
In this subsection, we obtain some estimates on the Poisson nonlinearity for functions confined near the sphere by the confinement operator $\Vc^\eps$. We will see that the following family of norms is well-adapted for the study of the singularly perturbed nonlinear problem \fref{schrod}:
$$\|\lnar^{s} u\|_{L^2}+\eps\|u\|_{\calB^1},$$
where $s$ is a positive integer, whereas the following family of norms is well-adapted to the limit problem \fref{schrodlim}:
$$\|\lnar^{s_1} u\|_{L^2}+\|(1+\eps^2 H_r)^{s_2/2}u\|_{L^2},$$
where $s_1$ and $s_2$ are nonnegative integers and where we recall the definition \fref{Hr} of the Hamiltonian $H_r$, which is nonnegative thanks to Assumption \ref{H1}. The nonlinear analysis of \fref{schrod} will be based on Lemmas \ref{noyaupoisson} and \ref{noyaupoisson2}.
\begin{lemma}
\label{noyaupoisson}
Let $u\in \calB^1$, such that $\lnar^s u\in L^2$ with $s\geq 1$. Then there exists $\gamma>0$ and a constant $C>0$ such that, for all $\eps\in (0,1]$,  the nonlinearities $V(|u|^2)u$ and $\GG(|u|^2)u$, respectively defined by \fref{poisson} and \fref{G2}, satisfy the tame estimate
\bea
\left\|\lnar^s\left(\GG(|u|^2)u\right)\right\|_{L^2}&\leq& C\|\lnar u\|_{L^2}^2\|\lnar^s u\|_{L^2}\label{pois2}\\
\left\|\lnar^s\left(V(|u|^2)u\right)\right\|_{L^2}&\leq& C\left(\|\lnar u\|_{L^2}^2+\eps^{2(1+\gamma)}\,\|u\|_{\calB^1}^2\right)\|\lnar^s u\|_{L^2}\label{pois1}
\eea
where $C$ is independent of $\eps$. Moreover, if $u,v \in \calB^1$ are such that $\lnar u,\,\lnar v\in L^2$, then
\be
\label{poisuv2}
\begin{array}{l}
\ds \left\|\lnar\left(\GG(|u|^2)u-\GG(|v|^2)v\right)\right\|_{L^2}\\[3mm]
\ds \qquad \leq C\left(\|\lnar u\|_{L^2}^2+\|\lnar v\|_{L^2}^2\right)\|\lnar (u-v)\|_{L^2},
\end{array}
\ee
\be
\label{poisuv1}
\begin{array}{l}
\ds\left\|\lnar\left(V(|u|^2)u-V(|v|^2)v\right)\right\|_{L^2}\\[3mm]
\ds \qquad \leq C\left(\|\lnar u\|_{L^2}^2+\eps^{2(1+\gamma)}\,\|u\|_{\calB^1}^2+\|\lnar v\|_{L^2}^2+\eps^{2(1+\gamma)}\,\|v\|_{\calB^1}^2\right)\times\\[3mm]
\ds \hspace*{4.5cm} \times \left(\|\lnar (u-v)\|_{L^2}+\eps^{2(1+\gamma)}\,\|u-v\|_{\calB^1}\right)\,.\end{array}
\ee
Finally, if $\lnar^2 u\in L^2$, then we have
\be
\left\|\lnar\left(V(|u|^2)u-\GG(|u|^2)u\right)\right\|_{L^2}\leq C\eps^\gamma\left(\|\lnar^2 u\|_{L^2}^3+\eps^3\,\|u\|_{\calB^1}^3\right)
.\label{poisdiff}
\ee
\end{lemma}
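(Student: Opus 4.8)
The plan is to derive all five estimates from a common set of ingredients. The first is algebraic: since $\lnar=(1-\Delta_\sigma)^{1/2}$ acts only on the $\sigma$-variable it commutes with every radial operation — multiplication by a function of $r$, $\pa_r$, and hence $H_r$ and the full Laplacian $\Delta=\frac1{r^2}\pa_r(r^2\pa_r)+\frac1{r^2}\Delta_\sigma$ — and it commutes with $\Delta_\sigma$; consequently $\lnar^s$ commutes both with $V=(-\Delta)^{-1}$ (the Newton potential) and with $\GG$, so $\lnar^s(V(f))=V(\lnar^s f)$ and $\lnar^s(\GG(f))=\GG(\lnar^s f)$. Moreover, by \fref{equivG}, $G=(1-4\Delta_\sigma)^{-1/2}$ is smoothing of order one on $\SS^2$, and $\GG(f)$ depends on $\sigma$ only, with $\GG(f)=G\bigl(\int_0^{+\infty}f(r',\cdot)\,r'^2dr'\bigr)$. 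The second ingredient is the fractional Leibniz (Moser) inequality in the $\sigma$-variable on $\SS^2$, together with the Sobolev embeddings $H^{t}(\SS^2)\hookrightarrow L^q(\SS^2)$: this lets one reduce $\|\lnar^s(\text{product})\|_{L^2}$ to terms in which all angular derivatives sit on one factor, the others measured in a suitable $L^q$. The third ingredient is the confinement: \fref{superharm} gives $\Vc^\eps(r)\ge C\,\eps^{-2-\alpha}|r-1|^\alpha$, whence the moment bounds $\int_{\RR^3}|r-1|^\beta|u|^2\,r^2dr\,d\sigma\le C\,\eps^{2+\beta}\|u\|_{\calB^1}^2$ for $0\le\beta\le\alpha$ (with $\|u\|_{L^2}\le\|\lnar u\|_{L^2}$ these force the radial mass of $u$ to lie within $O(\eps)$ of $\SS^2$), and, via a one-dimensional Agmon inequality in $r$ and $\na u\in L^2$, control of $u$ in $L^\infty_rL^2_\sigma$; the hypothesis $\alpha>2$ is used here and in the a priori bounds.

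With these in hand I would first treat the flattened potential. Writing $\GG(|u|^2)=G(h)$ with $h(\sigma)=\int_0^{+\infty}|u(r\sigma)|^2r^2dr$, Minkowski's inequality and $H^1(\SS^2)\hookrightarrow L^q(\SS^2)$ give $\|h\|_{L^p(\SS^2)}\le C\|\lnar u\|_{L^2}^2$ for every finite $p$, so \fref{es2} yields $\|\GG(|u|^2)\|_{L^\infty(\SS^2)}\le C\|\lnar u\|_{L^2}^2$; distributing $\lnar^s=\lnar^{s-1}\lnar$ and exploiting the derivative gain of $G$ one similarly gets $\|\lnar^s\GG(|u|^2)\|_{L^q(\SS^2)}\le C\|\lnar u\|_{L^2}\|\lnar^s u\|_{L^2}$ for a suitable $q$. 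Applying the angular Leibniz rule to $\GG(|u|^2)\cdot u$ — the term with $\lnar^s$ on $u$ bounded by the $L^\infty$ estimate, the term with $\lnar^s$ on the potential bounded by the $L^q$ estimate together with the remark that, $\GG(|u|^2)$ being a function of $\sigma$ only, $\|\GG(|u|^2)\,u\|_{L^2(\RR^3)}^2=\int_{\SS^2}|\GG(|u|^2)|^2h\,d\sigma$, so that no $L^\infty_\sigma$ or mixed norm of $u$ is needed — then gives \fref{pois2}, and the same computation applied to the trilinear identity $\GG(|u|^2)u-\GG(|v|^2)v=\GG(|u|^2)(u-v)+\GG((u-v)\bar u)\,v+\GG\bigl(v\,\overline{(u-v)}\bigr)v$ gives \fref{poisuv2}.

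The core of the lemma is the comparison of the true Newton potential $V$ with $\GG$. Writing $x=r\sigma$, $x'=r'\sigma'$, $a=|r\sigma-r'\sigma'|$, $b=|\sigma-\sigma'|$, one has the identity $b^2-a^2=-(r-r')^2-b^2\bigl((r-1)+(r'-1)+(r-1)(r'-1)\bigr)$, which, since $a\ge|r-r'|$ and $a\ge\sqrt{rr'}\,b$, yields for $r,r'$ near $1$ and any small $\gamma\in(0,1)$ a pointwise kernel estimate of the type
\[
\Bigl|\frac1a-\frac1b\Bigr|\le C\,\frac{|r-1|^\gamma+|r'-1|^\gamma+|r-r'|^\gamma}{b^{\,1+\gamma}},
\]
whose right-hand side is governed by a Hardy-Littlewood-Sobolev inequality on $\SS^2$ (a mildly more singular variant of \fref{es1}); the part of the integral where $|x'|$ is away from $1$ is separately negligible, of size $O(\eps^{2+\alpha})$, since $V(|u|^2)$ is the Newton potential of a nonnegative density of finite mass concentrated near $\SS^2$. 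Inserting this into $V(|u|^2)-\GG(|u|^2)$, estimating the resulting fractional integral on $\SS^2$, and absorbing the radial weights $|r-1|^\gamma,\,|r'-1|^\gamma,\,|r-r'|^\gamma$ by the moment bounds, one obtains the refined difference estimate \fref{poisdiff} (the two extra angular derivatives are needed because $\lnar$ may now also fall on the $u$ inside the potential, and $V(|u|^2)$ no longer depends on $\sigma$ alone); writing $V=\GG+(V-\GG)$, using \fref{pois2} for the first term and a cruder form of the comparison for the second gives the tame estimate \fref{pois1}, and \fref{poisuv1} follows from the same trilinear splitting applied to $V$. The hard part is precisely this kernel comparison: one has to choose $\gamma$ and distribute the powers of $|r-1|,\,|r'-1|,\,|r-r'|$ so that \emph{simultaneously} the angular singularity stays in the admissible range of \fref{es1}, each surviving radial integral is a moment $\int|r-1|^\beta|u|^2$ with $\beta\le\alpha$ (this is where $\alpha>2$, together with the $L^\infty_rL^2_\sigma$ control of $u$ coming from $\|u\|_{\calB^1}$, really enters), and the far field together with the antipodal region $\sigma\approx-\sigma'$ (where $b$ is bounded below) are peeled off first; turning this bookkeeping into the clean gains $\eps^{2(1+\gamma)}$ and $\eps^\gamma$ is the technical heart of the proof.
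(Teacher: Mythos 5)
Your outline follows the paper's architecture quite closely: commuting $\lnar^s$ through $V=(-\Delta)^{-1}$ and $\GG$, $L^\infty$ bounds on the potentials via \fref{es1}--\fref{es2} and the embedding $\|u\|_{L^2_rL^p_\sigma}\leq C\|\lnar u\|_{L^2}$, a product (Leibniz/multiplier) estimate in the $\sigma$ variable, the identity $|r\sigma-r'\sigma'|^2=(r-r')^2+rr'|\sigma-\sigma'|^2$ leading to a comparison of the two kernels in which a gain in $|r-1|,|r'-1|$ is paid for by an angular singularity $|\sigma-\sigma'|^{-1-\eta}$, and conversion of radial weights into powers of $\eps$ through $\Vc(z)\geq C|z|^\alpha$ with $\alpha>2$. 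Two deviations are worth flagging, though they are not the main issue: the paper proves \fref{pois1} directly from the pointwise bound \fref{maj}, $|r\sigma-r'\sigma'|^{-1}\leq 2(\max(r,r'))^{-1}|\sigma-\sigma'|^{-1}$, combined with the weighted embedding \fref{em3}, rather than through $V=\GG+(V-\GG)$ (your route is workable there, since no $\eps$-gain is needed on the difference for \fref{pois1}); and the uniform $L^\infty_rL^2_\sigma$ control of $u$ you invoke via a one-dimensional Agmon inequality is false near $r=0$ because of the degeneracy of the measure $r^2dr$ (the trace constant blows up as $r\to0$), which is exactly why the paper works instead with the Hardy-weighted quantities $u/r$ and $u/r^{1/2}$ in \fref{em2}--\fref{em3}.

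The genuine gap is in \fref{poisdiff}, which is the decisive estimate. After the kernel comparison, the radial weights $|r-1|^\eta,|r'-1|^\eta$ do not sit only on $|u|^2$: once $\lnar$ is distributed over $(V-\GG)(|u|^2)u$ they also multiply $\lnar u$ and $\lnar(|u|^2)$, and your moment bounds $\int|r-1|^\beta|u|^2\,r^2dr\,d\sigma\leq C\eps^{2+\beta}\|u\|_{\calB^1}^2$ say nothing about these terms: neither $\|u\|_{\calB^1}$ nor any hypothesis of the lemma localizes $\lnar u$ near $r=1$. This is precisely where the hypothesis $\lnar^2u\in L^2$ enters in the paper: it first proves the intermediate estimate \fref{chi}, in which an unabsorbed factor $\|\chi(r-1)\lnar u\|_{L^2}^\eta$ survives, and then removes it by the interpolation $\|\chi(r-1)\lnar u\|_{L^2}\leq C\|\chi(r-1)u\|_{L^2}^{1/2}\|\lnar^2u\|_{L^2}^{1/2}$, followed by the confinement bound on $\|\chi(r-1)u\|_{L^2}$ from Assumption \ref{H1} with $\alpha>2$; only this step produces the final gain $\eps^\gamma$. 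Your stated reason for requiring $\lnar^2u$ (that $\lnar$ may fall on the $u$ inside the potential) cannot be the right one, since that already occurs in \fref{pois2} and costs nothing extra, and ``absorbing the radial weights by the moment bounds'' is not a valid step where the weight multiplies $\lnar u$ rather than $u$. Supplying this weight-versus-angular-derivative interpolation (or an equivalent commutator argument) is indispensable; it is exactly the bookkeeping you defer as the technical heart, and without it the claimed $\eps^\gamma$ in \fref{poisdiff} does not follow from the outline.
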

\begin{proof} 

\bs
\ni
{\em Step 1. Sobolev embeddings}. Let us first write some anisotropic Sobolev embeddings that will be useful several times in the proof. In spherical coordinates (recall that $x=r\sigma$), we shall denote
$$\forall p,q \in [1,+\infty)\qquad \|u\|_{L^p_r L^q_\sigma}=\left(\int_{0}^\infty\left(\int_{\SS^2} |u|^q\, d\sigma\right)^{q/p}r^2dr\right)^{1/q},$$
$$\forall q \in [1,+\infty)\qquad \|u\|_{L^\infty_r L^q_\sigma}=\supess_{r>0} \left\|u(r,\cdot)\right\|_{L^q_\sigma}.$$
By Sobolev embeddings in dimension 2, we have
\be
\label{em1}
\left\|u\right\|_{L^2_rL^p_\sigma}\leq C_p \|\lnar u\|_{L^2}
\ee
for all $p\in[2,\infty)$. Moreover, since the $H^1$ norm reads in spherical coordinates
$$\|u\|_{H^1}^2=\|u\|_{L^2}^2+\|\pa_ru\|^2_{L^2}+\left\|\frac{1}{r}\na_\sigma u\right\|^2_{L^2},$$
we deduce from the Hardy inequality
$$\left\|\frac{u}{r}\right\|_{L^2}\leq C\|\na u\|_{L^2}$$
that
\be
\label{bornehardy}
\left\|\lnar\left(\frac{u}{r}\right)\right\|_{L^2}\leq C\|u\|_{H^1}.
\ee
Hence, by Sobolev embeddings, we get
\be
\label{em2}
\left\|\frac{u}{r}\right\|_{L^2_rL^p_\sigma}\leq C_p \|u\|_{H^1}\leq C_p \|u\|_{\calB^1}
\ee
for all $p\in[2,\infty)$. Finally, we claim that
\be
\label{em3}
\left\|\frac{u}{r^{1/2}}\right\|_{L^2_rL^4_\sigma}\leq C\|\lnar u\|_{L^2}+C\eps^{1+\gamma}\|u\|_{\calB^1},
\ee
for some $\gamma>0$ independent of $\eps$.
In order to prove \fref{em3}, let us split the integral on $\{r> 1/2\}\cup\{r< 1/2\}$. We get
\bee
&&\left\|\frac{u}{r^{1/2}}\right\|_{L^2_rL^4_\sigma}= \left\|\frac{u}{r^{1/2}}\un_{r>1/2}\right\|_{L^2_rL^4_\sigma}+\left\|\frac{u}{r^{1/2}}\un_{r<1/2}\right\|_{L^2_rL^4_\sigma}\\
&&\qquad\leq\sqrt{2}\left\|u\un_{r>1/2}\right\|_{L^2_rL^4_\sigma}+C\left\|u\un_{r<1/2}\right\|_{L^2}^{1/2-\eta}\left\|u\un_{r<1/2}\right\|_{L^2_rL^{p_1}_\sigma}^{\eta}\left\|\frac{u}{r}\un_{r<1/2}\right\|_{L^2_rL^{p_1}_\sigma}^{1/2}\\
&&\qquad\leq C\|\lnar u\|_{L^2}+C\left\|u\un_{r<1/2}\right\|_{L^2}^{1/2-\eta}\|\lnar u\|_{L^2}^{\eta}\left\|u\right\|_{\calB^1}^{1/2}
\eee
where we used a H\"older inequality for the second inequality and \fref{em1}, \fref{em2} for the third one. Here $\eta\in(0,1/2)$ is a parameter that will be fixed later and $p_1=2+1/\eta$. It remains to use the properties of the confinement operator. From Assumption \ref{H1}, we deduce
\bea
\left\|u\un_{r<1/2}\right\|_{L^2}\leq2^{\alpha/2}\left\|(r-1)^{\alpha/2} u\right\|_{L^2}&\leq&C\eps^{\alpha/2} \left\|\Vc\left(\frac{r-1}{\eps}\right)^{1/2}u\right\|_{L^2}\nonumber\\
&\leq& C\eps^{1+\alpha/2}\|u\|_{\calB^1}.\label{confresu}
\eea
Hence,
\bee
\left\|\frac{u}{r^{1/2}}\right\|_{L^2_rL^4_\sigma}&\leq&C\|\lnar u\|_{L^2}+C\eps^{(1/2-\eta)(1+\alpha/2)}\left\|u\right\|_{\calB^1}^{1-\eta}\|\lnar u\|_{L^2}^{\eta}
\eee
To conclude, we choose $\eta$ small enough such that $(1/2-\eta)(1+\alpha/2)> 1-\eta$. This is possible thanks to the assumption $\alpha>2$. The Young inequality finally gives  \fref{em3}.

\bs
\ni
{\em Step 2. Proof of \fref{pois2}}. Let $s\geq 1$. From the Littlewood-Paley theory and the Mikhlin-H\"ormander multiplier theorem \cite{AG,schlag} applied on the sphere, we have
\bea
\left\|\lnar^s\left(\GG(|u|^2)u\right)\right\|_{L^2}&\leq &C\left\|\GG(|u|^2)\right\|_{L^\infty}\left\|\lnar^s u\right\|_{L^2}\nonumber\\&&+C\left\|\lnar^s \GG(|u|^2)\right\|_{L^p_\sigma}\|u\|_{L^2_rL^q_\sigma}\label{lp1}
\eea
for all pair $(p,q)$ such that $p>2$ and $\frac{1}{q}+\frac{1}{p}=\frac{1}{2}$.
The $L^\infty$ norm of $\GG(|u|^2)$ is easy to estimate. Indeed, this function is independent of the variable $r$ and we notice that $\GG$ is linked to the operator $G$ defined by \fref{G} thanks to the relation
$$\GG (|u|^2)=\int_0^{+\infty} G(|u|^2(r',\cdot))\,r'^2dr',$$
Pick a real number $p>2$. From \fref{es2} and the H\"older inequality, we deduce
\bea
0\leq \GG(|u|^2)&\leq& C\int_0^{+\infty} \left\|u(r'\cdot))\right\|_{L^{2p}_\sigma}^{2p/(2p-2)}\left\|u(r'\cdot))\right\|_{L^2_\sigma}^{(2p-4)/(2p-2)}\,r'^2dr'\nonumber\\
&\leq &C\left\|u\right\|_{L^2_rL^{2p}_\sigma}^{2p/(2p-2)}\left\|u\right\|_{L^2}^{(2p-4)/(2p-2)}\nonumber\\
&\leq& C\|\lnar u\|_{L^2}^2\label{norme1}
\eea
where we used the Sobolev embedding \fref{em1}.

Let us now estimate the $L^\infty_rL^p_\sigma$ norm of $\lnar^s \GG(|u|^2)$, where $p>2$ is given. Since, from \fref{equivG}, we have
$$\GG(|u|^2)=\int_0^{+\infty}(1-4\Delta_\sigma)^{-1/2}(|u|^2)r^2dr,$$
the operator $-\Delta_\sigma$ commutes with $\GG$ and therefore
$$\lnar^s\GG(|u|^2)=\GG\left(\lnar^s(|u|^2)\right).$$
Thus, by \fref{es1},
\bea
\left\|\lnar^s\GG(|u|^2)\right\|_{L^p_\sigma}&\leq& C\int_0^{+\infty}\left\|\lnar^s(|u|^2)(r\cdot)\right\|_{L^{p_2}_\sigma}r^2dr\nonumber\\
&\leq&C\int_0^{+\infty}\left\|\lnar^su(r\cdot)\right\|_{L^2_\sigma}\left\|u(r\cdot)\right\|_{L^p_\sigma}r^2dr\nonumber\\
&\leq &C\left\|\lnar^su\right\|_{L^2}\left\|u\right\|_{L^2_rL^p_\sigma}\label{norme2}
\eea
where $p_2$ was chosen such that $\frac{1}{p_2}=\frac{1}{p}+\frac{1}{2}$ and where the Mikhlin-H\"ormander multiplier theorem on the sphere was used again. Finally, from \fref{lp1}, \fref{norme1}, \fref{norme2} and \fref{em1}, we deduce \fref{pois2}.

\bs
\ni
{\em Step 3. Proof of \fref{pois1}}. Let $s\geq 1$. By the Mikhlin-H\"ormander multiplier theorem, we have
\bea
\left\|\lnar^s\left(V(|u|^2)u\right)\right\|_{L^2}&\leq & C\left\|V(|u|^2)\right\|_{L^\infty}\left\|\lnar^s u\right\|_{L^2}\nonumber\\&& +C\left\|r^{1/2}\lnar^s V(|u|^2)\right\|_{L^\infty_rL^4_\sigma }\left\|\frac{u}{r^{1/2}}\right\|_{L^2_rL^4_\sigma}\label{lp2}
\eea
Let us first estimate the $L^\infty$ norm of $V(|u|^2)$, written in spherical coodinates:
\begin{equation}
  \label{poissonsph}
  V(|u|^2)(r\sigma)=\frac{1}{4\pi}\int\frac{1}{|r\sigma-r'\sigma'|}|u(r'\sigma')|^2\,r'^2dr'd\sigma'\,.
\end{equation}
Since $\sigma$ and $\sigma'$ are unitary, we have $(\sigma-\sigma')\cdot(\sigma+\sigma')=0$, thus
\be
\label{identity}
|r\sigma-r'\sigma'|^2=\left(\frac{r+r'}{2}\right)^2\,|\sigma-\sigma'|^2
+ \left(\frac{r-r'}{2}\right)^2\,|\sigma+\sigma'|^2,
\ee
which yields
\begin{equation}
  \label{maj}
  \frac{1}{|r\sigma-r'\sigma'|}\leq \frac{2}{\max(r,r')}\,\frac{1}{|\sigma-\sigma'|}\,.
\end{equation}
This enables to estimate $V(|u|^2)$ by using again Lemma \ref{lemmeG}. For all $p>2$, we have
\bee
0\leq V(|u|^2)&\leq &\int_0^{+\infty} \frac{2}{r'}G(|u|^2(r',\cdot))\,r'^2dr'\\
&\leq &C\int_0^{+\infty} \frac{1}{r'}\left\|u(r',\cdot))\right\|_{L^{2p}_\sigma}^{2p/(2p-2)}\left\|u(r',\cdot))\right\|_{L^2_\sigma}^{(2p-4)/(2p-2)}\,r'^2dr'.
\eee
Hence, by splitting the integral on $r> 1/2$ and $r< 1/2$ and using the H\"older inequality,
\bee
0\leq V(|u|^2)&\leq &C\left\|u\un_{r>1/2}\right\|_{L^2_rL^{2p}_\sigma}^{2p/(2p-2)}\left\|u\un_{r>1/2}\right\|_{L^2}^{(2p-4)/(2p-2)}\\
&&+C\left\|\frac{u}{r}\un_{r<1/2}\right\|_{L^2_rL^{2p}_\sigma}\left\|u\un_{r<1/2}\right\|_{L^2_rL^{2p}_\sigma}^{2/(2p-2)}\left\|u\un_{r<1/2}\right\|_{L^2}^{(2p-4)/(2p-2)}\\
&\leq &C\|\lnar u\|_{L^2}^2+C\|u\|_{\calB^1}\left\|\lnar u\right\|_{L^2}^{2/(2p-2)}\left\|u\un_{r<1/2}\right\|_{L^2}^{(2p-4)/(2p-2)}
\eee
where we have used again the Sobolev embeddings \fref{em1} and \fref{em2}. Finally, \fref{confresu} yields
$$
0\leq V(|u|^2)\leq C\|\lnar u\|_{L^2}^2+C\eps^{(1+\alpha/2)(1-\frac{2}{2p-2})}\|u\|_{\calB^1}^{2-\frac{2}{2p-2}}\left\|\lnar u\right\|_{L^2}^{2/(2p-2)}.
$$
To conclude, we choose $p$ large enough such that $(1+\alpha/2)(1-\frac{2}{2p-2})>2-\frac{2}{2p-2}$. This is possible thanks to the assumption $\alpha>2$ and we obtain finally, using the Young inequality,
\be
\label{norme4}
0\leq V(|u|^2)\leq C\|\lnar u\|_{L^2}^2+C\eps^{2(1+\gamma)}\|u\|_{\calB^1}^2.
\ee

Let us now estimate the $L^\infty_rL^4_\sigma$ norm of $r^{1/2}\lnar^s V(|u|^2)$. Since $V(|u|^2)=(-\Delta)^{-1}(|u|^2),$
the operator $-\Delta_\sigma$ commutes with $V$ and we have
$$\lnar^s V(|u|^2)=V\left(\lnar^s(|u|^2)\right).$$
Therefore, we deduce from \fref{poissonsph}, from \fref{maj}, from \fref{es1}, and finally from the Mikhlin-H\"ormander theorem on the sphere that
\bea
\left\|r^{1/2}\lnar^sV(|u|^2)\right\|_{L^\infty_rL^4_\sigma}&\leq& C\int_0^{+\infty}\frac{1}{r^{1/2}}\left\|\lnar^s(|u|^2)(r,\cdot)\right\|_{L^{4/3}_\sigma}r^2dr\nonumber\\
&\leq&C\int_0^{+\infty}\left\|\lnar^su(r,\cdot)\right\|_{L^2_\sigma}\left\|\frac{u(r,\cdot)}{r^{1/2}}\right\|_{L^4_\sigma}r^2dr\nonumber\\
&\leq &C\left\|\lnar^su\right\|_{L^2}\left\|\frac{u}{r^{1/2}}\right\|_{L^2_rL^4_\sigma}.\label{norme3}
\eea
Finally, we deduce \fref{pois1} from \fref{lp2}, \fref{norme4}, \fref{norme3} and \fref{em3}. The proofs of \fref{poisuv2} and \fref{poisuv1} are very similar to the proofs of \fref{pois2} and \fref{pois1}, with $s=1$ and we leave the details to the reader.

\bs
\ni
{\em Step 4. Proof of \fref{poisdiff}}. In order to estimate the $L^2$ norm of $\lnar\left(V(|u|^2)u-\GG(|u|^2)u\right)$, let us introduce
$$\delta(r,r',\sigma,\sigma'):=\frac{1}{|\sigma-\sigma'|}-\frac{1}{|r\sigma-r'\sigma'|},$$
such that, for all function $w$, we have
\be
\label{ecritdelta}
(V(w)-\GG(w))(r\sigma)=-\int_0^{+\infty}\int_{\SS^2}\delta(r,r',\sigma,\sigma')w(r'\sigma')r'^2dr'd\sigma'.
\ee
From \fref{maj}, one deduces directly that
\be
\label{majord}
|\delta(r,r',\sigma,\sigma')|\leq \frac{1}{|\sigma-\sigma'|}\left(1+\frac{2}{\max(r,r')}\right).
\ee
Moreover, by using \fref{identity}, one can decompose $\delta(r,r',\sigma,\sigma')=\delta_1+\delta_2$
with
$$\delta_1=\frac{((r+r')^2-4)|\sigma-\sigma'|^2}{4|r\sigma-r'\sigma'||\sigma-\sigma'|(|r\sigma-r'\sigma'|+|\sigma-\sigma'|)}$$
and
$$\delta_2=\frac{(r-r')^2|\sigma+\sigma'|^2}{4|r\sigma-r'\sigma'||\sigma-\sigma'|(|r\sigma-r'\sigma'|+|\sigma-\sigma'|)}.$$
Let $\chi$ be a continuous  function on $\RR $, positive for $z\ne 0$, such that $\chi(z)\sim |z|$ as $z\to 0$ and $\chi(z)=1$ for $|z|\geq 2$. From \fref{maj}, one deduces that
$$|\delta_1|\leq C\frac{(\chi(r-1)+\chi(r'-1))}{|\sigma-\sigma'|}\left(1+\frac{1}{\max(r,r')}\right).$$
Furthermore, since by \fref{identity} we have
$$|r-r'||\sigma+\sigma'|\leq 2|r\sigma-r'\sigma'|,$$
we can estimate $\delta_2$ as follows:
$$
|\delta_2|\leq \frac{|r-r'||\sigma+\sigma'|}{2|r\sigma-r'\sigma'||\sigma-\sigma'|}\leq C\frac{(\chi(r-1)+\chi(r'-1))}{|\sigma-\sigma'|^2}\left(1+\frac{1}{\max(r,r')}\right).
$$
We have thus proved that
$$|\delta(r,r',\sigma,\sigma')|\leq C\frac{(\chi(r-1)+\chi(r'-1))}{|\sigma-\sigma'|^2}\left(1+\frac{1}{\max(r,r')}\right).$$
By interpolating between this inequality and \fref{majord}, one gets finally, for all $\eta\in(0,1),$
\be
\label{estidelta}
\left|\delta(r,r',\sigma,\sigma')\right|\leq C\frac{\kappa(r,r')}{|\sigma-\sigma'|^{1+\eta}},
\ee
where we have set
$$\kappa(r,r')=\left((\chi(r-1))^\eta+(\chi(r'-1))^\eta\right)\left(1+\frac{1}{\max(r,r')}\right).$$

Next, we claim that one can adapt Steps 2 and 3 in order to prove that, for $\eta>0$ small enough,
\bea
&&\left\|\lnar\left(V(|u|^2)u-\GG(|u|^2)u\right)\right\|_{L^2}\nonumber\\
&&\qquad \leq C\|\chi(r-1)) \lnar u\|_{L^2}^\eta \|\lnar u\|_{L^2}^{3-\eta}+C\eps ^\gamma\left(\|\lnar u\|_{L^2}^3+\eps^3\|u\|_{\calB^1}^3\right).\qquad \qquad \label{chi}
\eea
Let us prove this claim. We have
\bea
&&\left\|\lnar\left(V(|u|^2)u-\GG(|u|^2)u\right)\right\|_{L^2}\nonumber\\
&&\qquad   \leq  C\left(\int_0^{+\infty}\left\|V(|u|^2)-\GG(|u|^2)\right\|_{L^\infty_\sigma}^2\left\|\lnar u\right\|_{L^2_\sigma}^2r^2dr\right)^{1/2}\nonumber\\&&\qquad \quad  +C\left(\int_0^{+\infty}\left\|V(\lnar(|u|^2))-\GG(\lnar(|u|^2))\right\|_{L^4_\sigma}^2\left\|u\right\|_{L^4_\sigma}^2r^2dr\right)^{1/2}.\label{mik25}
\eea
By \fref{ecritdelta} and \fref{estidelta}, we have
\bee
&&\left|V(|u|^2)-\GG(|u|^2)\right|\leq C\int_0^{+\infty}\int_{\SS^2}\frac{\kappa(r,r')}{|\sigma-\sigma'|^{1+\eta}}|u(r'\sigma')|^2\,d\sigma'r'^2dr'\\
&&\quad \leq C\int_0^{+\infty}\left((\chi(r-1))^\eta+(\chi(r'-1))^\eta\right)\left(1+\frac{1}{r'}\right)\|u(r'\cdot)\|_{L^{2p}_\sigma}^{\frac{(1+\eta)p}{p-1}}\|u(r'\cdot )\|_{L^2_\sigma}^{\frac{(1-\eta)p-2}{p-1}}r'^2dr'\\
&&\quad \leq  C\int_{1/2}^{+\infty}\left((\chi(r-1))^\eta+(\chi(r'-1))^\eta\right)\|u(r'\cdot)\|_{L^{2p}_\sigma}^{\frac{(1+\eta)p}{p-1}}\|u(r'\cdot )\|_{L^2_\sigma}^{\frac{(1-\eta)p-2}{p-1}}r'^2dr'\\
&&\quad \quad + C\int_0^{1/2}\frac{1}{r'}\|u(r'\cdot)\|_{L^{2p}_\sigma}^{\frac{(1+\eta)p}{p-1}}\|u(r'\cdot )\|_{L^2_\sigma}^{\frac{(1-\eta)p-2}{p-1}}r'^2dr'
\eee
where we used the H\"older inequality and where $p>\frac{2}{1-\eta}$. The first integral in the last inequality can be bounded as in Step 2, by using the Sobolev embedding \fref{em1}:
\bee 
&&\int_{1/2}^{+\infty}\left((\chi(r-1))^\eta+(\chi(r'-1))^\eta\right)\|u(r'\cdot)\|_{L^{2p}_\sigma}^{\frac{(1+\eta)p}{p-1}}\|u(r'\cdot )\|_{L^2_\sigma}^{\frac{(1-\eta)p-2}{p-1}}r'^2dr'\\
&&\qquad \qquad \qquad \qquad
\qquad \leq (\chi(r-1))^\eta \|\lnar u\|_{L^2}^2+\|\chi(r'-1))^\eta \lnar u\|_{L^2}\|\lnar u\|_{L^2}.
\eee
The second integral can be bounded as in Step 3: by using \fref{em1}, \fref{em2} and \fref{confresu}, we get, for $\eta$ small enough and $p$ large enough,
$$\int_0^{1/2}\frac{1}{r'}\|u(r'\cdot)\|_{L^{2p}_\sigma}^{\frac{(1+\eta)p}{p-1}}\|u(r'\cdot )\|_{L^2_\sigma}^{\frac{(1-\eta)p-2}{p-1}}r'^2dr'
\leq C\eps ^\gamma\left(\|\lnar u\|_{L^2}^2+\eps^2\|u\|_{\calB^1}^2\right).$$
Finally, the first term in \fref{mik25} can be estimated as follows:
\bea
&&\left(\int_0^{+\infty}\left\|V(|u|^2)-\GG(|u|^2)\right\|_{L^\infty_\sigma}^2\left\|\lnar u\right\|_{L^2_\sigma}^2r^2dr\right)^{1/2}\nonumber\\
&&\qquad \leq C\|\chi(r'-1))^\eta \lnar u\|_{L^2}\|\lnar u\|_{L^2}^2+C\eps ^\gamma\left(\|\lnar u\|_{L^2}^3+\eps^3\|u\|_{\calB^1}^3\right).\qquad\quad
\label{mik26}
\eea
Let us now estimate the second term in \fref{mik25}. Setting
$$w(r',\sigma)=\int_{\SS^2}\frac{1}{|\sigma-\sigma'|^{1+\eta}}\lnar(|u|^2)(r'\sigma')d\sigma',$$
we deduce from \fref{ecritdelta}, \fref{estidelta} and from the Minkowski inequality  that
\bee
&&\int_0^{+\infty}\left\|V(\lnar(|u|^2))-\GG(\lnar(|u|^2))\right\|_{L^4_\sigma }^2\left\|u\right\|_{L^4_\sigma}^2r^2dr\\
&& \quad \leq C\left(\int_0^{+\infty}(1+\frac{1}{\sqrt{r'}})\chi(r'-1)^\eta\|w(r',\cdot)\|_{L^4_\sigma}r'^2dr'\right)^2\left(\int_0^{+\infty}(1+\frac{1}{\sqrt{r}})^2\left\|u(r\cdot)\right\|_{L^4_\sigma}^2r^2dr\right)\\
&&\quad +C\left(\int_0^{+\infty}(1+\frac{1}{\sqrt{r'}})\|w(r',\cdot)\|_{L^4_\sigma}r'^2dr'\right)^2\left(\int_0^{+\infty}(1+\frac{1}{\sqrt{r}})^2\chi(r-1)^{2\eta}\left\|u(r\cdot)\right\|_{L^4_\sigma}^2r^2dr\right).
\eee
By Hardy-Littlewood-Sobolev, one has
$$\|w(r',\cdot)\|_{L^4_\sigma}\leq C\|\lnar (|u|^2)\|_{L^\frac{4}{3-2\eta}_\sigma}\leq C\|\lnar u\|_{L^2}\|u\|_{L^\frac{4}{1-2\eta}_\sigma}$$
and, by adapting the proof of \fref{em3}, one can prove that there exists $\gamma>0$ such that, for $\eta$ small enough,
\be
\label{em3bis}
\left\|\frac{u}{\sqrt{r}}\right\|_{L^2_rL^\frac{4}{1-2\eta}_\sigma}\leq C\|\lnar u\|_{L^2}+C\eps^{1+\gamma}\|u\|_{\calB^1}.
\ee
Finally, using \fref{em1} and \fref{em3bis}, one gets
\bea
&&\left(\int_0^{+\infty}\left\|V(\lnar(|u|^2))-\GG(\lnar(|u|^2))\right\|_{L^4_\sigma }^2\left\|u\right\|_{L^4_\sigma}^2r^2dr\right)^{1/2}\nonumber\\
&& \quad \qquad \qquad \leq C\|\chi(r-1))^\eta \lnar u\|_{L^2}\left (\|\lnar u\|_{L^2}^2+C\eps^{2+2\gamma}\|u\|_{\calB^1}^2\right )\quad 
\label{mik27}
\eea
and the claim \fref{chi} can be deduced from \fref{mik25}, \fref{mik26}, \fref{mik27}, the Young inequality and the H\"older inequality.

We are in position to conclude the proof of \fref{poisdiff}. By using an interpolation inequality on the sphere and the properties of the truncation function $\chi$, we obtain
\bee
\|\chi(r-1)\lnar u\|_{L^2}&\leq& C\|\chi(r-1)u\|_{L^2}^{1/2}\|\lnar^2 u\|^{1/2}_{L^2}\\
&\leq& C\||r-1|^{\alpha/2}u\|_{L^2}^{1/\alpha}\|u\|_{L^2}^{1/2-1/\alpha}\|\lnar^2 u\|^{1/2}_{L^2}\\
&\leq&C\eps^{1/2}\left\|\Vc\left(\frac{r-1}{\eps}\right)^{1/2} u\right\|^{1/\alpha}\|u\|_{L^2}^{1/2-1/\alpha}\|\lnar^2 u\|^{1/2}_{L^2}\\
&\leq& C\eps^{1/2}\left(\eps \|u\|_{\calB^1}\right)^{1/\alpha}\|\lnar^2 u\|^{{1-1/\alpha}}_{L^2}
\eee
where we used Assumption \ref{H1} and the fact that $\alpha\geq 2$. Finally, inserting this estimate in \fref{chi} and using the Young inequality leads to \fref{poisdiff}, up to changing the value of $\gamma$.
\end{proof}

Let us now state another estimate, where we recall that the operator $H_r$ was defined in \fref{Hr}.
\begin{lemma}
\label{noyaupoisson2}
Let $u$ be such that $\lnar u\in L^2$ and $H_r^{s/2}\lnar u\in L^2$ with $s\geq 1$ an integer, then
\be
\left\|(1+\eps^2 H_r)^{s/2}\lnar\left(\GG(|u|^2)u\right)\right\|_{L^2}\leq C\|\lnar u\|_{L^2}^2\|(1+\eps^2 H_r)^{s/2} \lnar u\|_{L^2}.\label{pois3}
\ee
\end{lemma}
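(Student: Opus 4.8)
The plan is to exploit the fact that the nonlocal operator $\GG$ acts only in the angular variable $\sigma$, while the transversal operator $H_r$ acts only in the radial variable $r$: the radial weight $(1+\eps^2 H_r)^{s/2}$ can then be commuted through the nonlinearity, reducing \fref{pois3} to a bilinear version of \fref{pois2} with $s=1$.

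First I would record the commutation properties. The function $\GG(|u|^2)$ depends only on $\sigma$, as is clear from \fref{G2}; since $H_r=-\partial_r^2-\frac2r\partial_r+\Vc^\eps(r)$ is a differential operator in $r$ alone, it commutes with multiplication by $\GG(|u|^2)$. Moreover $H_r$ commutes with $\Delta_\sigma$ (this was already used in the introduction, and follows from $H_r=\Delta-\frac1{r^2}\Delta_\sigma+\Vc^\eps(r)$ together with $[\Delta,\Delta_\sigma]=0$), hence with $\lnar$. By the spectral theorem for the self-adjoint operator $H_r$, the weight $(1+\eps^2 H_r)^{s/2}$ therefore commutes both with $\lnar$ and with multiplication by $\GG(|u|^2)$ (for $s$ even this is a polynomial identity; for $s$ odd one uses that bounded operators commuting with $H_r$ commute with its functional calculus). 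Setting $v=(1+\eps^2 H_r)^{s/2}u$, one obtains
$$(1+\eps^2 H_r)^{s/2}\lnar\left(\GG(|u|^2)u\right)=\lnar\left(\GG(|u|^2)\,v\right),$$
and, since $(1+\eps^2 H_r)^{s/2}$ commutes with $\lnar$, also $\|\lnar v\|_{L^2}=\|(1+\eps^2 H_r)^{s/2}\lnar u\|_{L^2}$. Thus \fref{pois3} follows once we prove
$$\|\lnar(\GG(|u|^2)v)\|_{L^2}\leq C\|\lnar u\|_{L^2}^2\|\lnar v\|_{L^2}$$
for arbitrary $u,v$ with $\lnar u,\lnar v\in L^2$.

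This last inequality is proved exactly as Step 2 in the proof of Lemma \ref{noyaupoisson}, simply carrying a second function $v$ through the computation. The Littlewood--Paley decomposition and the Mikhlin--H\"ormander multiplier theorem on the sphere give, for $p>2$ and $\frac1q+\frac1p=\frac12$,
$$\|\lnar(\GG(|u|^2)v)\|_{L^2}\leq C\|\GG(|u|^2)\|_{L^\infty}\|\lnar v\|_{L^2}+C\|\lnar\GG(|u|^2)\|_{L^p_\sigma}\|v\|_{L^2_rL^q_\sigma}.$$
Now $\|\GG(|u|^2)\|_{L^\infty}\leq C\|\lnar u\|_{L^2}^2$ by \fref{norme1}; the bound $\|\lnar\GG(|u|^2)\|_{L^p_\sigma}\leq C\|\lnar u\|_{L^2}\|u\|_{L^2_rL^p_\sigma}\leq C\|\lnar u\|_{L^2}^2$ is \fref{norme2} with $s=1$ combined with \fref{em1}; and $\|v\|_{L^2_rL^q_\sigma}\leq C\|\lnar v\|_{L^2}$ is the Sobolev embedding \fref{em1}. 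Combining the three bounds yields the claimed bilinear estimate, hence \fref{pois3}.

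The one point requiring a little care is the reduction step: one should first establish the operator identity and the estimate for $u$ smooth and suitably decaying, so that all operators act within their domains, and then pass to the general case by density, using that the right-hand side of \fref{pois3} controls the relevant quantities in the limiting argument. Beyond this bookkeeping, no new analytic input is needed — the radial weight $(1+\eps^2 H_r)^{s/2}$ is \emph{transparent} to the angular nonlinearity $\GG$, which is the structural feature that will make the limit system \fref{schrodlim} tractable.
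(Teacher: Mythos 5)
Your argument is correct and follows essentially the same route as the paper: since $H_r$ acts only in $r$ while $\GG(|u|^2)$ and $\lnar$ act only in $\sigma$, the weight $(1+\eps^2 H_r)^{s/2}$ commutes through the nonlinearity, and the estimate reduces to the bilinear variant $\|\lnar(\GG(|u|^2)v)\|_{L^2}\leq C\|\lnar u\|_{L^2}^2\|\lnar v\|_{L^2}$, proved exactly as \fref{pois2} with $s=1$ via \fref{norme1}, \fref{norme2} and \fref{em1}. Only a cosmetic slip: in spherical coordinates $H_r=-\Delta+\frac{1}{r^2}\Delta_\sigma+\Vc^\eps$, not $\Delta-\frac{1}{r^2}\Delta_\sigma+\Vc^\eps$, though the commutation with $\Delta_\sigma$ you need is immediate anyway since $H_r$ involves only the radial variable.
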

\begin{proof}
Since the operator $H_r$ only acts on the variable $r$ and since $\GG$ is independent of the variable $r$, we have, for all $s\geq 0$,
\bee
\|(1+\eps^2 H_r)^{s/2}\lnar\left(\GG(|u|^2)u\right)\|_{L^2}&=&\|\lnar\left(\GG(|u|^2)(1+\eps^2 H_r)^{s/2}u\right)\|_{L^2}\nonumber\\
&\leq & \|\lnar u\|_{L^2}^2 \|(1+\eps^2 H_r)^{s/2}\lnar u\|_{L^2},\label{j1}
\eee
where we used \fref{norme1} and the following inequality, that can be proved as \fref{pois2} with $s=1$:
$$
\left\|\lnar\left(\GG(|u|^2)v\right)\right\|_{L^2}\leq C\|\lnar u\|_{L^2}^2\|\lnar v\|_{L^2}.
$$
\end{proof}

We end this section with the following lemma, which will enable to deal with more convenient norms than $\left\|(1+\eps^2 H_r)^{s/2}u\right\|_{L^2}$.
\begin{lemma}
\label{lemequivnorm}
Assume that the confinement potential satisfies Assumption \pref{H1} and let $H_r$ be defined by \fref{Hr}. Then $H_r$ is a positive selfadjoint operator on $L^2(\RR_+,r^2dr)$ and, for every integer $s\geq 1$,
$$D((1+H_r)^{s/2})=\left\{u\in L^2(\RR_+,r^2dr):\, (\Vc^\eps)^{\frac{s-k}{2}}\pa_r^k(ru)\in L^2(\RR_+,dr), \,0\leq k\leq s\right\}.$$
Moreover, the following norms are equivalent, with constants independent of $\eps$:
$$\left\|(1+\eps^2 H_r)^{\frac{s}{2}}u\right\|_{L^2},$$
$$\|u\|_{L^2}+\eps^s\left\|\frac{1}{r}\pa_r^s(ru)\right\|_{L^2}+\eps^s\|(\Vc^\eps)^{\frac{s}{2}}u\|_{L^2},$$
$$\|u\|_{L^2}+\eps^s\sum_{k=0}^s\left\|(\Vc^\eps)^{\frac{s-k}{2}}\frac{1}{r}\pa_r^k(ru)\right\|_{L^2}\quad \mbox{and}\quad \|u\|_{L^2}+\eps^s\sum_{k=0}^s\left\|\frac{1}{r}\pa_r^k\left(r(\Vc^\eps)^{\frac{s-k}{2}}u\right)\right\|_{L^2},$$
where $\|\cdot\|_{L^2}$ denotes the $L^2(\RR_+,r^2dr)$ norm.
\end{lemma}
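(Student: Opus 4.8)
The multiplication map $u\mapsto v:=ru$ is a unitary isomorphism of $L^2(\RR_+,r^2dr)$ onto $L^2(\RR_+,dr)$, and, since $-\frac1{r^2}\pa_r(r^2\pa_r u)=-\frac1r\pa_r^2(ru)$, it conjugates $H_r$ into the half-line Schr\"odinger operator $-\pa_r^2+\Vc^\eps(r)$ on $L^2(\RR_+,dr)$, with the Dirichlet condition at the origin that makes $v/r$ a genuine element of $L^2(\RR^3)$; self-adjointness and compactness of the resolvent were recalled after \fref{Hr}, and positivity follows from the associated form $\langle H_ru,u\rangle=\|\pa_rv\|_{L^2(dr)}^2+\|(\Vc^\eps)^{1/2}v\|_{L^2(dr)}^2\geq0$. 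Writing $A:=\eps\pa_r$ and $B_\eps:=\eps(\Vc^\eps)^{1/2}$, so that $1+\eps^2H_r=1+A^*A+B_\eps^2$ (with $A^*=-A$ on functions vanishing at the origin), a direct computation shows that $\eps^s\|(\Vc^\eps)^{\frac{s-k}2}\frac1r\pa_r^k(ru)\|_{L^2(r^2dr)}=\|B_\eps^{\,s-k}A^kv\|_{L^2(dr)}$ and $\eps^s\|\frac1r\pa_r^k(r(\Vc^\eps)^{\frac{s-k}2}u)\|_{L^2(r^2dr)}=\|A^kB_\eps^{\,s-k}v\|_{L^2(dr)}$. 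Hence the statement reduces to the norm equivalences on $L^2(\RR_+,dr)$, with constants independent of $\eps\in(0,1]$,
$$\|(1+\eps^2H_r)^{s/2}v\|\ \simeq\ \|v\|+\|A^sv\|+\|B_\eps^sv\|\ \simeq\ \|v\|+\sum_{k=0}^s\|B_\eps^{\,s-k}A^kv\|\ \simeq\ \|v\|+\sum_{k=0}^s\|A^kB_\eps^{\,s-k}v\|,$$
together with the identification of $D((1+\eps^2H_r)^{s/2})$ with the completion of a dense core under (any of) these equivalent norms.

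\textbf{The commutator mechanism.} The engine is the following consequence of \fref{reinfc} and the scaling \fref{potconf}: since $\pa_r^i\Vc^\eps=\eps^{-(i+2)}\Vc^{(i)}(\tfrac{r-1}\eps)$ and $|\Vc^{(i)}|\leq C_i\Vc$, one has $|\pa_r^i\Vc^\eps|\leq C_i\,\eps^{-i}\Vc^\eps$, so that Leibniz' rule yields the operator identity
$$[A^j,B_\eps^2]=\sum_{i=1}^{j}\binom ji\,\big(\eps^{\,i+2}\pa_r^i\Vc^\eps\big)\,A^{j-i},\qquad\text{with}\quad\big|\eps^{\,i+2}\pa_r^i\Vc^\eps\big|\leq C_i\,B_\eps^2 .$$
Thus commuting a power of $A$ past $B_\eps^2$ produces only terms of the form (a multiplication operator dominated by $B_\eps^2$) times (a strictly lower power of $A$), with constants carrying \emph{no negative power of} $\eps$; symmetrically, using $A^*=-A$, commutators of $B_\eps$ with polynomials in $A$ are $\eps$-uniformly controlled by words of strictly smaller degree. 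Moreover $[A,A^*A]=0$, so $A$ passes freely through every polynomial in $A^*A$. This is precisely where Assumption \ref{H1} is used; in contrast with Lemma \ref{noyaupoisson}, the value of $\alpha$ plays no role here.

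\textbf{Induction on $s$.} For $s=1$ the quadratic-form identity $\|(1+\eps^2H_r)^{1/2}v\|^2=\|v\|^2+\|Av\|^2+\|B_\eps v\|^2$ gives the equivalence and identifies $D((1+\eps^2H_r)^{1/2})$ with the form domain. Assume the three equivalences for all integers $\leq s-1$. Since $s$ is an integer, $\|(1+\eps^2H_r)^{s/2}v\|^2=\langle(1+A^*A+B_\eps^2)^sv,v\rangle$; expanding this $s$-th power into words in $\{1,A^*A,B_\eps^2\}$, integrating by parts to split each word in the middle, and reorganizing the two halves — moving the $A$'s freely across the $A^*A$'s and, at the cost of the lower-order commutators above, across the $B_\eps^2$'s — one writes the right-hand side as $\sum_{k=0}^sc_k\|B_\eps^{\,s-k}A^kv\|^2$ with $c_k>0$, plus a finite remainder consisting of terms of total degree $\leq 2s-1$. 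For the upper bound, each such term is estimated by products of the quantities $\|v\|,\|A^jv\|,\|B_\eps^jv\|$ ($j\leq s$), using the anisotropic interpolation $\|B_\eps^{\,s-k}A^kv\|\leq C(\|v\|+\|A^sv\|+\|B_\eps^sv\|)$ — which itself follows from the same integration-by-parts/commutator bookkeeping, or from the pointwise bound $\lambda^a\mu^b\leq\lambda^n+\mu^n+1$ ($a+b\leq n$, $\lambda,\mu\geq0$) applied to the almost-commuting pair $A^*A,B_\eps^2$ — together with the induction hypothesis for exponent $s-1$. For the lower bound, the same remainder is absorbed into the positive leading and sub-leading terms by Young's inequality and the induction hypothesis, if necessary after adding a large fixed multiple of $\|(1+\eps^2H_r)^{(s-1)/2}v\|^2$, which is itself dominated by $\|(1+\eps^2H_r)^{s/2}v\|^2$. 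This establishes all three equivalences for exponent $s$. Having them on a dense core, $D((1+\eps^2H_r)^{s/2})$ is by definition its completion under the graph norm, hence equals the space in the statement; and since for fixed $\eps$ this space coincides with $D((1+H_r)^{s/2})$, the description of the latter follows as well.

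\textbf{Main obstacle.} The only genuine difficulty is \emph{uniformity in} $\eps$: one must check at each step of the induction that every commutator generated is absorbed by lower-order quantities with constants free of negative powers of $\eps$, which is exactly what the rescaled form of \fref{reinfc} (together with $\Vc^\eps\geq0$) guarantees. Everything else is routine: the combinatorial bookkeeping of the mixed operators $B_\eps^{\,s-k}A^k$, the anisotropic interpolation, and the harmless boundary terms at $r=0$ in the integrations by parts, which vanish because $v=ru$ and the relevant derivatives do — so it suffices to argue on a dense subspace of smooth functions and pass to the limit.
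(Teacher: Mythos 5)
Your strategy is sound and reaches the same conclusion, but it is implemented quite differently from the paper. After the common reduction $v=ru$ to the half-line operator $-\pa_r^2+\Vc^\eps$, the paper does not track $\eps$ through commutators: it performs the dilation $r'=(r-1)/\eps$, which turns all the $\eps$-weighted norms into $\eps$-free norms for the fixed potential $\Vc$ on $(-\frac1\eps,+\infty)$, and then invokes an $\eps$-independent one-dimensional result (Proposition \ref{propapp} in Appendix A) for potentials $W\ge 1$ with $|W^{(k)}|\le C_kW$, whose constants depend only on the $C_k$. That proposition is proved by an induction of a different flavour than yours: the cases $s=1,2$ are done by hand (including a proof of selfadjointness via $\ker A^*=\{0\}$ and a convexity argument), and the step from $s-2$ to $s$ uses the characterization that $u\in D(A^{s/2})$ iff $u\in D(A)$ and $Au\in D(A^{(s-2)/2})$, together with a Leibniz expansion of $(Au)^{(k)}$; this route also produces the boundary conditions $(-\frac{d^2}{dx^2}+W)^pu(0)=0$ entering the precise description of $D(A^{s/2})$. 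Your version --- keeping $\eps$, setting $A=\eps\pa_r$, $B_\eps=\eps(\Vc^\eps)^{1/2}$, and expanding $\langle(1+A^*A+B_\eps^2)^sv,v\rangle$ with the $\eps$-uniform commutator bound $|\eps^{i+2}\pa_r^i\Vc^\eps|\le C_iB_\eps^2$ --- is a legitimate alternative that isolates the same key input \fref{reinfc}; the paper's rescaling buys exactly the uniformity bookkeeping you do by hand, at the price of working on an $\eps$-dependent interval.

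Two points in your sketch need repair or completion. First, the aside that the interpolation $\|B_\eps^{s-k}A^kv\|\le C(\|v\|+\|A^sv\|+\|B_\eps^sv\|)$ could be deduced from the scalar bound $\lambda^a\mu^b\le\lambda^n+\mu^n+1$ is not valid for the non-commuting pair $A^*A$, $B_\eps^2$ (there is no joint functional calculus); only the commutator-bookkeeping route you also mention works, and it must be carried out with the same care as the main expansion, since this inequality is itself one of the asserted norm equivalences rather than an off-the-shelf fact. Second, the identification of the domain is thinner than you state: equivalence of norms on a dense core identifies $D((1+\eps^2H_r)^{s/2})$ with the closure of that core in the graph norm, but equality with the space defined by finiteness of the weighted norms still requires a density argument in that space (cutoff and regularization, using the growth of $\Vc^\eps$), or else the paper's recursive characterization of membership in $D(A^{s/2})$, which avoids density altogether and, in doing so, makes the boundary behaviour at the origin explicit instead of dismissing it as harmless.
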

 Note that proving this result is equivalent to proving the equivalence of the following norms in dimension one:
$$\left\|(1+\eps^2 (-\pa_r^2+\Vc^\eps))^{\frac{s}{2}}u\right\|_{L^2(\RR_+,dr)},$$
$$\|u\|_{L^2(\RR_+,dr)}+\eps^s\left\|\pa_r^su\right\|_{L^2(\RR_+,dr)}+\eps^s\|(\Vc^\eps)^{\frac{s}{2}}u\|_{L^2(\RR_+,dr)},$$
$$\|u\|_{L^2(\RR_+,dr)}+\eps^s\sum_{k=0}^s\left\|(\Vc^\eps)^{\frac{s-k}{2}}\pa_r^k u\right\|_{L^2(\RR_+,dr)},$$
$$\|u\|_{L^2(\RR_+,dr)}+\eps^s\sum_{k=0}^s\left\|\pa_r^k\left(\Vc^\eps)^{\frac{s-k}{2}}u\right)\right\|_{L^2(\RR_+,dr)}.$$
Now, recalling that $\Vc^\eps(r)=\frac{1}{\eps^2}\Vc(\frac{r-1}{\eps})$, let us apply the dilation $r'=\frac{r-1}{\eps}.$ The above norms become
$$\left\|(1-\pa_r^2+\Vc)^{\frac{s}{2}}u\right\|_{L^2(]-\frac{1}{\eps},+\infty[)},$$
$$\|u\|_{L^2(]-\frac{1}{\eps},+\infty[)}+\left\|\pa_r^su\right\|_{L^2(]-\frac{1}{\eps},+\infty[)}+\|(\Vc)^{\frac{s}{2}}u\|_{L^2(]-\frac{1}{\eps},+\infty[)},$$
$$\|u\|_{L^2(]-\frac{1}{\eps},+\infty[)}+\sum_{k=0}^s\left\|(\Vc)^{\frac{s-k}{2}}\pa_r^k u\right\|_{L^2(]-\frac{1}{\eps},+\infty[)},$$
$$\|u\|_{L^2(]-\frac{1}{\eps},+\infty[)}+\sum_{k=0}^s\left\|\pa_r^k\left(\Vc)^{\frac{s-k}{2}}u\right)\right\|_{L^2(]-\frac{1}{\eps},+\infty[)}.$$
Hence, Lemma \ref{lemequivnorm} is a consequence of Proposition \ref{propapp} proved in  Appendix A.

\subsection{Approximation by an intermediate model}

In this subsection, we make a first step towards Theorem \ref{thm3}. We obtain a priori estimates for the singularly perturbed system \ref{schrod} and prove that it can be approximated by the following system, where we only pass to the limit in the nonlinear term:
\begin{equation}
  \label{schrodinter}
  i\pa_tw^\eps=-\Delta w^\eps+\Vc^\eps w^\eps+\GG\left(|w^\eps|^2\right)w^\eps,\qquad w^\eps(t=0)=u_0^\eps\,,
\end{equation}
where $\GG$ is defined by \fref{G2}.

\begin{proposition}
\label{prop1}
Assume that the confinement potential and the initial data satisfy Assumptions \pref{H1} and \pref{Hinit}. Then the following holds true.\\
(i) For all $\eps\in(0,1]$,  \fref{schrod} admits a unique solution $u^\eps$ in the energy space $\calC^0(\RR,\calB^1)$. Moreover, there exists $T>0$ such that
\be
\label{estinter2}
\sup_{\eps \in(0,1]}\left(\eps\|u^\eps\|_{L^\infty([-T,T],\calB^1)}+\|\lnar u^\eps\|_{L^\infty([-T,T],L^2)}\right)<+\infty.
\ee
(ii) There exists $T>0$ such that, for all $\eps\in(0,1]$, \fref{schrodinter} admits a unique solution $w^\eps\in \calC^0([-T,T],\calB^1)$ with $\lnar w^\eps\in \calC^0([-T,T],L^2)$, and with a uniform bound:
\be
\label{estinter4}
\sup_{\eps \in(0,1]}\left(\eps\|w^\eps\|_{L^\infty([-T,T],\calB^1)}+\|\lnar w^\eps\|_{L^\infty([-T,T],L^2)}\right)<+\infty.
\ee
(iii)
Assume that $\eps_0\in (0,1]$ and $T>0$ are such that
\be
\label{estinter4bis}
\sup_{\eps \in(0,\eps_0]}\left(\eps\|w^\eps\|_{L^\infty([-T,T],\calB^1)}+\|\lnar w^\eps\|_{L^\infty([-T,T],L^2)}\right)<+\infty.
\ee
Then, one has
\be
\label{estinter3}
\lim_{\eps\to 0}\|\lnar (u^\eps-w^\eps)(t)\|_{L^\infty([-T,T],L^2)}=0.
\ee
\end{proposition}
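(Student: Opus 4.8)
For fixed $\eps$, the well-posedness of \fref{schrod} in the energy space $\calB^1$ is classical, as in the potential-free case \cite{bm,ilz}: the Hartree nonlinearity $u\mapsto V(|u|^2)u$ is locally Lipschitz on $\calB^1$ (the $|x|^{-1}$ kernel is energy-subcritical in dimension three) and $e^{-it(-\Delta+\Vc^\eps)}$ is unitary on $L^2$ and on $\calB^1\simeq D((-\Delta+\Vc^\eps)^{1/2})$, so a fixed point gives a local solution, global by the conservation laws \fref{consL2}--\fref{consenergy}. For \fref{estinter2}, the $L^2$ bound comes from \fref{consL2} and \fref{init1}, and $\eps\|u^\eps\|_{\calB^1}\le C$ from \fref{consenergy}: the energy has three nonnegative terms, so $\|u^\eps(t)\|_{\calB^1}^2$ is bounded by the initial energy, which is ${\mathcal O}(\eps^{-2})$ by \fref{init1} and a Gagliardo--Nirenberg estimate for $\|\na V(|u_0^\eps|^2)\|_{L^2}$. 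Since $\Vc^\eps$ depends only on $r$, $\lnar$ commutes with $-\Delta+\Vc^\eps$; taking the $L^2$ inner product of $\lnar$ applied to \fref{schrod} with $\lnar u^\eps$, the self-adjoint part drops, and \fref{pois1} with $s=1$ together with $\eps\|u^\eps\|_{\calB^1}\le C$ yields $\frac{d}{dt}\|\lnar u^\eps\|_{L^2}^2\le C(\|\lnar u^\eps\|_{L^2}^2+\eps^{2\gamma})\|\lnar u^\eps\|_{L^2}^2$; Gronwall on $[-T,T]$ with $T$ depending only on the constant in \fref{init1} finishes \fref{estinter2} (and $\lnar u^\eps\in\calC^0$ follows by persistence of regularity). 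Part (ii) is identical with $\GG$ replacing $V$: $\GG$ is $r$-independent (a function of $\Delta_\sigma$ composed with integration in $r$, cf.\ \fref{equivG}) and the estimates \fref{pois2}, \fref{poisuv2} have no $\eps$-loss, so a fixed point in $\calC([-T,T],\calB^1)\cap\{\lnar w\in\calC([-T,T],L^2)\}$ yields $w^\eps$, uniqueness via \fref{poisuv2}, and --- by the uniformity in \fref{init1} --- a uniform $T$ and the bound \fref{estinter4}.

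\textbf{Part (iii).} Set $\zeta^\eps=u^\eps-w^\eps$. Subtracting the two equations,
\[
i\pa_t\zeta^\eps=(-\Delta+\Vc^\eps)\zeta^\eps+\big(\GG(|u^\eps|^2)u^\eps-\GG(|w^\eps|^2)w^\eps\big)+\big(V(|u^\eps|^2)u^\eps-\GG(|u^\eps|^2)u^\eps\big),
\]
with $\zeta^\eps(0)=0$. Applying $\lnar$ and estimating in $L^2$ as before: the first nonlinear bracket is controlled by \fref{poisuv2} and the uniform bounds \fref{estinter2}, \fref{estinter4bis}, contributing $C\|\lnar\zeta^\eps\|_{L^2}$ inside a Gronwall inequality; the forcing $\lnar\big((V-\GG)(|u^\eps|^2)u^\eps\big)$ is handled by \fref{poisdiff}, which gives $C\eps^\gamma\big(\|\lnar^2u^\eps\|_{L^2}^3+\eps^3\|u^\eps\|_{\calB^1}^3\big)$. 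The second summand is $\le C\eps^\gamma$, but $\|\lnar^2u^\eps\|_{L^2}$ is not controlled uniformly in $\eps$ (only one $\sigma$-derivative is assumed on the data): \emph{this is the main obstacle}.

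I would overcome it by truncating frequencies. For $R>0$, set $u_{0,R}^\eps=\un_{\lnar\le R}\,\un_{\eps^2H_r\le R}\,u_0^\eps$ (the two projectors commute, since $\Delta_\sigma$ and $H_r$ commute, and both commute with $\lnar$), and let $u^{\eps,R}$, $w^{\eps,R}$ solve \fref{schrod}, \fref{schrodinter} with this data. Using Lemma \ref{lemequivnorm} one checks that $u_{0,R}^\eps$ still satisfies \fref{init1} uniformly in $\eps$, while in addition $\|\lnar^2u_{0,R}^\eps\|_{L^2}\le R\|\lnar u_0^\eps\|_{L^2}\le CR$; propagating this through \fref{schrod} by \fref{pois1} with $s=2$ --- whose prefactor is uniformly bounded by Part (i) applied to $u^{\eps,R}$ --- gives $\|\lnar^2u^{\eps,R}\|_{L^\infty([-T,T],L^2)}\le C(R)$ uniformly in $\eps$. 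The energy estimate above, run for $u^{\eps,R}-w^\eps$ (data differing by $u_{0,R}^\eps-u_0^\eps$), combined with \fref{poisdiff} and Gronwall, gives
\[
\|\lnar(u^{\eps,R}-w^\eps)\|_{L^\infty([-T,T],L^2)}\le C(T)\,\|\lnar(u_0^\eps-u_{0,R}^\eps)\|_{L^2}+C(R,T)\,\eps^\gamma,
\]
while a similar energy estimate for the pair $u^\eps,u^{\eps,R}$ of solutions of \fref{schrod} (using \fref{poisuv1} and the uniform bounds) gives $\|\lnar(u^\eps-u^{\eps,R})\|_{L^\infty([-T,T],L^2)}\le C(T)\|\lnar(u_0^\eps-u_{0,R}^\eps)\|_{L^2}$. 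Adding these, using $\|\lnar(u_0^\eps-u_{0,R}^\eps)\|_{L^2}\le\|\un_{\lnar>R}\lnar u_0^\eps\|_{L^2}+\|\un_{\eps^2H_r>R}\lnar u_0^\eps\|_{L^2}$, then letting $\eps\to0$ and $R\to+\infty$ and invoking \fref{init2} of Assumption \ref{Hinit}, yields \fref{estinter3}. Finally, in \fref{estinter3} the interval is that of \fref{estinter4bis}, possibly larger than the one Part (i) gives for $u^\eps$; since $\eps\|u^\eps\|_{\calB^1}\le C$ holds for all times by energy conservation, only the $\lnar$-bound must be bootstrapped, and the closeness estimate just obtained prevents it from degenerating before time $T$ --- a standard continuity argument.
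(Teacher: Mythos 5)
Your proposal is correct and follows essentially the same route as the paper: parts (i)--(ii) via conservation laws, commutation of $\lnar$ with the Hamiltonian, the tame estimates \fref{pois1}--\fref{pois2} and a bootstrap, and part (iii) via a regularization of the initial data (your spectral truncation is exactly how Assumption \fref{init2} is meant to be exploited) to gain the $\lnar^2$ control needed for \fref{poisdiff}, followed by Gronwall and a continuity argument pushing the comparison up to the time $T$ of \fref{estinter4bis}. The only difference is bookkeeping: you truncate only the data for $u^\eps$ and compare $u^{\eps,R}$ directly with $w^\eps$, whereas the paper regularizes both solutions and compares $u^{\eps,\delta}$ with $w^{\eps,\delta}$, which changes nothing essential.
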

\begin{proof}
The well-posedness of the Cauchy problem in the energy space $\calB^1$ for \fref{schrod}, for all fixed $\eps>0$, is very standard. It can be proved by using standard techniques: local in time existence by a Banach fixed-point procedure, then global existence thanks to the conservation laws \fref{consL2} and \fref{consenergy}. 

\bs
\ni
{\em Step 1: a priori estimate \fref{estinter2}.} Let us now prove the existence of $T>0$ such that the a priori estimate \fref{estinter2} holds. Thanks to Assumption \ref{Hinit}, we have
\be
\label{prempart}
\|\na u_0^\eps\|_{L^2}^2+\|(\Vc^\eps)^{1/2}u_0^\eps\|_{L^2}^2\leq \|u_0^\eps\|_{\calB^1}^2\leq \frac{C}{\eps^2}
\ee
and
$$\|\na V(|u_0^\eps|^2)\|_{L^2}^2=\int_{\RR^3} V(|u_0^\eps|^2)|u_0^\eps|^2dx\leq \|V(|u_0^\eps|^2)\|_{L^\infty}\|u_0^\eps\|^2_{L^2}\leq C,
$$
where we used \fref{norme4}. Thus, the energy conservation law \fref{consenergy} yields, for all $t$,
\be
\label{eee1}
\eps^2\|u^\eps(t)\|_{\calB^1}^2\leq \eps^2 \|u_0\|_{\calB^1}^2+\frac{\eps^2}{2}\|\na V(|u_0^\eps|^2)\|_{L^2}^2\leq C.
\ee
Apply now the operator $\lnar=\sqrt{1-\Delta_\sigma}$ to the Schr\"odinger equation \fref{schrod}. Since this operator commutes with the Hamiltonian, we get for all $t$
$$\begin{array}{ll}
\ds \|\lnar u^\eps(t)\|_{L^2}&\ds \leq \|\lnar u_0^\eps\|_{L^2}+\int_{-|t|}^{|t|} \left\|\lnar\left(V(|u^\eps|^2)u^\eps\right)\right\|_{L^2}(\tau)\,d\tau\\[3mm]
&\ds \leq C+ C\int_{-|t|}^{|t|}\left(\|\lnar u^\eps\|_{L^2}^2+\eps^2\,\|u^\eps\|_{\calB^1}^2\right)\|\lnar u^\eps\|_{L^2}d\tau,
\end{array}
$$
where we used Assumption \ref{Hinit} and \fref{pois1}. Using the estimate \fref{eee1} and a standard bootstrap lemma, this yields a local in time estimate: there exist $T$ and $C_T$ independent of $\eps$ such that 
\begin{equation}
  \label{eee2}
  \forall t\in [-T,T]\qquad \|\lnar u^\eps(t)\|_{L^2}\leq C_T.
\end{equation}
The proof of Item {\em (i)} of Proposition \ref{prop1} is complete.

\bs
\ni
{\em Step 2: the Cauchy problem for the intermediate model}.
Let us now consider the Cauchy problem for \fref{schrodinter}. By using \fref{pois2} with $s=1$, it is easy to prove by a fixed-point procedure that, for all $\eps>0$, \fref{schrodinter} admits a unique maximal solution $w^\eps$ such that $\lnar w^\eps\in \calC^0((-T^\eps,T^\eps),L^2)$.
Note that, by a bootstrap argument similar as above, one can prove that $T^\eps$ is bounded from below independently of $\eps$: there exists $T>0$ such that
$$\sup_{\eps\in(0,\eps_0]}\|\lnar w^\eps\|_{L^\infty([-T,T],L^2)}<+\infty.$$
Moreover, this solution $w^\eps$ also belongs to the energy space for all time and satisfies the mass and energy conservation laws:
\be
\label{consL2inter}
\|w^\eps(t)\|_{L^2}^2=\|u^\eps_0\|_{L^2}^2
\ee
\bee
\|\na w^\eps(t)\|_{L^2}^2+\|(\Vc^\eps)^{1/2}w^\eps(t)\|_{L^2}^2+\frac{1}{2}\int_{\RR^3} \GG(|w^\eps(t,x)|^2)|w^\eps(t,x)|^2dx\qquad\qquad &&\nonumber\\=\|\na u_0^\eps\|_{L^2}^2+\|(\Vc^\eps)^{1/2}u_0^\eps\|_{L^2}^2+\frac{1}{2}\int_{\RR^3} \GG(|u_0^\eps|^2)|u_0^\eps|^2dx.&&\label{consenergyinter}
\eee
Notice that this energy is finite and of order $\frac{1}{\eps^2}$ at the initial time. Indeed, the two first terms in the right-hand side are bounded by \fref{prempart} and the third term is bounded thanks to Assumption \ref{Hinit} and \fref{norme1}:
$$
\int_{\RR^3} \GG(|u_0^\eps|^2)|u_0^\eps|^2dx\leq \|\GG(|u_0^\eps|^2)\|_{L^\infty}\|u_0^\eps\|^2_{L^2}\leq C.
$$
Notice also that, to solve this problem  \fref{schrodinter}, it was crucial to bound the $L^\infty$ norm of $\GG(|u|^2)$, at the initial time and during the evolution, locally in time. Thanks to \fref{norme1}, such estimate is available as soon as $\lnar u$ belongs to $L^2$. This is the reason why we introduced Assumption \fref{init1} on the initial data. It is not clear whether the Cauchy problem for \fref{schrodinter} is well-posed on $H^1$ (or $\calB^1$) only.

\bs
\ni
{\em Step 3: regularization and approximation result.} 
Let us now prove \fref{estinter3}.  Let $\eps_0>0$ and $T>0$ be such that a uniform bound \fref{estinter4bis} holds. We set
$$M:=1+\sup_{\eps\in(0,\eps_0]}\left(\eps\|w^\eps\|_{L^\infty([-T,T],\calB^1)}+\|\lnar w^\eps\|_{L^\infty([-T,T],L^2)}\right)<+\infty.$$
Remark in particular that $\|\lnar u_0^\eps\|_{L^2}\leq M<2M$. Let $T_1^\eps$ be defined as follows:
$$
T_1^\eps=\sup\left\{\tau\in (0,T]:\,\|\lnar u^\eps\|_{L^\infty([-\tau,\tau],L^2)}<2M\right\}.
$$
From Step 1, we know that $T_1^\eps$ is bounded from below: there exists $T_1>0$ such that $T_1^\eps>T_1$ for all $\eps\in(0,\eps_0]$. Moreover, by a continuity argument, one can prove that if $T_1^\eps<T$, then
$$\|\lnar u^\eps\|_{L^\infty([-T_1^\eps,T_1^\eps],L^2)}=2M.$$

Let $\delta>0$. From Assumption \fref{init2} on the initial data, one can define a regularized subsequence $u_0^{\eps,\delta}$ of the initial data such that
\be
\label{regu1}
\sup_\eps \left( \|\lnar^2 u_0^{\eps,\delta}\|_{L^2}+\eps\|u_0^{\eps,\delta}\|_{\calB^1}\right)<+\infty
\ee
and
\be
\label{regu2}
\limsup_{\eps\to 0}\|\lnar(u_0^\eps-u_0^{\eps,\delta})\|\leq \delta.
\ee
Let $u^{\eps,\delta}$, $w^{\eps,\delta}$ be the solutions of \fref{schrod} and \fref{schrodinter} with $u_0^{\eps,\delta}$ as initial data. By standard arguments, using \fref{poisuv2} and \fref{poisuv1}, one can prove that these solutions depend continuously on the initial data: for $\delta$ small enough, \fref{regu1}, \fref{regu2} yield
\bea
\label{convdelta2}\limsup_{\eps\to 0}\|\lnar(w^{\eps,\delta}-w^\eps)\|_{L^\infty([-T,T],L^2)}&\leq &C\delta ,\\
\label{convdelta}\limsup_{\eps\to 0}\|\lnar(u^{\eps,\delta}-u^\eps)\|_{L^\infty([-T_1^\eps,T_1^\eps],L^2)}&\leq &C\delta,
\eea
where $C$ is independent of $\eps$ and where we used that $\eps \|u^{\eps,\delta}\|_{\calB^1}$ and $\eps \|u^{\eps}\|_{\calB^1}$ are uniformly bounded.
In particular, if $\eps_0$ and $\delta$ have been initially chosen small enough, we have the estimates
\be
\label{xx}
\sup_{\eps\in(0,\eps_0]}\|\lnar u^{\eps,\delta}\|_{L^\infty([-T_1^\eps,T_1^\eps],L^2)}+\sup_{\eps\in(0,\eps_0]}\|\lnar w^{\eps,\delta}\|_{L^\infty([-T,T],L^2)}\leq 4M.
\ee
Next, applying the operator $\lnar ^2$ to \fref{schrod} and \fref{schrodinter}, then using the tame estimates \fref{pois1} and \fref{pois2}, with $s=2$, one deduces thanks to \fref{xx} and the Gronwall lemma that, for all $\eps\in(0,\eps_0]$,
\be
\label{normeH2}
\|\lnar ^2 u^{\eps,\delta}\|_{L^\infty([-T_1^\eps,T_1^\eps],L^2)}+ \|\lnar ^2w^{\eps,\delta}\|_{L^\infty([-T,T],L^2)}\leq C_M\|\lnar^2u_0^{\eps,\delta}\|_{L^2}\leq C_{M,\delta}
\ee
where $C_M$ and $C_{M,\delta}$ are generic constants which only depend, respectively, on $M$ and on $(M,\delta)$, and where we used the bound \fref{regu1}.

Let us now write the equation satisfied by the difference $z=u^{\eps,\delta}-w^{\eps,\delta}$:
\bea
i\pa_tz=-\Delta z+\Vc^\eps z&+&V(|u^{\eps,\delta}|^2)u^{\eps,\delta}-\GG(|u^{\eps,\delta}|^2)u^{\eps,\delta}\nonumber\\
&+&\GG(|u^{\eps,\delta}|^2)u^{\eps,\delta}-\GG(|w^{\eps,\delta}|^2)w^{\eps,\delta}.\label{eqz}
\eea
From \fref{poisdiff} and \fref{normeH2} (and using also the energy estimates), we deduce that
$$\|\lnar(V(|u^{\eps,\delta}|^2)u^{\eps,\delta}-\GG(|u^{\eps,\delta}|^2)u^{\eps,\delta})\|_{L^2}\leq \eps^\gamma\,C_{M,\delta}.$$
Moreover, since $\lnar u^{\eps,\delta}$ and $\lnar w^{\eps,\delta}$ are uniformly bounded in $L^2$, one deduces from \fref{poisuv2} that, for all $t$ and $\eps$,
$$\|\lnar(\GG(|u^{\eps,\delta}|^2)u^{\eps,\delta}-\GG(|w^{\eps,\delta}|^2)w^{\eps,\delta})\|_{L^2}\leq C_M\|\lnar z(t)\|_{L^2}.$$
Finally, since $z(t=0)=0$, applying $\lnar$ to \fref{eqz} leads to
$$\|\lnar z(t)\|_{L^2}\leq \eps^\gamma \,C_{M,\delta}\,T+C_M\int_{-|t|}^{|t|} \|\lnar z(\tau)\|_{L^2}d\tau,$$
and a Gronwall lemma enables to conclude that
$$\|\lnar z\|_{L^\infty([-T_1^\eps,T_1^\eps],L^2)}\leq \eps^\gamma\,C_{M,\delta} \,T\,e^{C_M T}.$$
From this inequality and from \fref{convdelta2}, \fref{convdelta}, letting $\delta$ tend to zero, one deduces that
$$\lim_{\eps\to 0} \|\lnar (u^\eps-w^\eps)\|_{L^\infty([-T_1^\eps,T_1^\eps],L^2)}=0.$$
Finally, by fixing $\eps_1\in (0,\eps_0]$ small enough such that
$$\sup_{\eps\in(0,\eps_1]}\|\lnar (u^\eps-w^\eps)\|_{L^\infty([-T_1^\eps,T_1^\eps],L^2)}<\frac{M}{2},$$
we ensure that
$$\sup_{\eps\in(0,\eps_1]}\|\lnar w^\eps\|_{L^\infty([-T_1^\eps,T_1^\eps],L^2)}\leq \frac{3M}{2}<2M$$
(since $M>0$). We deduce from this inequality that, for all $\eps\in(0,\eps_1]$, $T_1^\eps= T$. The proof of the proposition is complete.
\end{proof}

\subsection{Proof of Theorem \ref{thm3}}

In order to prove Theorem \ref{thm3}, it remains to prove that the solution of the intermediate model \fref{schrodinter} can be approximated by the solution of the limit model \fref{schrodlim}. Notice that \fref{schrodinter} reads
$$i\pa_tw^\eps=H_r w^\eps-\frac{1}{r^2}\Delta_\sigma w^\eps+\GG\left(|w^\eps|^2\right)w^\eps,\qquad w^\eps(t=0)=u_0^\eps\,.$$
\begin{proposition}
\label{prop2}
Assume that the confinement potential and the initial data satisfy Assumptions \pref{H1} and \pref{Hinit}. Then the following holds true.\\
(i) For all $\eps>0$, the limit system \fref{schrodlim} admits a unique global solution $v^\eps$ such that $\lnar v^\eps\in \calC^0(\RR,L^2)$. Moreover, the following conservation laws are satisfied for all $t\in\RR$:
\bea
\label{conslimit1}
\|v^\eps(t)\|^2_{L^2}&=&\|u^\eps_0\|^2_{L^2},\\
\label{conslimit2}
\|\pa_rv^\eps(t)\|_{L^2}^2+\|(\Vc^\eps)^{1/2} v^\eps(t)\|_{L^2}^2&=&\|\pa_ru_0^\eps|_{L^2}^2+\|(\Vc^\eps)^{1/2} u_0^\eps\|_{L^2}^2,\\
\|\na_\sigma v^\eps\|_{L^2}^2+\frac{1}{2}\int_{\RR^3}\GG\left(\left|v^\eps\right|^2\right)\left|v^\eps\right|^2dx&=&\|\na_\sigma u_0^\eps\|_{L^2}^2+\frac{1}{2}\int_{\RR^3}\GG\left(\left|u_0^\eps\right|^2\right)\left|u_0^\eps\right|^2dx.\quad\qquad\label{conslimit3}
\eea
(ii) Let $T>0$. Then there exists $\eps_0>0$ such that the intermediate model \fref{schrodinter} admits a unique solution $w^\eps$ on $[-T,T]$ with a uniform bound \fref{estinter4bis}. Moreover, one has
\be
\label{convinter}
\lim_{\eps\to 0}\|\lnar (w^\eps-v^\eps)(t)\|_{L^\infty([-T,T],L^2)}=0.
\ee
\end{proposition}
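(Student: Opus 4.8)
\emph{Part (i).} I would obtain the local well-posedness of \fref{schrodlim} in $\{u:\lnar u\in L^2\}\cap\calB^1$ exactly as in Proposition \ref{prop1}(ii): the only difference with \fref{schrodinter} is that $-\frac1{r^2}\Delta_\sigma$ is replaced by $-\Delta_\sigma$, and both operators commute with $\lnar$ and with $\Vc^\eps$, both generate unitary groups on $L^2(\RR^3)$ preserving $\calB^1$, while the nonlinearity $\GG(|v^\eps|^2)v^\eps$ is controlled by \fref{pois2} (with $s=1$), by \fref{poisuv2} and by $\|\GG(|v^\eps|^2)\|_{L^\infty}\leq C\|\lnar v^\eps\|_{L^2}^2$ from \fref{norme1}. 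For the conservation laws the plan is to use the substitution $\omega^\eps=e^{itH_r}v^\eps$: since $e^{itH_r}$ acts on $r$ only and is unitary on $L^2(\RR_+,r^2dr)$, the integral $\int_0^\infty|v^\eps(r'\sigma)|^2r'^2dr'=\int_0^\infty|\omega^\eps(r'\sigma)|^2r'^2dr'$ is left unchanged, hence $\GG(|v^\eps|^2)=\GG(|\omega^\eps|^2)$ and $\omega^\eps$ solves \fref{schrodlim2}. Then \fref{conslimit1} and the energy law \fref{conslimit3} (using that $\na_\sigma$ commutes with $e^{itH_r}$) are the classical conservation laws for \fref{schrodlim2}, and \fref{conslimit2}, i.e. the conservation of $\langle H_rv^\eps,v^\eps\rangle=\|\pa_rv^\eps\|_{L^2}^2+\|(\Vc^\eps)^{1/2}v^\eps\|_{L^2}^2$, follows from the fact that $H_r$ commutes with $-\Delta_\sigma$ and with the multiplication operator $\GG(|\omega^\eps|^2)$ (a nonnegative real function of $\sigma$ alone): pairing \fref{schrodlim2} with $\overline{H_r\omega^\eps}$ and taking imaginary parts gives $\frac{d}{dt}\langle H_r\omega^\eps,\omega^\eps\rangle=0$, and $\langle H_rv^\eps,v^\eps\rangle=\langle H_r\omega^\eps,\omega^\eps\rangle$ by unitarity. (All these formal manipulations are justified by the usual regularization.) Finally, since $\GG\geq0$, \fref{conslimit3} yields $\|\na_\sigma v^\eps(t)\|_{L^2}\leq C$, hence with \fref{conslimit1} $\|\lnar v^\eps(t)\|_{L^2}\leq C$ uniformly in $t$ and $\eps$; as the local existence time depends only on $\|\lnar v^\eps\|_{L^2}$, the solution is global, and \fref{conslimit2} bounds $\eps\|v^\eps\|_{\calB^1}$.

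\emph{Part (ii).} Fix $T>0$ and set $M_0:=1+\sup_\eps\sup_{t\in\RR}(\|\lnar v^\eps(t)\|_{L^2}+\eps\|v^\eps(t)\|_{\calB^1})$, finite by (i). By Proposition \ref{prop1}(ii), $w^\eps$ exists on some fixed interval $[-T_0,T_0]$ with a uniform bound; I would set $T_1^\eps=\sup\{\tau\in(0,T]:\ \|\lnar w^\eps\|_{L^\infty([-\tau,\tau],L^2)}\leq2M_0\}$, which is bounded below by a fixed positive number, and on $[-T_1^\eps,T_1^\eps]$ the uniform bound \fref{estinter4bis} holds (the $\eps\|w^\eps\|_{\calB^1}$ part coming from \fref{consenergyinter}). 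The core goal is
$$\lim_{\eps\to0}\|\lnar(w^\eps-v^\eps)\|_{L^\infty([-T_1^\eps,T_1^\eps],L^2)}=0,$$
after which the continuity argument from the end of the proof of Proposition \ref{prop1} forces $T_1^\eps=T$ for $\eps$ small and yields \fref{convinter}. Writing $z^\eps=w^\eps-v^\eps$ and subtracting \fref{schrodlim} from \fref{schrodinter} gives
$$i\pa_tz^\eps=(H_r-\Delta_\sigma)z^\eps+\Big(1-\tfrac1{r^2}\Big)\Delta_\sigma w^\eps+\GG(|w^\eps|^2)w^\eps-\GG(|v^\eps|^2)v^\eps,\qquad z^\eps(0)=0.$$
Since $\lnar$ commutes with the unitary group of $H_r-\Delta_\sigma$, Duhamel's formula together with \fref{poisuv2} and the uniform bounds reduces the problem, via Gronwall, to estimating
$$\Big\|\lnar\Big[\big(1-\tfrac1{r^2}\big)\Delta_\sigma w^\eps\Big]\Big\|_{L^2}.$$

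This term involves two $\sigma$-derivatives of $w^\eps$ while only one is uniformly controlled, so, as in Step 3 of the proof of Proposition \ref{prop1}, I would regularize the data. Using both parts of Assumption \fref{init2}, for $\delta>0$ one constructs $u_0^{\eps,\delta}$ with $\limsup_{\eps\to0}\|\lnar(u_0^\eps-u_0^{\eps,\delta})\|_{L^2}\leq\delta$ and with $\sup_\eps\|\lnar^{s_0}u_0^{\eps,\delta}\|_{L^2}$, $\sup_\eps\|(1+\eps^2H_r)^{\ell/2}\lnar^{s_0}u_0^{\eps,\delta}\|_{L^2}$ and $\sup_\eps\eps\|u_0^{\eps,\delta}\|_{\calB^1}$ finite for suitable fixed integers $s_0\geq3$, $\ell\geq2$. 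Propagating these bounds by the higher-order analogues of the tame estimates \fref{pois2} and \fref{pois3} and a Gronwall argument, the corresponding solutions $w^{\eps,\delta}$, $v^{\eps,\delta}$ of \fref{schrodinter}, \fref{schrodlim} inherit, on $[-T,T]$, bounds by a constant $C_{M_0,\delta}$ independent of $\eps$; moreover continuous dependence on the data (via \fref{poisuv2}) gives $\limsup_{\eps\to0}(\|\lnar(w^{\eps,\delta}-w^\eps)\|+\|\lnar(v^{\eps,\delta}-v^\eps)\|)\leq C\delta$ on the relevant intervals. It then remains to show that for some $\gamma>0$,
$$\Big\|\lnar\Big[\big(1-\tfrac1{r^2}\big)\Delta_\sigma w^{\eps,\delta}\Big]\Big\|_{L^2}\leq\eps^\gamma\,C_{M_0,\delta}.$$
Since $\lnar$ commutes with multiplication by $1-r^{-2}$ and with $\Delta_\sigma$, and $\lnar\Delta_\sigma=\lnar-\lnar^3$, this reduces to bounding $\|(1-r^{-2})\lnar^3w^{\eps,\delta}\|_{L^2}$ (the lower-order term $\|(1-r^{-2})\lnar w^{\eps,\delta}\|_{L^2}$ being treated the same way). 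I would split the $r$-integral into $\{r\geq2\}$, $\{1/2\leq r<2\}$ and $\{r<1/2\}$. On $\{r\geq2\}$, $1-r^{-2}$ is bounded and $\Vc^\eps\geq C\eps^{-2-\alpha}$, so the contribution is at most $C\eps^{1+\alpha/2}\|(\Vc^\eps)^{1/2}\lnar^3w^{\eps,\delta}\|_{L^2}$, which by Lemma \ref{lemequivnorm} is $\leq\eps^\gamma C_{M_0,\delta}$. On $\{1/2\leq r<2\}$, $|1-r^{-2}|\leq C|r-1|$, and, exactly as in the last step of the proof of \fref{poisdiff}, one interpolates between $\||r-1|^{\alpha/2}\lnar^3w^{\eps,\delta}\|_{L^2}\leq C\eps^{1+\alpha/2}\|(\Vc^\eps)^{1/2}\lnar^3w^{\eps,\delta}\|_{L^2}$ (from \fref{superharm} and the definition of $\Vc^\eps$) and $\|\lnar^3w^{\eps,\delta}\|_{L^2}\leq C_{M_0,\delta}$; because $\alpha>2$ this leaves a positive power of $\eps$. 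On $\{r<1/2\}$, where $1-r^{-2}$ blows up only like $r^{-2}$, I would use that $\Delta_\sigma w^{\eps,\delta}$ vanishes at $r=0$ — by the extra regularity, $w^{\eps,\delta}$ is continuous near the origin, so $w^{\eps,\delta}(0,\cdot)$ is constant on $\SS^2$ and $\Delta_\sigma w^{\eps,\delta}(0,\cdot)=0$ — together with a one-dimensional Hardy inequality in $r$, the lower bound $\Vc^\eps\geq C\eps^{-2-\alpha}$ there, and interpolation between the weighted norm $L^2(r^2drd\sigma)$ and the flat norm $L^2(drd\sigma)$, which again (since $\alpha>2$) converts the confinement smallness $\|u\un_{r<1/2}\|_{L^2}\leq C\eps^{1+\alpha/2}\|u\|_{\calB^1}$ into a positive power of $\eps$.

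Once this estimate is in hand, inserting it into the Duhamel inequality, applying Gronwall, combining with the continuous-dependence bounds and sending $\delta\to0$ gives the required limit, the continuation argument closes, and \fref{convinter} follows; note $T>0$ was arbitrary. \emph{The hard part will be the last estimate, on $(1-r^{-2})\Delta_\sigma w^{\eps,\delta}$, and inside it the region $\{r<1/2\}$: the weight $r^{-2}$ is not controlled there by one $r$-derivative in dimension three (Rellich fails), so one must exploit simultaneously the vanishing of $\Delta_\sigma w^{\eps,\delta}$ at the origin, the strong confinement and the assumption $\alpha>2$ — this is exactly the place where $\alpha>2$ is needed in this proposition.}
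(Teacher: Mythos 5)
Part (i) of your proposal is correct and is essentially the paper's argument (the paper works with $H_r^{1/2}v^\eps$ instead of conjugating by $e^{itH_r}$, but the commutation facts used are the same). The genuine gap is in part (ii), and it sits exactly at the central estimate. You arrange the difference equation so that the singular-coefficient term $\bigl(1-\tfrac1{r^2}\bigr)\Delta_\sigma$ falls on the (regularized) \emph{intermediate} solution $w^{\eps,\delta}$, and every one of your three regional estimates ($r\geq 2$, $1/2\leq r<2$, $r<1/2$) then requires a uniform-in-$\eps$ mixed bound of the type $\eps\|(\Vc^\eps)^{1/2}\lnar^3 w^{\eps,\delta}\|_{L^2}\leq C_{M_0,\delta}$, i.e. $\|(1+\eps^2H_r)^{\ell/2}\lnar^{s}w^{\eps,\delta}\|_{L^2}\leq C_{M_0,\delta}$ via Lemma \ref{lemequivnorm}. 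You assert these follow "by the higher-order analogues of \fref{pois2} and \fref{pois3} and Gronwall", but this propagation is only available for the \emph{limit} equation \fref{schrodlim}: the operator $(1+\eps^2H_r)^{\ell/2}\lnar^{s}$ commutes with $H_r-\Delta_\sigma$, but not with the Hamiltonian $-\Delta+\Vc^\eps=H_r-\tfrac{1}{r^2}\Delta_\sigma$ of \fref{schrodinter}, because $H_r$ does not commute with multiplication by $r^{-2}$; the commutator $[H_r,r^{-2}]\Delta_\sigma$ produces terms with extra $\sigma$-derivatives and weights singular at $r=0$, so the Gronwall loop does not close. The paper flags precisely this non-commutation as the crucial point, and only pure $\lnar^s$-bounds (which do commute with $\Delta$) are ever propagated for $u^{\eps,\delta}$ and $w^{\eps,\delta}$.

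This is why the paper's proof takes the opposite arrangement: it propagates the high norms $\|\lnar^6 v^{\eps,\delta}\|_{L^2}$ and $\|(1+\eps^2H_r)^{3/2}\lnar v^{\eps,\delta}\|_{L^2}$ for the regularized \emph{limit} solution only, introduces a cutoff $\chi(r)$ vanishing for $r\leq 1/3$, and estimates $y=\chi v^{\eps,\delta}-w^\eps$, so that the problematic term is $\chi(r)\tfrac{r^2-1}{r^2}\Delta_\sigma\lnar v^{\eps,\delta}$, bounded by the interpolation $\|(r-1)v^{\eps,\delta}\|_{L^2}^{1/2}\|\lnar^6 v^{\eps,\delta}\|_{L^2}^{1/2}\leq \eps^{1/2}C_{M_0,\delta}$, while the cutoff commutator $R^\eps$ and the discarded piece $(1-\chi)\lnar v^{\eps,\delta}$ are controlled by the confinement ($\leq \eps^{\alpha-1}C_{M_0,\delta}$ and $\leq\eps^{\alpha}C_{M_0,\delta}$). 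In particular no estimate near $r=0$ for $w^{\eps,\delta}$ is ever needed: the delicate $\{r<1/2\}$ analysis you single out as "the hard part" (vanishing of $\Delta_\sigma w^{\eps,\delta}$ at the origin, Hardy, Rellich) is an artifact of placing the singular term on $w$ rather than on the cutoff limit solution. Unless you supply a genuinely new argument giving uniform weighted-angular bounds for $w^{\eps,\delta}$ (commutator estimates for $[H_r,r^{-2}]\Delta_\sigma$, or a $\calB^1$-level Gronwall with tame estimates for $\nabla\bigl(\GG(|w|^2)w\bigr)$ that are not in the paper), your regional estimates are unsupported and the proof as proposed does not close.
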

\begin{proof}
Thanks to \fref{pois2}, \fref{poisuv2} and \fref{pois3} (with $s=1$) it is easy to prove that the Cauchy problem \fref{schrodlim} is well-posed locally in time. In fact, the solution will be global thanks to \fref{conslimit1}, \fref{conslimit2} and \fref{conslimit3}. Let us now prove these conservation laws. The first one \fref{conslimit1} is the standard conservation of the $L^2$ norm. The second one \fref{conslimit2} is the conservation of the $L^2$ norm for the equation satisfied by $H_r^{1/2}v^\eps$:
$$ i\pa_tH_r^{1/2}v^\eps=H_rH_r^{1/2}v^\eps-\Delta_\sigma H_r^{1/2}v^\eps+\GG\left(|v^\eps|^2\right)H_r^{1/2}v^\eps,\quad H_r^{1/2}v^\eps(t=0)=H_r^{1/2}u_0^\eps\,,
$$
recalling that $\GG(\cdot)$ is independent of $r$ and that
$$H_r=-\frac{1}{r^2}\pa_r(r^2\pa_r)+\Vc^\eps\,,\qquad \|(H_r)^{1/2}u\|_{L^2}^2=\|\pa_ru\|_{L^2}^2+\|(\Vc^\eps)^{1/2} v^\eps\|_{L^2}^2.$$
The third identity \fref{conslimit3} is obtained by multiplying \fref{schrodlim} by $\pa_t\overline{v^\eps}$, integrating on $\RR^3$, taking the real part of the equation and finally using \fref{conslimit2}.

Let us now prove Item {\em (ii)}. Let $T>0$ and denote
$$M_0=1+\sup_{\eps\in (0,1]}\left(\|u_0^\eps\|_{L^2}^2+\|\na_\sigma u_0^\eps\|_{L^2}^2+\frac{1}{2}\int_{\RR^3}\GG\left(\left|u_0^\eps\right|^2\right)\left|u_0^\eps\right|^2dx\right)<+\infty.$$
By \fref{conslimit1} and \fref{conslimit3}, we have, for all $\eps>0$,
\be
\label{ppqM0}
\|\lnar v^\eps\|_{L^\infty([-T,T],L^2)}\leq M_0.
\ee
By Proposition \ref{prop1}, the Cauchy problem for the intermediate model \fref{schrodinter} is locally well-posed for $0<\eps<1$, with a uniform bound of the form \fref{estinter4}. Denote by $w^\eps$ its solution and set
\be
\label{defTeps}
T_0^\eps=\sup\left\{\tau\in (0,T]:\,\|\lnar w^\eps\|_{L^\infty([-\tau,\tau],L^2)}<2M_0\right\}.
\ee
We already know that $T_0^\eps$ is bounded from below: there exists $T_0>0$ such that $T_0^\eps>T_0$ for all $\eps\in(0,1]$. Moreover, by a continuity argument, one can prove that if $T_0^\eps<T$, then
$$\|\lnar w^\eps\|_{L^\infty([-T_0^\eps,T_0^\eps],L^2)}=2M_0.$$

Let us now regularize the initial data as follows. From \fref{init2} again, we deduce that, for all $\delta>0$, there exists a subsequence of the initial data $u_0^\eps$, still denoted $u^{\eps,\delta}_0$, such that
\be
\label{regu3}
\sup_\eps \left( \|\lnar^6 u_0^{\eps,\delta}\|_{L^2}+\|(1+\eps H_r)^{3/2}\lnar u_0^{\eps,\delta}\|_{L^2}\right)<+\infty
\ee
and
\be
\label{regu4}
\limsup_{\eps\to 0}\|\lnar(u_0^\eps-u_0^{\eps,\delta})\|\leq \delta.
\ee
We denote by $v^{\eps,\delta}$ the corresponding solution of \fref{schrodlim}. By standard arguments, using \fref{regu3}, \fref{regu4} and \fref{poisuv2}, one can prove that
\be
\label{convdeltabis2}\limsup_{\eps\to 0}\|\lnar(v^{\eps,\delta}-v^\eps)\|_{L^\infty([-T,T],L^2)}\leq C\delta.
\ee
In particular, for $\delta$ and $\eps_0$ small enough we have the estimate
\be
\label{xxbis}
\sup_{\eps\in(0,\eps_0]}\|\lnar v^{\eps,\delta}\|_{L^\infty([-T,T],L^2)}\leq 4M_0.
\ee
Next, applying the operator $\lnar ^6$ to \fref{schrodlim}, then using the tame estimate \fref{pois2} with $s=6$,
\bee
\|\lnar ^6v^{\eps,\delta}(t)\|_{L^2}&\leq& C\|\lnar^6u_0^{\eps,\delta}\|_{L^2}+C\int_{-|t|}^{|t|} \|\lnar v^{\eps,\delta}(\tau)\|_{L^2}^2\|\lnar ^6v^{\eps,\delta}(\tau)\|d\tau\\
&\leq&C_{M_0,\delta}+C_{M_0}\int_{-|t|}^{|t|} \|\lnar ^6v^{\eps,\delta}(\tau)\|d\tau.
\eee
Hence the Gronwall lemma yields
\be
\|\lnar ^6v^{\eps,\delta}\|_{L^\infty([-T,T],L^2)}\leq C_{M_0,\delta}.\label{normeH2bis}
\ee
Similarly, applying the operator $(1+\eps H_r)^{3/2}\lnar$ to \fref{schrodlim}, then using the estimate \fref{pois3}, with $s=4$ leads to
\bee
\|(1+\eps^2H_r)^{3/2}\lnar v^{\eps,\delta}(t)\|_{L^2}&\leq& C\|(1+\eps^2H_r)^{3/2}\lnar u_0^{\eps,\delta}\|_{L^2}\\
&&\hspace*{-1cm}+C\int_{-|t|}^{|t|} \|\lnar v^{\eps,\delta}(\tau)\|_{L^2}^2\|(1+\eps^2H_r)^{3/2}\lnar v^{\eps,\delta}(\tau)\|d\tau\\
&\leq&C_{M_0,\delta}+C_{M_0,\delta}\int_{-|t|}^{|t|} \|(1+\eps^2H_r)^{3/2}\lnar v^{\eps,\delta}(\tau)\|d\tau.
\eee
The crucial point for this estimate was that the operators $(1+\eps H_r)^{3/2}\lnar$ and $H_r-\Delta_\sigma$ commute together (whereas $(1+\eps H_r)^{3/2}\lnar$ does not commute with the complete Laplace operator $\Delta$ that appears in the intermediate model \fref{schrodinter}). Hence the Gronwall lemma yields
\be
\|(1+\eps^2H_r)^{3/2}\lnar v^{\eps,\delta}\|_{L^\infty([-T,T],L^2)}\leq  C_{M_0,\delta}.\label{normeH2ter}
\ee

With these estimates, we are now ready to conclude. Let us introduce a smooth function $\chi$, defined on $\RR_+$, such that $0\leq \chi\leq 1$, $\chi(r)=0$ for $r\leq 1/3$ and $\chi(r)=1$ for $r\geq 2/3$. Since the support of $1-\chi$ is $\{r\leq 2/3\}$, one has
\bea
\|(1-\chi)\lnar v^{\eps,\delta}\|_{L^2}&\leq& C \||r-1|^\alpha\lnar v^{\eps,\delta}\|_{L^2}\nonumber\\
&\leq&C\eps^\alpha \|\eps^2\Vc^\eps \lnar v^{\eps,\delta}\|_{L^2}\nonumber\\
&\leq&C\eps^\alpha \|(1+\eps^2H_r) \lnar v^{\eps,\delta}\|_{L^2}\leq \eps^\alpha C_{M_0,\delta}
\label{estimdiffchi}\eea
where we used Assumption \fref{superharm}, Lemma \ref{lemequivnorm} and \fref{normeH2ter}. Moreover, the function $\chi v^{\eps,\delta}$ satisfies the equation
$$i\pa_t(\chi v^{\eps,\delta})=H_r(\chi v^{\eps,\delta})-\Delta_\sigma (\chi v^{\eps,\delta})+\GG\left(|v^{\eps,\delta}|^2\right)(\chi v^{\eps,\delta})+R^\eps$$
where the remainder
$$R^\eps=-2\chi'\pa_rv^{\eps,\delta}-\left(\chi''+\frac{2}{r}\chi'\right)v^{\eps,\delta}$$
can be estimated as follows:
\bea
\|\lnar R^\eps\|_{L^2}&\leq& C \left\||r-1|^\alpha\lnar \frac{1}{r}\pa_r (rv^{\eps,\delta})\right\|_{L^2}+C \||r-1|^\alpha\lnar v^{\eps,\delta}\|_{L^2}\nonumber\\
&\leq &C \eps^{\alpha-1}\left\|\eps^3\Vc^\eps\lnar \frac{1}{r}\pa_r (rv^{\eps,\delta})\right\|_{L^2}+C \eps^\alpha\|\eps^2\Vc^\eps\lnar v^{\eps,\delta}\|_{L^2}\qquad \nonumber\\
&\leq& C\eps^{\alpha-1} \|(1+\eps^2H_r)^{3/2} \lnar v^{\eps,\delta}\|_{L^2}\\
&\leq& \eps^{\alpha-1}C_{M_0,\delta}
\label{estimremainder}
\eea
where we used again Assumption \fref{superharm}, Lemma \ref{lemequivnorm} and \fref{normeH2ter} and the fact that the supports of the functions $\chi'$ and $\chi''$ are $\{1/3\leq r\leq 2/3\}$.

Let us now estimate the difference $y=\chi v^{\eps,\delta}-w^\eps$. This function satisfies the equation
\bea
i\pa_t\lnar y&=&H_r\lnar y-\frac{1}{r^2}\Delta_\sigma \lnar y-\chi(r)\frac{r^2-1}{r^2}\Delta_\sigma \lnar v^{\eps,\delta}\nonumber\\
&& +\lnar\left(\GG(|v^{\eps,\delta}|^2)\chi v^{\eps,\delta}-\GG(|w^{\eps,\delta}|^2)w^\eps\right)+\lnar R^\eps\label{eqy}
\eea
with $y(0)=u_0^{\eps,\delta}-u_0^\eps$. By \fref{poisuv2}, \fref{defTeps}, \fref{xxbis} and \fref{estimdiffchi}, for $t\leq T_0^\eps$ we have
$$\left\|\lnar\left(\GG(|v^{\eps,\delta}|^2)\chi v^{\eps,\delta}-\GG(|w^\eps|^2)w^{\eps,\delta}\right)(t)\right\|_{L^2}\leq C_{M_0}\|\lnar y(t)\|_{L^2}+\eps^\alpha C_{M_0,\delta}.$$
Moreover, by interpolating and using that $\chi(r)$ vanishes near 0, one gets
\bee
\left\|\chi(r)\frac{r^2-1}{r^2}\Delta_\sigma \lnar v^{\eps,\delta}\right\|_{L^2}&\leq&C\left\|(r-1) v^{\eps,\delta}\right\|_{L^2}^{1/2}\left\|\lnar^6 v^{\eps,\delta}\right\|_{L^2}^{1/2}\\
&\leq &\eps^{1/2} C_{M_0,\delta}\left\|\Vc\left(\frac{r-1}{\eps}\right)^{1/\alpha} v^{\eps,\delta}\right\|_{L^2}^{1/2}\\
&\leq&\eps^{1/2}C_{M_0,\delta} \left\|\Vc\left(\frac{r-1}{\eps}\right)^{1/2} v^{\eps,\delta}\right\|_{L^2}^{1/\alpha}\|v^{\eps,\delta}\|_{L^2}^{1/2-1/\alpha}\\
&\leq&\eps^{1/2} C_{M_0,\delta}\left(\|(1+\eps^2 H_r)^{1/2}v^{\eps,\delta}\|_{L^2}\right)^{1/\alpha}\|v^{\eps,\delta}\|_{L^2}^{1/2-1/\alpha}\\
&\leq& \eps^{1/2}C_{M_0,\delta}.
\eee
In this series of inequalities, we used \fref{normeH2bis}, Assumption \fref{superharm}, a H\"older inequality (note that $\alpha>2$) and, finally, the conservation laws \fref{conslimit1} and \fref{conslimit2} for the regularized function $v^{\eps,\delta}$: 
$$\|(1+\eps^2 H_r)^{1/2}v^{\eps,\delta}\|_{L^2}^2=\|(1+\eps^2 H_r)^{1/2}u_0^{\eps,\delta}\|_{L^2}^2\leq C.$$

Finally, the $L^2$ estimate for \fref{eqy} yields
\bee
\|\lnar y(t)\|_{L^2}&\leq&\|\lnar(u_0^{\eps,\delta}-u_0^\eps)\|_{L^2} +\eps^{1/2}C_{M_0,\delta}+\eps^{\alpha-1}C_{M_0,\delta}\\
&&+C_{M_0}\int_{-|t|}^{|t|}\|\lnar y(\tau)\|_{L^2}d\tau.
\eee
We conclude by using the Gronwall lemma. We obtain
$$\limsup_{\eps\to 0}\|\lnar y\|_{L^\infty([-T_0^\eps,T_0^\eps],L^2)}\leq C\delta.$$
Hence, using \fref{convdeltabis2}, \fref{estimdiffchi} and letting $\delta$ tend to zero yields
$$\lim_{\eps\to 0}\|\lnar (v^\eps-w^\eps)\|_{L^\infty([-T_0^\eps,T_0^\eps],L^2)}=0.$$
In particular, from \fref{ppqM0}, we deduce that, for $\eps$ small enough, we have
$$\|\lnar w^\eps\|_{L^\infty([-T_0^\eps,T_0^\eps],L^2)}\leq \frac{2M_0}{3},$$
which implies that $T_0^\eps=T$. The proof of Proposition \ref{prop2} is complete.
\end{proof}

\bs
\ni
{\em Proof of Theorem \ref{thm3}}.

\ms
\ni
Theorem \ref{thm3} is a direct consequence of  Proposition \ref{prop2}, combined with Proposition \ref{prop1}. Indeed, Prop. \ref{prop2}, {\em (i)}, states that the limit system \fref{schrodlim} is globally well-posed. Let $T>0$. Prop. \ref{prop2}, {\em (ii)}, says that the intermediate system \fref{schrodinter} is well-posed on $[-T,T]$ (for $\eps$ small enough), is uniformly bounded and converges to \fref{schrodlim} as $\eps\to 0$. Therefore, Prop. \ref{prop1} {\em (iii)} can be applied thanks to this uniform bound: \fref{schrod} is asymptotically close to \fref{schrodinter}, thus also converges to \fref{schrodlim}.
\qed

\begin{appendix}

\section{Proof of Lemma \ref{lemequivnorm}}
\label{appA}

In this appendix we identify the norm on the domains of  iterates of the operator $H_r$ used in Section \ref{sect3}.
The lemma is a consequence of the following result.
\begin{proposition}
\label{propapp} Let $W\in C^\infty (\RR _+)$ be a real valued potential such that $W(x)\ge 1$ for every $x\in \RR $
and satisfying
\begin{equation}\label{estderiv}
\forall k\in \NN , \exists C_k>0 : \forall x\in \RR _+, \vert W^{(k)}(x)\vert \leq C_kW(x)\ .
\end{equation}
Consider the following unbounded operator on $L^2(\RR _+)$,
$$D(A)=\{ u\in H^2(\RR _+): u(0)=0 , Wu\in L^2(\RR  _+)\} \ \ ;\ \ Au:=-u''+Wu\ .$$
Then $A$ is a positive selfadjoint operator and, for every integer $s\ge 1$, 
\bee
D(A^{s/2})&=&\left\{ u\in L^2(\RR _+): \,\, W^{\frac{s-k}2}u^{(k)}\in L^2(\RR _+),\,\,0\le k\le s\right.\\
&&\qquad\left.    \mbox{and }\left (-\frac{d^2}{dx^2}+W\right )^pu(0)=0, \,0\le p\le \left[\frac {s-1}2\right]\ \right\}
\eee
and, on this space, the norm $\Vert A^{s/2}u\Vert _{L^2}$ is equivalent to
$$\sum _{k=0}^s \Vert W^{\frac{s-k}2}u^{(k)} \Vert _{L^2}\  ,$$
with constants only depending on the constants $C_k$ in (\ref{estderiv}), for $k$ in a finite set.
\end{proposition}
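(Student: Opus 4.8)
The plan is to reduce the statement to two one‑sided estimates for the quantity $N_s(u):=\sum_{k=0}^s\|W^{(s-k)/2}u^{(k)}\|_{L^2(\RR_+)}$ and to argue by induction on $s$. First, $A$ is the operator associated with the closed form $q(u)=\int_0^{+\infty}(|u'|^2+W|u|^2)\,dx$ on $\{u\in H^1(\RR_+):u(0)=0,\ W^{1/2}u\in L^2\}$, which satisfies $q(u)\ge\|u\|_{L^2}^2$; hence $A$ is self‑adjoint with $A\ge 1$, and in particular $\|u\|_{L^2}\le\|A^{s/2}u\|_{L^2}$ for all $s\ge 0$. It then suffices to prove: (a) if $u$ belongs to the set described in the right‑hand side, then $u\in D(A^{s/2})$ and $\|A^{s/2}u\|_{L^2}\lesssim N_s(u)$; and (b) if $u\in D(A^{s/2})$ then $N_s(u)\lesssim\|A^{s/2}u\|_{L^2}$. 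Since $A\ge 1$, the lower‑order terms absorbed below are harmless, and the two norms will then be equivalent with constants depending only on finitely many of the $C_k$ in \fref{estderiv}. The cases $s=0,1$ are immediate, the case $s=1$ being precisely the description of the form domain.

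For (a) I would first record, by induction on $m$ from $A^m=A\circ A^{m-1}$, the Leibniz rule and \fref{estderiv}, that $A^m$ is the differential operator $A^mu=\sum_{\beta=0}^{2m}c_\beta\,u^{(\beta)}$ with smooth coefficients satisfying $|c_\beta(x)|\le C\,W(x)^{(2m-\beta)/2}$ and $c_{2m}=(-1)^m$ (each derivative landing on a coefficient costs at most a factor $\lesssim W$). For even $s=2m$ this gives at once $\|A^mu\|_{L^2}\le\sum_\beta\|c_\beta u^{(\beta)}\|_{L^2}\lesssim N_{2m}(u)$, and the conditions $A^pu(0)=0$ for $p\le m-1$ are exactly what is needed to put $u$ in $D((A^m)^*)=D(A^m)$, via the telescoped integration‑by‑parts identity
$$\langle A^mu,v\rangle-\langle u,A^mv\rangle=\sum_{j=0}^{m-1}\Big((A^{m-1-j}u)'(0)\,\overline{(A^jv)(0)}-(A^{m-1-j}u)(0)\,\overline{(A^jv)'(0)}\Big),$$
tested against $v$ in a core (finite linear combinations of eigenfunctions when the resolvent is compact — which is the case in the application — and otherwise smooth compactly supported functions modified to kill the finitely many relevant traces). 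For odd $s=2m+1$, I write $A^{(2m+1)/2}=A^{1/2}A^m$; the additional condition $A^mu(0)=0$ together with the coefficient bounds show that $A^mu$ lies in the form domain and that $\|(A^mu)'\|_{L^2}$ and $\|W^{1/2}A^mu\|_{L^2}$ are bounded term by term by $N_{2m+1}(u)$, so that $u\in D(A^{m+1/2})$ and $\|A^{(2m+1)/2}u\|_{L^2}^2=q(A^mu)=\|(A^mu)'\|_{L^2}^2+\|W^{1/2}A^mu\|_{L^2}^2\lesssim N_s(u)^2$.

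For (b), the harder direction, I would start from elliptic regularity: writing $u''=Wu-Au$ and bootstrapping from $u\in D(A^{s/2})$ gives $u\in H^s_{\rm loc}(\RR_+)$ and makes the traces $A^pu(0)$, $0\le p\le[(s-1)/2]$, meaningful and equal to $0$. The estimate $N_s(u)\lesssim\|A^{s/2}u\|_{L^2}$ is then obtained in three steps: (i) extract the two extreme terms $\|u^{(s)}\|_{L^2}$ and $\|W^{s/2}u\|_{L^2}$ from $\|A^{s/2}u\|_{L^2}$, using the formula for $A^mu$ (even $s$), resp. the identity $\|A^{(2m+1)/2}u\|_{L^2}^2=q(A^mu)$ (odd $s$), together with the triangle inequality, leaving only lower‑derivative, lower‑weight remainders; (ii) control each intermediate term $a_k:=\|W^{(s-k)/2}u^{(k)}\|_{L^2}$, $1\le k\le s-1$, through the ladder inequality
$$a_k^2\ \lesssim\ a_{k-1}\,a_{k+1}+a_{k-1}^2+\big|\,\text{boundary contribution at }0\,\big|,$$
obtained by integrating $\int_0^{+\infty}W^{s-k}|u^{(k)}|^2$ by parts once and invoking \fref{estderiv}; and (iii) run a perturbed‑convexity argument on this recursion, together with Young's inequality, to bound all the $a_k$ by $\|A^{s/2}u\|_{L^2}+\|u\|_{L^2}$, and finally use $\|u\|_{L^2}\le\|A^{s/2}u\|_{L^2}$.

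The main obstacle is the bookkeeping of the boundary contributions at $x=0$ produced by the integrations by parts in steps (i)–(ii): one has to arrange that the dangerous terms cancel by using precisely the conditions $A^pu(0)=0$, and to estimate what remains by trace and interpolation inequalities on $(0,1)$ while keeping the constants dependent only on the $C_k$; this is possible because $|W'|\le C_1W$ forces $W(x)\simeq W(0)$ on $[0,1]$, so that a trace factor carrying a power of $W(0)$ is comparable to the corresponding weighted $L^2$ norm over $(0,1)$. A secondary technical point is to justify all these integrations by parts and limiting procedures for an arbitrary $u\in D(A^{s/2})$: I would prove the inequalities first on the core used in (a) and then pass to the limit, using that this core is dense in $D(A^{s/2})$ for the graph norm $\|u\|_{L^2}+\|A^{s/2}u\|_{L^2}$. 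Once (a) and (b) are established, the equality of the two sets and the equivalence of the norms follow by combining them with $A\ge 1$.
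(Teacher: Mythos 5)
Your direction (a) is essentially workable: the coefficient bounds $|c_\beta|\le C\,W^{(2m-\beta)/2}$ do hold (every word in $(-\partial^2+W)^m$ containing $j$ factors of $W$ has order at most $2(m-j)$ and coefficient $O(W^j)$, and $W\ge 1$), and the duality argument via the telescoped Green identity is a reasonable way to get membership in $D(A^m)$ — though the existence of a suitable core in the general case (no compact resolvent is assumed in the proposition) is asserted rather than proved. The genuine gap is in direction (b), and it is not the boundary bookkeeping you single out as the main obstacle: it is the coercivity mechanism itself. Your step (i), ``extract the extreme terms by the triangle inequality, leaving lower-derivative, lower-weight remainders,'' is already incorrect for $s=2$: from $Au=-u''+Wu$ the triangle inequality only gives $\|u''\|\le\|Au\|+\|Wu\|$ and $\|Wu\|\le\|Au\|+\|u''\|$, where the remainder is of top strength; and for $s=2m\ge 4$ the expansion of $A^m$ contains the undifferentiated cross terms $\binom{m}{j}(-1)^{m-j}W^{j}u^{(2m-2j)}$, which carry exactly the weight $W^{(s-\beta)/2}$ you are trying to control, not lower-order errors. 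Moreover your ladder inequalities $a_k^2\lesssim a_{k-1}a_{k+1}+a_{k-1}a_k+\mathrm{bdry}$ are homogeneous in the $a_k$'s: the whole system (i)--(iii) is satisfied with all $a_k$ equal and arbitrarily large while $\|A^{s/2}u\|$ is small, so no perturbed-convexity argument run on these inequalities alone can produce $N_s(u)\lesssim\|A^{s/2}u\|$. The missing ingredient is the sign of the cross term: one must expand $\|Au\|^2$ and integrate by parts to see that $-2\,\mathrm{Re}\,(u''\mid Wu)=2\|\sqrt W\,u'\|^2+2\,\mathrm{Re}\,(W'u\mid u')$ is nonnegative up to terms absorbable thanks to $|W'|\le C_1W$; justifying this for a general element of the domain is itself delicate (the paper uses a cutoff and Fatou's lemma). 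Note also that identifying the operator of the statement, with domain $\{u\in H^2:\ u(0)=0,\ Wu\in L^2\}$, with your form operator is equivalent to this $s=2$ separation estimate, so it cannot be taken for granted at the outset.

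By contrast, the paper needs this sign argument only once, at order $s=2$ (where it also yields selfadjointness, via closed range plus triviality of $\ker A^*$ proved by a convexity argument on $|\psi|^2$). The higher orders are then handled not by estimating $\|A^{s/2}u\|$ directly, but by the exact reduction $D(A^{s/2})=\{u\in D(A):\,Au\in D(A^{(s-2)/2})\}$ and induction: expanding $(Au)^{(k)}$ with \fref{estderiv}, the only nontrivial point is the top term, and there the paper observes that the $k=0$ condition says $-v''+Wv\in L^2$ for $v=W^{(s-2)/2}u$, so that $v\in D(A^*)=D(A)$ and hence $W^{s/2}u\in L^2$ — which is precisely the multi-term separation your scheme would otherwise have to prove by expanding $\|A^{s/2}u\|^2$ and checking the sign of every cross term. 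If you want to rescue your route, that full expansion (or the paper's inductive trick) has to replace steps (i) and (iii); as written, the proposal does not contain the idea that makes the lower bound true.
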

\begin{proof} For simplicity, we denote by $\Vert f\Vert $ the norm of $f$ in $L^2(\RR _+)$, and by $(f\vert g)$ the corresponding inner product.
We shall proceed in several steps.
\vskip0.25cm \noindent
{\sl Step 1.} The case $s=2$ and selfadjointness. In view of the definition of $A$, the symmetry identity 
$$(Au_1\vert u_2)=(u_1\vert Au_2)\ ,\ u_1,u_2\in D(A)$$
is merely an integration by parts. We now pass to a priori estimates. Firstly, integration by parts also implies
\begin{equation}\label{Auu}
(Au\vert u)=\Vert u'\Vert ^2 +\Vert \sqrt W u\Vert ^2 \ ,\ u\in D(A)\ .
\end{equation}
In view of the assumption on $W$, this implies in particular 
\begin{equation}\label{A1}
\Vert Au\Vert \ge \Vert u\Vert \ , \ u\in D(A)\ .
\end{equation}
Next we derive a more precise estimate on $\Vert Au\Vert $ by computing
$$\Vert Au\Vert ^2=\Vert u''\Vert ^2+\Vert Wu\Vert ^2-2{\rm Re}(u''\vert Wu)\ .$$
Introducing $\chi \in C^\infty _0(\RR )\ ,\ \chi \ge 0\ ,\ \chi =1 $ near $0$, we get
$$-(u''\vert Wu)=-\lim _{R\rightarrow\infty}\int _{\RR _+}\chi\left (\frac xR\right )u''(x)W(x)\overline {u(x)}\, dx$$
and,  after an integration by parts,
$$-(u''\vert Wu)=\lim _{R\rightarrow\infty}\int _{\RR _+} \left (\frac 1R\chi \, '\left (\frac xR\right )W(x)+\chi \left (\frac xR\right )W'(x)\right )u'(x)\overline {u(x)}+\chi \left (\frac xR\right )W(x)\vert u'(x)\vert ^2\, dx$$
Since $u'\in L^2$ and $Wu\in L^2$, the first term in the right hand side tends to $0$ as $R$ tends to infinity. Since moreover $W'=O(W)$, the second term has a limit. Consequently, the third term also has
a limit. By Fatou's lemma, we conclude that $\sqrt Wu'\in L^2$, and finally
\begin{equation}\label{normAu}
\Vert Au\Vert ^2=\Vert u''\Vert ^2+\Vert Wu\Vert ^2+2\Vert \sqrt Wu'\Vert ^2+2{\rm Re}(u'\vert W'u)\ .
\end{equation}
Because of the Cauchy--Schwarz inequality and of (\ref{estderiv}), 
\begin{eqnarray*}
2{\rm Re}(u'\vert W'u)\ge -2C_1\Vert u'\Vert \, \Vert Wu\Vert &\ge &-2C_1^2\vert u'\Vert ^2-\frac 12\Vert Wu\Vert ^2\\
&\ge & -\frac 12(\Vert u''\Vert ^2+\Vert Wu\Vert ^2)-C'_1\Vert u\Vert ^2 \ ,
\end{eqnarray*}
where $C_1'$ only depends on $C_1$. Combining this inequality with (\ref{A1}), we infer
\begin{equation}\label{normAu2}
\Vert Au\Vert \ge c_1(\Vert u''\Vert +\Vert Wu\Vert +\Vert \sqrt Wu'\Vert )\ ,
\end{equation}
where $c_1>0$ only depends on $C_1$. We therefore have proved the statement for $s=2$.
Let us use this inequality for proving that $A$ is selfadjoint. Recall that 
$$D(A^*)=\{ \psi \in L^2(\RR _+): \exists C>0, \forall u\in D(A), \vert (Au\vert \psi )\vert \le C\Vert u\Vert \} \ .$$
The symmetry of $A$ already implies that $D(A)\subset D(A^*)$ and that $A^*u=Au$ for every $u\in D(A)$.
Therefore we just have to prove that $D(A^*)\subset D(A)$. We claim that it is enough to prove that 
$\ker A^*=\{ 0\} $. Indeed, from estimate (\ref{normAu2}), it is easy to prove that the range of $A$ is a closed subspace
of $L^2$. Since its orthogonal is $\ker A^*$,  the cancellation of $\ker A^*$ would imply that $A$ is onto.
Consequently, for every $\psi \in D(A^*)$, there would exist $u\in D(A)$ such that $A^*\psi =Au$, namely $\psi -u \in \ker A^*$,
hence $\psi =u\in D(A)$.
\vskip 0.25cm \noindent

We now prove that  $\ker A^*=\{ 0\} $.  First of all, we observe that, for every $\psi \in D(A^*)$, in the distributional sense $\psi '' -W\psi \in L^2(\RR _+)$, hence
$\psi \in H^2((0,R))$ for every $R>0$. Moreover, by integration by parts,  for every $u\in D(A)$ supported into $[0,R]$ for some $R>0$,
we have
$$(Au\vert \psi )=u'(0)\overline {\psi (0)}+(u\vert -\psi ''+W\psi )= u'(0)\overline {\psi (0)}+O(\Vert u\Vert )$$
and testing the information $(Au\vert \psi )=O(\Vert u\Vert )$  on $u(x)=x\chi (nx)$ for large $n$ imposes $\psi (0)=0$. Assume moreover that $\psi \in \ker A^*$, namely that
$$\psi ''-W\psi =0\ .$$
By the Sobolev embedding, we infer that $\psi \in C^\infty (\RR _+)$.  
Set
$$v=\vert\psi \vert^2\ .$$
Then $v\in C^\infty (\RR _+)\cap L^1(\RR _+)$ and $v(0)=0$. Plugging the differential equation satisfied by $\psi $, we get
$$v''=2Wv+2\vert \psi '\vert ^2\ge 0\ .$$
In other word, $v$ is a convex function. Since $v$ is integrable at infinity, this implies that $v$ is non increasing and tends to $0$ at infinity.
Since $v(0)=0$, we conclude $v=0$ and hence $\psi =0$.
\vskip 0.25cm \noindent
{\sl Step 2.} The case $s=1$. The domain of $\sqrt A$ is characterized as the subspace of vectors $u\in L^2(\RR _+)$ such that there exists a sequence $(u_n)$ 
of $D(A)$ which tends to $u$ in $L^2$ and which is a Cauchy sequence for the norm 
$$N_1(v):=\sqrt{(Av\vert v)}\simeq \Vert v'\, \Vert +\Vert \sqrt Wv\Vert \ .$$
This clearly implies that, if $u\in D(\sqrt A)$, then  $u\in H^1_0(\RR _+)$ and $\sqrt Wu \in L^2(\RR _+)$. 
Conversely, if $u\in H^1_0(\RR _+)$ and $\sqrt Wu \in L^2(\RR _+)$, a simple cutoff shows that $u$ can be approximated in the $N_1$ norm by
elements of $H^1_0(\RR _+)$ with bounded supports. Then the claim reduces to the standard characterization of $H^1_0(\RR _+)$ as the closure of $C^\infty _0((0,\infty ))$ 
for the $H^1$ norm. 
\vskip 0.25cm \noindent
{\sl Step 3}. The general case. We just prove the description of $D(A^{\frac s2})$, the corresponding equivalence of norms being proved in the same way, by keeping track the constants.
We proceed by induction on $s$. Let $s\ge 3$ such that the claim is proved for every $s'\le s-1$. Then $u\in D(A^{\frac s2})$ if and only if $u\in D(A)$ and $Au\in D(A^{\frac {s-2}2})$. 
Using the induction hypothesis, the latter condition is equivalent to the following two conditions :
\begin{itemize}
\item $W^{\frac{s-2-k}2}(Au)^{(k)} \in L^2$ for every $k\le s-2$. Expanding $(Au)^{(k)}$ and using (\ref{estderiv}), we observe that this is equivalent to 
$$W^{\frac{s-k}2}u^{(k)}- W^{\frac{s-2-k}2}u^{(k+2)}\in L^2\ ,\ k\le s-2\ ,$$
since the error terms are controlled by the fact that $u\in D(A^{\frac{s-1}2})$. In the special case $k=0$, using again (\ref{estderiv}) and $u\in D(A^{\frac{s-1}2})$,
we observe that this condition is equivalent to 
$$-v''+Wv\in L^2$$
for $v:=W^{\frac{s-2}2}u$. Since $u\in D(A)$, $v\in H^2(0,R)$ for every $R>0$ and $v(0)=0$. Moreover, since $u\in D(A^{\frac{s-1}2})$ and by the induction hypothesis,
$v\in H^1(\RR _+)$. Hence, by computing $(Au\vert v)$ for $u\in D(A)$,  we have $v\in D(A^*)$, which, by the first step, 
implies $Wv\in L^2$, or $W^{\frac s2}u \in L^2$. Combining with the other conditions for $k\le s-2$, we eventually obtain $W^{\frac{s-k}2}u^{(k)} \in L2$ for $k\le s$.
\item The boundary conditions
$$ \left (-\frac{d^2}{dx^2}+W\right )^p(Au)(0)=0\ ,\  0\le p\le \left[\frac {s-3}2\right] \ .$$
Since $Au=-u''+Wu$ and since $u(0)=0$ from $u\in D(A)$, this leads to the claimed boundary conditions at rank $s$.
\end{itemize}
The proof is complete.

\end{proof}
\end{appendix}

 \ms
 \bs
\ni
{\bf Acknowledgements.}
The authors were supported by the Agence Nationale de la Recherche, ANR project  EDP dispersives. 
F. M\'ehats was also supported by the ANR project QUATRAIN and by the INRIA project IPSO.


\begin{thebibliography}{10}
\nopagebreak
%
\bibitem{AG} S. Alinhac, P. G\'erard, Pseudo-differential operators and the Nash-Moser theorem. Graduate Studies in Mathematics, {\bf 82}. American Mathematical Society, Providence, RI, 2007.
\bibitem{banica} V. Banica, On the nonlinear Schr\"odinger dynamic on $S^2$. J. Math. Pures Appl. {\bf 83} (2004), 77-98.
\bibitem{bcfm}  N. Ben Abdallah, F. Castella, F. Delebecque-Fendt, F. M\'ehats, The strongly confined Schr\"odinger-Poisson system for the transport of electrons in a nanowire. SIAM J. Appl. Math. {\bf 69} (2009), no. 4, 1162-1173.
\bibitem{bcm} N. Ben Abdallah, F. Castella, F. M\'ehats, Time averaging for the strongly confined nonlinear Schr\"odinger equation, using almost periodicity. J. Diff. Eq. {\bf 245} (2008), no. 1, 154-200..
\bibitem{bmp} N. Ben Abdallah, F. M\'ehats, O. Pinaud, Adiabatic approximation of the Schr\"odinger-Poisson system with a partial confinement. SIAM J. Math. Anal.  {\bf 36} (2005), no. 3, 986--1013.
\bibitem{bony-chemin} J.-M. Bony, J.-Y. Chemin,
Espaces fonctionnels associ\'es au calcul de Weyl-H\"ormander. Bull. Soc. Math. France, Vol. 122, N. 1, 77--118 (1994).
\bibitem{bourgain} J. Bourgain, Fourier transform restriction phenomena for certain lattice subsets ans application to nonlinear equations I. Schr\"odinger equations. Geom. and Funct. Anal. {\bf 3} (1993), 107--156.
\bibitem{bm} F. Brezzi, P. A. Markowich, The three dimensional Wigner -Poisson Problem : existence, uniqueness and approximation. Math. Methods Appl. Sci. {\bf 14} (1991), no. 1, 35--61.
\bibitem{BGT1} N. Burq, P. G\'erard, N. Tzvetkov, Strichartz inequalities and the nonlinear Schrödinger equation on compact manifolds.  Amer. J. Math.  {\bf 126}  (2004),  no. 3, 569--605.
\bibitem{BGT2} N. Burq, P. G\'erard, N. Tzvetkov, N. Bilinear eigenfunction estimates and the nonlinear Schr\"odinger equation on surfaces.  Invent. math.  {\bf 159}  (2005), 187--223.
\bibitem{BGT3} N. Burq, P. G\'erard, N. Tzvetkov, N., Multilinear eigenfunction estimates and global existence for the three dimensional nonlinear Schrödinger equations.  Ann. Sci. École Norm. Sup. (4)  {\bf 38}  (2005),  no. 2, 255--301.
\bibitem{BGT4} N. Burq, P. G\'erard, N. Tzvetkov, N. The Cauchy problem for the nonlinear Schrödinger equation on compact manifolds.  Phase space analysis of partial differential equations. Vol. I,  21--52, Publ. Cent. Ric. Mat. Ennio Giorgi, Scuola Norm. Sup., Pisa, 2004. 
\bibitem{BGT5} N. Burq, P. G\'erard, N. Tzvetkov, An instability property of the nonlinear Schr\"odinger equation on $S^d$. Math. Res. Lett. {\bf 9} (2002), 323--335.
\bibitem{castella} F. Castella, $L^2$ solutions to the
Schr\"odinger-Poisson system: Existence, uniqueness, time
behaviour, and smoothing effects. Math. Models Methods Appl.
Sci., {\bf 7} (1997), 1051-1083.
\bibitem{dc} R. C. T. da Costa, Quantum mechanics for a constraint particle. Phys. Rev. A {\bf 23} (1981), no. 4, 1982--1987.
\bibitem{magnetic}  F. Delebecque-Fendt, F. M\'ehats, An effective mass theorem for the bidimensional electron gas in a strong magnetic field.  Comm. Math. Phys. {\bf 292} (2009), 829--870.
\bibitem{fh} R. Froese, I. Herbst, Realizing Holonomic Constraints in Classical and Quantum Mechanics. Comm. Math. Phys. {\bf 220} (2001), no. 3, 489--535.
\bibitem{GG} I. Gallagher, P. Gérard, Profile decomposition for the wave equation outside a convex obstacle.  J. Math. Pures Appl. (9)  {\bf 80}  (2001),  no. 1, 1--49. 
\bibitem{gerard} P. G\'erard, Nonlinear Schr\"odinger equations on compact manifolds. In European Congress of Mathematics (Stockholm, 2004), ed. by Ari Laptev, EMS Publishing House, Z\"urich, 2005, 121--139.
\bibitem{GP} P. G\'erard, V. Pierfelice, Nonlinear Schr\"odinger equation on four-dimensional compact manifolds.  Bull. Soc. Math. France {\bf 138} (2010), no. 1, 119--151.
\bibitem{ginibre} J. Ginibre, Le probl\`eme de Cauchy pour des EDP semi-lin\'eaires p\'eriodiques en variable d'espace (d'apr\`es Bourgain). S\'eminaire Bourbaki, exp. 796, Ast\'erisque {\bf 237} (1996), 163--187. 
\bibitem{helffer-nier}B. Helffer, F. Nier, Hypoelliptic estimates and spectral theory for Fokker-Planck operators and
Witten Lalacians. Lecture Notes in Mathematics, Vol. 1862. Springer-Verlag, Berlin (2005).
\bibitem{horm} L. H\"ormander, The analysis of linear partial differential operators. III. Pseudodifferential operators. Chapter XVIII. Grundlehren der Mathematischen Wissenschaften [Fundamental Principles of Mathematical Sciences], 274. Springer-Verlag, Berlin, 1985.
\bibitem{ilz} R. Illner,  P. F. Zweifel, H. Lange, Global Existence, Uniqueness and Asymptotic Behaviour of Solutions of the Wigner-Poisson and Schr\"odinger-Poisson Systems. Math. Methods Appl. Sci. {\bf 17} (1994), no. 5, 349--376.
\bibitem{poincare} H. Poincar\'e, La m\'ethode de Neumann et le probl\`eme de Dirichlet. Acta Math. {\bf 20} (1897), no. 1, 59--142. 
\bibitem{schlag} W. Schlag, Harmonic analysis notes, available on {\tt http://math.uchicago.edu/$\sim$schlag/}.
\bibitem{sogge} C. Sogge, Fourier integrals in classical analysis. Cambridge Tracts in Mathematics, 1993.
\bibitem{teufel} S. Teufel, Adiabatic Perturbation Theory in Quantum Dynamics. Lecture Notes in Mathematics 1821, Springer-Verlag, Berlin, Heidelberg, New York, 2003.
\end{thebibliography}
\end{document}